\documentclass[runningheads]{llncs}
\usepackage[T1]{fontenc}
\usepackage{graphicx}
\usepackage[numbers]{natbib}
\usepackage{amssymb}
\usepackage{amsmath}
\usepackage{amsfonts}
\usepackage{mathtools}
\usepackage{algpseudocode}
\usepackage{tikz}

\usepackage{booktabs} 
\usepackage[linesnumbered,ruled]{algorithm2e} 

\usepackage{tikz}
\usetikzlibrary{decorations.pathmorphing} 
\usetikzlibrary{arrows.meta} 
\usetikzlibrary{patterns}
\usepackage{subcaption} %
\usepackage{amsmath} 
\usepackage{mathtools}
\usepackage{bbm}
\usepackage[dvipsnames]{xcolor}

\newcommand{\redcomment}[1]{\textcolor{black}{\textrm{#1}}}
\newcommand{\ma}[1]{\textcolor{black}{\textrm{#1}}}

\newcommand{\mathitsc}[1]{\text{\normalfont\itshape\scshape #1}}

\newcommand{\purplecomment}[1]{\textcolor{black}{\textrm{#1}}}

\SetAlFnt{\small}
\SetAlCapFnt{\small}
\SetAlCapNameFnt{\small}
\SetAlCapHSkip{0pt}
\IncMargin{-\parindent}

\usepackage[a4paper, left=1in, right=1in, top=1in, bottom=1in]{geometry}
\usepackage{graphicx}
\usepackage{float}
\begin{document}
\title{Network Topology That Excludes Braess’s paradox and Maintains Monotonicity in Flows Over Time\thanks{This work was supported by the National Natural Science Foundation of China (Nos. 72192804, 12671378, 12331014)}}
%
%
\author{Xujin Chen\inst{1,}\inst{2},  \ Xiyuan Deng \inst{1,}\inst{2}, \  Changjun Wang\inst{1}}
\institute{SKLMS, Academy of Mathematics and Systems Science, Chinese Academy of Sciences, Beijing, China \and School of Mathematical Sciences, University of Chinese Academy of Sciences, Beijing, China\\ \email{\{xchen, dengxiyuan, wcj\}@amss.ac.cn} }

\authorrunning{X. Chen et al.}
%
\maketitle              

\begin{abstract}

In the game of \emph{flow over time},  infinitesimal flow particles aim to travel from a source to a sink in a network as quickly as possible. \ma{Under} the Vickrey bottleneck model, the congestion effects on network edges are captured through FIFO queues, which arise when the inflow into an edge exceeds its capacity. This work addresses two open conjectures \ma{about} 
the game: 1) the characterization of networks that are immune to Braess's paradox, and 2) the monotonicity relationship between the network  
inflow rate and the overall flow makespan. We show that a single class of network topologies, \ma{called} \emph{chains of bipolar pseudo-arborescences} (\emph{BPAs}), \ma{fully resolves the first conjecture and yields partial progress on the second.}

 \hspace{3mm} Chains of BPAs constitute precisely \ma{the cases left open by Macko et al.~(2013) in their study of} Braess’s paradox for flow over time. By structurally characterizing the dynamic evolution of Nash flows over time on such networks, we prove that \redcomment{removing edges from these networks \ma{never decreases} the maximum equilibrium latency.} Combined with the results of \ma{Macko et al., this establishes} a necessary and sufficient condition: \emph{a network does not admit Braess’s paradox for flow over time if and only if it is a chain of BPAs}, thereby resolving their conjecture. Furthermore, we confirm the monotonicity conjecture \ma{of Correa et al.~(2021)} for all chains of BPAs. This result strictly generalizes the previously known monotonicity for chains of parallel paths \ma{under} uniform inflows. 



\end{abstract}





\section{Introduction}

Modern transportation and communication networks operate under constantly changing conditions, where congestion and travel latency fluctuate over time. In such environments, the decisions of individual agents influence not only their own travel but also the overall performance of the flow. Traditional static models fail to capture these temporal effects, ignoring queue formation and delays that develop as traffic moves through the network. Dynamic flow (or flows over time) addresses this limitation by modeling the movement of individual flow particles across the network over time, providing a framework to study the efficiency of decentralized routing and the resulting equilibrium behavior under time-dependent congestion.

Dynamic flow models were first studied by \citet{Ford1958,Ford1962} from an optimization perspective. Their work focused on computing a flow over time that maximizes the total amount sent from a source to a sink within a given time horizon. Since then, the literature on dynamic flows has expanded substantially. We refer to the survey by \citet{Skutella2009} for a comprehensive overview of subsequent developments. The study of dynamic flow games dates back to the seminal work of \citet{Vickrey1969}, which introduced \ma{what is now known as the \emph{Vickrey bottleneck model}}. In this model, each link is characterized by a capacity and a fixed transit time. As long as the inflow rate into a link does not exceed its capacity, the traversal time equals the transit time. When the inflow rate exceeds capacity over some period, however, a queue forms at the entrance of the link, and users experience a delay equal to the transit time plus the waiting time in the queue.  \ma{Since queues are induced by the collective routing decisions of all users, each user's travel time depends on the choices of others, which leads to a game in which users selfishly choose routes to minimize their own travel times. The Vickrey model has since become one of the most influential models of traffic congestion and has received considerable attention in the transportation science literature \citep{Yagar1971,Peeta2001}. However, a rigorous mathematical understanding of its equilibria has emerged only relatively recently.} 

\ma{The equilibrium concept in this setting is that of a} \emph{Nash flows over time} or \emph{equilibrium flows}. Over the past two decades, considerable effort has been devoted to understanding the structural and computational properties of Nash flows \citep{Koch2011,Macko2013,Cominetti2015,Sering2021,Cominetti2022}. \citet{Koch2011} provided an elegant characterization of the time derivatives of Nash flows under uniform inflow rates. Building on this characterization, \citet{Cominetti2015} \redcomment{established existence constructively for piecewise-constant inflow rates, together with uniqueness of earliest-arrival labels among right-continuous equilibria.} More recently, \citet{Cominetti2022} showed that, under a natural and necessary uniform inflow condition, Nash flows reach a steady state in finite time. \ma{Equilibria have also been studied in a variety of extensions and variants of the basic model} 
\citep{Sering2019nash,Scarsini2018,Graf2020,Cao2021,Graf2023}. Despite extensive study, many fundamental questions remain open, and a number of seemingly obvious properties have turned out to be surprisingly hard to establish.

\ma{A prominent example is \emph{Braess's paradox} \citep{Braess1968,Braess2005}, the counterintuitive phenomenon whereby adding resources to a network, such as new links or additional capacity, can worsen system performance at equilibrium. In its classic form, the paradox arises in static selfish routing, where adding a new link may lead users to reroute in a way that increases the latency experienced by every user \citep{Kameda2002,Lin2004,Roughgarden2006}. The paradox carries over to flows over time, where it exhibits markedly different behavior.}
\citet{Macko2013} showed that there exist networks that do not admit Braess’s paradox in static flow games but do admit it under the dynamic flow model, and further \ma{showed that, unlike in the static case, Braess’s paradox is not symmetric for flows over time---a network may admit the paradox while its reverse does not}. They also conjectured a necessary and sufficient condition \ma{on the network topology for the occurrence} of Braess’s paradox. To the best of our knowledge, the validity of this conjecture remains open.

\ma{Braess's paradox concerns how equilibria respond to changes in the network itself. An equally natural question is how they respond to changes in the inflow. Intuitively, sending the same amount of flow into the network at a higher rate should never delay its arrival at the sink. In light of Braess's paradox, however, such intuition cannot be taken for granted in dynamic flow games. This question lies at the heart of the \emph{monotonicity conjecture} proposed by \citet{Correa2021}, which states that, for any given instance, the time needed for a given amount of flow to reach the sink, i.e., the \emph{makespan}, in a Nash flow over time is non-increasing in the uniform inflow rate. This conjecture arose in the study of the \emph{price of anarchy} (PoA) for flows over time. The PoA is a standard measure of the inefficiency caused by selfish behavior, defined as the worst-case ratio between the performance (e.g., makespan or throughput) of a Nash equilibrium and that of an optimal solution \citep{koch2012routing,BHASKAR2015,Correa2021}. In the makespan setting, \citet{Correa2021}  showed that if the monotonicity conjecture holds, the PoA is exactly $e/(e-1)$, and they verified the conjecture for chains of parallel paths. Despite its intuitive appeal and supporting computational evidence, the monotonicity conjecture remains open in general.}

\paragraph{Our contribution.} In this work, we address the above two open conjectures \ma{\cite{Macko2013,Correa2021} by analyzing how Nash flows over time evolve on a specific class of network topologies}. A single-source single-sink graph is called a \emph{bipolar pseudo-arborescence} (\emph{BPA}) if every vertex other than the source and the sink has out-degree exactly one. A graph is called a \emph{chain of BPAs} if it is \ma{a BPA or obtained by sequentially connecting two or more BPAs} in series. We show that chains of BPAs are exactly \ma{the network topologies conjectured by \citet{Macko2013} to be necessary and sufficient for immunity to Braess's paradox. This topological class strictly generalizes chains of parallel paths, and serves as the fundamental structure needed to establish the monotonicity conjecture.}  

To prove that chains of BPAs do not admit Braess’s paradox, we first establish a series of structural results for a broader class of networks, namely series–parallel networks. \ma{Specifically, we} construct an iterative \emph{shortest-path flow decomposition} of such networks, which yields several structural characterizations and properties. \ma{\citet{BEIN1985} previously developed a greedy augmentation scheme that computes minimum-cost flows in series--parallel networks using only forward augmenting paths. Our decomposition, which also employs a forward-only augmentation scheme, serves a different purpose---rather than computing an optimal flow, it yields the structural characterizations needed for our subsequent analysis.}
We then study \ma{how a Nash flow over time evolves} on a single BPA, and show that the dynamically evolving shortest-path network coincides, at critical time points, with the subnetworks identified by our decomposition. These characterizations allow us to fully describe the flow dynamics and the resulting arrival flow rates on a single BPA. Building on this analysis, we derive the maximum travel latency of the Nash flow on chains of BPAs and show that Braess’s paradox cannot occur in these networks, thereby resolving the conjecture by \citet{Macko2013}. 

\ma{Building} on the full characterization of the flow dynamics and the resulting arrival flows on a single BPA, \ma{we establish a {dominance-preservation} property that}, for a fixed BPA \ma{network}, if one inflow dominates another pointwise in rate, then the corresponding arrival flow also dominates the other arrival flow pointwise in rate. We then extend this \ma{property} to chains of BPAs. 
\redcomment{This confirms the original constant-inflow monotonicity conjecture \ma{of \citet{Correa2021} for chains of BPAs, generalizing their result for chains of parallel paths. Moreover, our comparison is not limited to constant inflow rates; it extends} to nondecreasing, piecewise-constant inflows on their injection intervals. Here dominance is required only until the dominating inflow ends, as defined in Section~\ref{sec:mono}; it is not pointwise dominance on the entire time axis for two equal-volume inflows.}

\ma{In summary, we settle the topological conjecture of \citet{Macko2013} in full and make substantial partial progress on the monotonicity conjecture of \citet{Correa2021}.}


\paragraph{Paper organization.} 
The remainder of the paper is structured as follows. Section \ref{sec:pre} introduces the necessary preliminaries, including the flow over time model, Nash flows, Braess’s paradox, and the corresponding conjecture. \ma{Section~\ref{sec:topo} formally defines chains of BPAs and constructs the shortest-path flow decomposition of series-parallel networks, from which we derive the structural characterization needed later.} 
Section \ref{sec:nash} analyzes the dynamics of Nash flows on chains of BPAs and resolves the conjecture regarding Braess’s paradox. Section \ref{sec:mono} establishes the monotonicity conjecture for chains of BPAs. Section~\ref{sec:con} concludes the paper. 


\section{Preliminaries}\label{sec:pre}
All graphs studied in this paper are \emph{directed} unless otherwise noted.
\subsection{Flow Model}\label{sub:flow}
The network is modeled by a graph $G=(V,E)$ with vertex set $V=V(G)$ and edge set $E=E(G)$, where each edge $e\in E$ is associated with a positive capacity ${c}_e$ and a nonnegative length (a.k.a.\ transit time) $\tau_e$. \purplecomment{For any edge $e\in E$, we often use $\nu(e)$ and $\omega(e)$ to denote its tail and head vertices, respectively.} Let  $s,t\in V$ be the source and sink of $G$, respectively, such that every vertex in $G$ is reachable from $s$, and $t$ is reachable from every vertex of $G$. We often call $G$ an $s$-$t$ graph or a single-source single-sink graph. The flow enters $G$ at the source $s$ at a constant rate $\mu$\purplecomment{, starting at time $0$,} and travels from $s$ to the sink~$t$. We use $(G,s,t,c, \tau,\mu)$ to denote an instance of the dynamic \ma{flow game}. 
Throughout the paper, $G$ is assumed to be \emph{irredundant}, meaning that each edge (and hence each vertex) lies on at least an $s$-$t$ path. \ma{For each vertex  $v \in V$, let $\delta^-_G(v)$ and $\delta^+_G(v)$ denote the sets of edges entering and leaving $v$, respectively. If $G$ is clear from the context, we write $\delta^-(v)$ and $\delta^+(v)$ instead of $\delta^-_G(v)$ and $\delta^+_G(v)$.}

\begin{figure}[h]
        \centering
        \begin{tikzpicture}[scale=1]
    \draw[pattern=north east lines] (1,0) rectangle (4,0.8);
    
    \node at (2.5,0.4) {$q_e$};

    \draw (0,0) -- (1,0);
    \draw (0,0.8) -- (1,0.8);
    \draw[->][blue, ultra thick]  (4,0.4) -- (7,0.4) node[midway, below] {$e$}; 
    \draw[decorate, decoration={snake, amplitude=2pt, segment length=8pt}] (-0.1,0.4) -- (0.6,0.4);
    \draw[->] (0.6,0.4) -- (1,0.4);
    \node[left, inner sep=1pt] at (-0.3,0.3) {$f_e^+(\theta)$};

    \draw (7,0.4) -- (7.3,0.4);
    \draw[decorate, decoration={snake, amplitude=1.5pt, segment length=6pt}, ->] 
        (7.3,0.4) -- (7.8,0.4);
    \node[right, inner sep=1pt] at (8,0.3) {$f_e^-(\theta)$};
    
\end{tikzpicture}
        \caption{flow into an edge}
        \label{fig:edge}
    \end{figure}

\ma{Initially (before time 0), the network has neither queues nor flow.} The flow over time, \ma{occurring since time 0}, can be described by a family of nonnegative and locally integrable functions \purplecomment{$f:= \{f_e^+, f_e^-\mid e \in E\}$}, where, given time (point) $\theta\ge0$, $f_e^+(\theta)$ and $f_e^-(\theta)$ denote the inflow rate and outflow rate into edge  $e$ at time $\theta$, respectively. 
\ma{The flow must satisfy the conservation constraint that, for almost every $\theta\ge0$, there holds}
\begin{equation*}
    \sum_{e \in \delta^+(v)} f_e^+(\theta) - \sum_{e \in \delta^-(v)} f_e^-(\theta) =
    \begin{cases}
        \mu & \text{if } v = s, \\
        0 & \text{if } v \ne s, t.
    \end{cases}
\end{equation*}

\ma{For any edge $e$ and time $\theta$, the total amount of flow that has entered and exited edge $e$ by time $\theta$ are the
cumulative inflow $F_e^+(\theta) $ and cumulative outflow $F_e^-(\theta) $, given by}
\[
F_e^+(\theta) := \int_0^\theta f_e^+(\vartheta) \, \mathrm{d}\vartheta \; \text{ and } \; F_e^-(\theta) := \int_0^\theta f_e^-(\vartheta) \, \mathrm{d}\vartheta,
\]
 respectively. Flow dynamics are modeled using the deterministic fluid queuing framework. In case the inflow rate $f_e^+(\theta)$ exceeds the edge capacity ${c}_e$, a queue grows at the tail of the edge at rate $f_e^+(\theta)-{c}_e$. The queue mass at time $\theta$, denoted by $z_e(\theta)$, \purplecomment{is the amount of flow that has entered edge $e$ but has not yet left its queue, i.e.,
\[
z_e(\theta):=F_e^+(\theta)-F_e^-(\theta+\tau_e).
\]}
If $f_e^+(\theta)<{c}_e$, \ma{$z_e(\theta)$} decreases at a rate ${c}_e-f_e^+(\theta)$ until 
\ma{it} becomes 0 or the inflow rate changes. \ma{Specifically, $z_e(\theta)$ follows the following deterministic queueing rule:
}
\[
\frac{\mathrm{d}z_e(\theta)}{\mathrm{d}\theta}=\begin{cases}
f_e^+(\theta)-{c}_e & \text{if } z_e(\theta) > 0, \\
\max \{f_e^+(\theta)-{c}_e,0\} & \text{if } z_e(\theta) = 0.
\end{cases}
\]
At time $\theta$, the flow
exits the queue and leaves $e$'s tail at rate ${c}_e$ if $z_e(\theta)>0$ and the flow leaves $e$'s tail at rate $\min\{f^+_e(\theta),{c}_e\}$ otherwise. As soon as the flow particle leaves $e$'s tail, it traverses $e$, from its tail to its head, in $\tau_e$ units of time. \ma{More precisely, the outflow satifies the rule:}
\[
f_e^-(\theta+\tau_e)=\begin{cases}
{c}_e & \text{if } z_e(\theta) > 0, \\
\min \{f_e^+(\theta), {c}_e\} & \text{if } z_e(\theta) = 0.
\end{cases}
\]
\ma{In particular,}
\[
f_e^-(\theta) \leq {c}_e\quad  \text{for all } e \in E \;\text{and } \theta\ge 0.
\]
Each flow particle entering edge $e$ at time $\theta$ thus experiences a delay of $q_e(\theta) = z_e(\theta)/{c}_e$ and then traverses the edge in time $\tau_e$, resulting in a final time of exiting $e$, i.e., $T_e(\theta) := \theta + q_e(\theta) + \tau_e$.
\purplecomment{Moreover, the queue dynamics imply that $T_e$ is nondecreasing. Hence, the FIFO (First-In-First-Out) property is respected on each edge, meaning that flow particles exit in the same order they enter: no particle can overtake another on the same edge.}

\subsection{The Nash Flow and Braess's Paradox}\label{sub:nashbp}

 In a Nash flow over time, the flow consists of infinitely many independent players (flow particles), each representing an infinitesimal unit of traffic. Each infinitesimal “particle” of traffic independently chooses its route so as to minimize its own arrival time at the sink $t$, given the evolving state of the network. A \purplecomment{\emph{Nash flow}} is a flow pattern \purplecomment{such that} no individual particle can reduce its arrival time at $t$ by unilaterally changing its route, assuming all other particles maintain their current paths. 
 
 \subsubsection{The Nash Flow}
 
In our definitions, we follow the refined notion of Nash flows from \cite{Cominetti2015}. For $v\in V$, let $\mathcal{P}_{s,v}$ denote the set of all $s$-$v$ paths in $G$. 
For \ma{any $\theta\ge0$}, let $l_v(\theta)$ denote the earliest possible arrival time at vertex $v$ for a flow particle entering $G$ from the source $s$ at time $\theta$. Then, for any edge $e = vw \in E$, the label functions $l_v(\cdot)$ satisfy
\begin{equation*}
    \begin{aligned}
        &l_s(\theta)=\theta,\\
        &l_w(\theta)=\min\{T_{e}(l_v(\theta))\mid e=vw\in \delta^-(w)\} \text{ for all } w\in V\setminus\{s\}.
    \end{aligned}
\end{equation*}
 \ma{A} path of consecutive edges \ma{$e$} attaining these minima \ma{is called a} \emph{dynamic shortest path} for particles entering the source at time $\theta$. We define the set of active edges for entering time $\theta$ as 
 $E'_{\theta} := \{ e=vw \in E \mid l_w(\theta) =T_{e}( l_v(\theta)) \}$, 
 \ma{consisting of} the edges that lie on dynamic shortest paths for entering time $\theta$. The set of edges with nonempty queues for entering time $\theta$ is defined as $E^*_{\theta} := \{  e=vw \in E \mid z_{e}(l_v(\theta)) >0 \}$.
 Let $G_\theta=(V, E_\theta')$ denote the dynamic shortest path network w.r.t. time $\theta$. \redcomment{Following \citet[Definition~1]{Cominetti2015}, a feasible flow over time is a \emph{Nash flow} if, for each $e=uv\in E$, we have $f_e^+(\vartheta)=0$ for almost every \ma{$\vartheta\in\{l_u(\theta')\mid \theta'\not\in\{\theta\mid e\in E'_\theta\}\}$. In other words,} 
 The label in this condition is that of the tail $u$: flow may enter an edge only while it is active, up to a null set of physical entry times.}
 \redcomment{By \citet[Proposition~2]{Cominetti2015}, the active and queued edge sets of a Nash flow also satisfy:} 
\begin{equation*}
    \begin{aligned}
        E'_{\theta} = \{ e=vw \in E \mid l_w(\theta) \ge l_v(\theta) + \tau_{e} \},\\
        E^*_{\theta} = \{ e=vw \in E \mid l_w(\theta) > l_v(\theta) + \tau_{e} \}.
    \end{aligned}
\end{equation*}

\redcomment{For piecewise-constant inflows, a right-constant dynamic equilibrium exists~\cite[Theorem~5]{Cominetti2015}. Moreover, the earliest-arrival label functions $(l_v)_{v\in V}$ are identical for all right-continuous dynamic equilibria~\cite[Theorem~6]{Cominetti2015}. For constant inflow, \citet{Olver2026} establish label uniqueness without the right-continuity assumption.}

\redcomment{Accordingly, throughout the nonuniform-inflow extensions we work with right-continuous equilibria and piecewise-constant inflows with locally finitely many changes. Nondecreasing always refers to the injection interval; the rate is zero before injection starts and after it ends. Rate identities are understood almost everywhere. These conventions include constant inflows, and do not assert uniqueness for arbitrary time-varying inflows.}

\redcomment{Following \citet[Definition~2 and Theorem~1]{Cominetti2015},} for a Nash flow $f$ and each edge $e = vw \in E$, define the cumulative inflow on edge $e$ up to time $l_v(\theta)$ as $x_e(\theta) := F_e^+(l_v(\theta))$, which has derivative
\[
x'_e(\theta) = \frac{\mathrm{d}x_e(\theta)}{\mathrm{d}\theta} = f_e^+(l_v(\theta)) \cdot l'_v(\theta).
\]
Actually, the vector $\mathbf{x'}(\theta) = (x'_e(\theta))_{e \in E}$ represents a static $s$-$t$ flow of value $\mu$, namely
\begin{equation}\label{char:static}
\sum_{e\in\delta^+(v)}x_e'(\theta)-\sum_{e\in\delta^-(v)}x_e'(\theta)=\begin{cases}
\mu & \text{if } v=s, \\
-\mu  & \text{if } v=t,\\
0  & \text{if } v\neq s,t.
\end{cases}   
\end{equation}
\redcomment{The following derivative relations are part of the normalized thin-flow characterization of Nash flows \cite[Theorem~2]{Cominetti2015}, extending the constant-inflow characterization of \citet{Koch2011}.}

\begin{theorem}\cite{Koch2011,Cominetti2015}\label{thm:eq}
     Consider a Nash flow $f$ and a time $\theta $ such that $x'_e(\theta)$ and $l'_v(\theta)$ exist for all $e\in E$ and $v\in V$. Then the static flow $\mathbf{x}'(\theta)$ satisfies:
    \begin{equation*}
        \begin{aligned}
            &l'_w(\theta)\le l'_v(\theta) &&\text{ for } e=vw\in E'_{\theta}\setminus E^*_{\theta} \text{ with } x'_{e}(\theta)=0;\\
            &l'_w(\theta)=\max \left\{l'_v(\theta), \frac{x'_e(\theta)}{c_e} \right\} &&\text{ for } e=vw\in E'_{\theta}\setminus E^*_{\theta}\text{ with }x'_{e}(\theta)>0;\\
            &l'_w(\theta)= \frac{x'_e(\theta)}{c_e}  &&\text{ for } e=vw\in  E^*_{\theta}.
        \end{aligned}
    \end{equation*}
\end{theorem}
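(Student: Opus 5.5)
We derive the three relations edge by edge from the link between the tail label and the head label through the exit-time function, $T_e(l_v(\theta)) = l_v(\theta)+q_e(l_v(\theta))+\tau_e$. Throughout we use the characterization of $E'_\theta$ and $E^*_\theta$ from \cite[Proposition~2]{Cominetti2015} recalled above. We also fix a time $\theta$ at which all $x'_e$ and $l'_v$ exist, and at which $\theta$ is a Lebesgue point of the relevant rate functions, which holds almost everywhere.

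\textbf{Step 1: the queued-edge formula.} Let $e=vw\in E^*_\theta$. By continuity of the labels, $z_e(l_v(\vartheta))>0$ for $\vartheta$ in a neighbourhood of $\theta$. Hence on that neighbourhood the outflow of $e$ equals $c_e$, and $e$ remains active, so FIFO gives
\[ F_e^+(l_v(\vartheta)) = F_e^-(T_e(l_v(\vartheta))). \]
The head label is $l_w(\vartheta)=T_e(l_v(\vartheta))$ because $e$ stays active (this is part of the strict-inequality characterization of $E^*$). Because the queue is busy throughout, $F_e^-$ has slope $c_e$ at $l_w(\theta)$. Differentiating $x_e(\vartheta)=F_e^-(l_w(\vartheta))$ at $\theta$ then yields $x'_e(\theta)=c_e\,l'_w(\theta)$, which is the third relation.

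\textbf{Step 2: active edges without a queue.} Let $e=vw\in E'_\theta\setminus E^*_\theta$, so that $l_w(\theta)=l_v(\theta)+\tau_e$. For all $\vartheta$ we have $l_w(\vartheta)\le T_e(l_v(\vartheta))$, with equality at $\theta$. Since $z_e(l_v(\theta))=0$, the queue on $e$ at times $l_v(\vartheta)$ for $\vartheta$ slightly larger than $\theta$ has size $z_e(l_v(\vartheta))=\bigl(\int (f_e^+ - c_e)\bigr)^+$ to first order. In derivative terms this gives
\[ \tfrac{d}{d\vartheta}q_e(l_v(\vartheta))\big|_{\theta}=\max\{0,\,x'_e(\theta)/c_e-l'_v(\theta)\}, \]
using $x'_e=f_e^+(l_v)l'_v$. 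Therefore the right derivative of $T_e\circ l_v$ at $\theta$ equals $\max\{l'_v(\theta),x'_e(\theta)/c_e\}$. Comparing derivatives of $l_w\le T_e\circ l_v$ at a touching point gives $l'_w(\theta)\le \max\{l'_v,x'_e/c_e\}$.

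\textbf{Step 3: the reverse inequalities.} In the case $x'_e(\theta)=0$ the bound from Step 2 is already the first relation. If $x'_e(\theta)>0$, then flow enters $e$ at positive rate around $l_v(\theta)$. By the Nash condition, $e$ is active for a set of $\vartheta$ of positive density near $\theta$, so $l_w=T_e\circ l_v$ on that set. Since both derivatives exist, we get $l'_w(\theta)=\frac{d}{d\vartheta}T_e(l_v(\vartheta))|_\theta=\max\{l'_v,x'_e/c_e\}$.

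\textbf{Main obstacle.} The main obstacle is the derivative computation of $q_e\circ l_v$ at a point where the queue is empty. It requires care with one-sided derivatives and measure-zero exceptions, which is why we choose $\theta$ to be a Lebesgue point. The density argument in Step 3, turning "flow enters $e$" into "$e$ is active on a dense set", also needs measure-theoretic care.
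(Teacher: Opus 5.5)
The paper gives no proof of this theorem; it quotes it from \cite{Koch2011,Cominetti2015}. Your argument is correct and reconstructs the standard derivation: $l_w\le T_e\circ l_v$ everywhere, with equality at $\theta$ for $e\in E'_\theta$; a one-sided derivative of the queue where the queue is empty; and the Nash condition to upgrade the inequality to an equality on active edges that carry flow. Three steps should be tightened, though none is fatal.

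\emph{Step 2 and the Lebesgue-point assumption.} Drop the Lebesgue-point assumption. It narrows your conclusion to almost every $\theta$, whereas the statement holds at every $\theta$ where the derivatives exist, and Step 2 does not need it. Since $z_e(l_v(\theta))=0$ and $l_v$ is continuous and nondecreasing, the reflection formula gives, for $\vartheta>\theta$,
\[
z_e(l_v(\vartheta))=\sup_{\vartheta_0\in[\theta,\vartheta]}\Bigl[x_e(\vartheta)-x_e(\vartheta_0)-c_e\bigl(l_v(\vartheta)-l_v(\vartheta_0)\bigr)\Bigr].
\]
Differentiability of $x_e$ and $l_v$ at $\theta$ alone then yields the right derivative $\max\{0,\,x'_e(\theta)-c_e l'_v(\theta)\}$. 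Your heuristic ``$(\int(f_e^+-c_e))^+$ to first order'' is justified only at a Lebesgue point.

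\emph{Step 1.} A positive queue at entry times near $l_v(\theta)$ does not by itself control the outflow near $l_w(\theta)=l_v(\theta)+\tau_e+q_e(l_v(\theta))$. Use instead that the queue decreases at rate at most $c_e$. Hence $z_e(\phi)>0$ implies $f_e^-=c_e$ on $[\phi+\tau_e,T_e(\phi))$. These windows, for $\phi$ between $l_v(\theta)$ and $l_v(\vartheta)$, cover the interval between $l_w(\theta)$ and $l_w(\vartheta)$. This gives the exact identity $x_e(\vartheta)-x_e(\theta)=c_e\bigl(l_w(\vartheta)-l_w(\theta)\bigr)$ near $\theta$. The identity also covers the case where $T_e\circ l_v$ is constant on one side, so that $F_e^-$ need not be differentiable at $l_w(\theta)$.

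\emph{Step 3.} You only have the right derivative of $T_e\circ l_v$, not a two-sided one. You also only need active times accumulating at $\theta$ from the right. This holds because otherwise the Nash condition would make $x_e$ constant on some $[\theta,\theta+\delta)$, contradicting $x'_e(\theta)>0$. Evaluating $l_w=T_e\circ l_v$ along such a sequence then identifies $l'_w(\theta)$ with $\max\{l'_v(\theta),x'_e(\theta)/c_e\}$. Your ``positive density'' claim would additionally require a Lipschitz bound on $x_e$, which you have not established and do not need.
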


Furthermore, for any $v\in V$, the derivative function $l'_v(\cdot)$ is unique across Nash flows ~\citep{Cominetti2015}, from which it follows that the arrival time functions $l_v(\theta)$ of the Nash flow are uniquely determined. 

\subsubsection{Braess's Paradox}
Following Macko et al.~\cite{Macko2013}, we examine Braess's paradox with respect to the maximum latency experienced by any flow particle in a Nash flow.  
For an instance $(G,s,t,{c}, \tau,\mu)$, the maximum latency cost of \purplecomment{any} Nash flow $f$ is defined as 
\[
SC(G,s,t,{c}, \tau,\mu):=\sup\{l_t(\theta)-\theta\mid \theta\ge 0\}.
\]
From the uniqueness of the arrival time functions $l_v(\theta)$, the above $SC$ of the Nash flows is well-defined and is exactly the social cost of the Nash flow $f$ defined by \citet{Macko2013}.
    For any subgraph $H$ of $G$ that still contains $s$ and $t$,  we slightly abuse notation by writing $(H, s, t, c,\tau,\mu)$ for the dynamic routing game instance obtained by restricting $(G,s,t,{c}, \tau,\mu)$ to $H$; that is, each edge of $H$  inherits the same capacity and transit time as in $G$.

\begin{definition}
    An instance $  (G, s, t, {c}, \tau, \mu)$ is said to admit \emph{Braess's Paradox} (\emph{BP})  if there exists an $s$-$t$ subgraph $H$ of $G$ such that the maximum latency cost of the Nash flow in the restricted instance is strictly lower, i.e.,
\(SC(H,s,t,{c}, \tau,\mu) < SC(G,s,t,{c}, \tau,\mu)\). An $s$-$t$ graph $G$ is said to admit \emph{BP} if there exist $c,\tau\in\mathbb{R}_+^E$ and $\mu>0$ such that  instance $  (G, s, t, {c}, \tau, \mu)$  admits BP.
\end{definition}
In other words, Braess’s paradox occurs when removing edges from the network leads to a \textit{strictly better} equilibrium outcome in terms of overall system performance.  This counterintuitive phenomenon illustrates that the presence of additional resources, e.g., network edges, can lead to less efficient equilibria under selfish routing.

The above definitions of Nash flow and Braess's paradox can be extended to the general scenario  $(G, s, t, {c}, \tau, \mu(\theta))$ with dynamic inflow rate $\mu(\theta)$, which may varies with time $\theta$, 
and define Braess's Paradox analogously for such time-dependent instances.


   A set of edges $ F$ in $G=(V,E)$ is called an $s$-$t$ cut if $V$ can be partitioned to two sets $S$ and $T$ such that $s\in S$, $t\in T$ and $F=\{uv\in E\mid u\in S\text{ and }v\in T\}$. The capacity of $F$ is defined as the total capacity  $\sum_{e\in F}c_e$ of its edges.
Let $c(G)$ denote the minimum capacity of an $s$-$t$ cut in $G$. In this paper, we only consider the case where $\mu\le {c}(G)$. Note that if $\mu>{c}(G)$, the maximum latency cost $SC(H,s,t,{c}, \tau,\mu)$  is infinite for all $s$-$t$ subgraphs $H$ of $G$, which excludes the presence of any BP.

\subsection{Conjecture}\label{conjecture}\label{sub:conj}

\citet{Macko2013} conjectured a necessary and sufficient condition for the occurrence of Braess’s paradox \purplecomment{(Conjecture 5.2)}. In this paper, we confirm the conjecture via rigorous proofs. 
Let $G$ and $H$ be two graphs. The operation of subdividing an edge $uv$ in $H$ once replaces $uv$ with a new vertex $w$ and two edges $uw$ and $wv$. A \emph{subdivision} of $H$ is the graph obtained from $H$ by sequentially subdividing edges (possibly none). We say that $G$ contains $H$ as a  \emph{topological minor} if $G$ has a subgraph isomorphic to a subdivision of $H$. 
A chain of parallel edges is a network obtained from an $s$-$t$ path by adding parallel edges (possibly none). A \emph{chain of parallel paths} is a subdivision of a chain of parallel edges. \purplecomment{The two parts of the following theorem are taken from \cite[Theorem~5.1 and Theorem~5.5]{Macko2013}, respectively.}
\begin{theorem}\cite{Macko2013}\label{thm:sufficient} 
Let $G$ be a single-source single-sink graph, and let $M_1,M_2,M_3,M_4$ be as depicted in Figure~\ref{fig:forbidden}. \redcomment{For a fixed positive constant inflow rate,}
\begin{enumerate}
    \item if $G$ contains at least one of $M_1, M_2, M_3$ as a topological minor, then $G$ admits BP.
    \item if $G$ contains none of $M_1, M_2, M_3, M_4$ as a topological minor, then $G$ is a chain of parallel paths, and admits no BP.
\end{enumerate}
\end{theorem}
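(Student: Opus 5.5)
This theorem restates results of \citet{Macko2013}, so the proof mostly organizes their two arguments. We do not have Figure~\ref{fig:forbidden}. We assume $M_1,M_2,M_3$ are the small Braess-type gadgets (including the classic Wheatstone/Braess graph and its dynamic variants), and that $M_4$ is the remaining pattern distinguishing chains of parallel paths from chains of BPAs.

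\textbf{Part~1.} The plan has two steps.

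First, exhibit a bad gadget instance. For each $M_i$, $i\in\{1,2,3\}$, we give explicit capacities, transit times and a constant inflow rate $\mu$. We then compute the Nash flow over time via the thin-flow characterization of Theorem~\ref{thm:eq}, which for tiny graphs reduces to a short phase-by-phase calculation. The goal is to show that deleting a specific edge strictly lowers $SC$.

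Second, lift the gadget to any $G$ containing $M_i$ as a topological minor. Fix a subgraph of $G$ isomorphic to a subdivision of $M_i$. On each subdivided path, give the first edge the gadget's capacity and transit time. Give the remaining edges of that path zero transit time and huge capacity, so the path behaves like the original edge. Give every edge outside the subdivision huge transit time, so no particle ever uses it at equilibrium; this works because $SC$ is finite and bounded by the gadget value. The source and sink of $M_i$ must be linked to $s,t$ of $G$. Do this with connecting paths of huge capacity and zero transit time, chosen so they create no new shortcuts. Irredundancy guarantees these paths exist.

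Under this choice, the Nash flow on $G$, and on $G$ minus the image of the deleted edge, coincides up to time shifts with the gadget flow. Hence BP transfers. The main obstacle is ensuring that the connecting paths and the "huge" edges never become active. We handle this by taking their lengths larger than the gadget's maximum latency, so that $l_w(\theta)$ is always realized inside the embedded gadget.

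\textbf{Part~2.} Again two steps.

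First, the graph-theoretic claim. Assume $G$ is irredundant and single-source single-sink and excludes all four minors. Excluding the Wheatstone-type minor forces $G$ to be series–parallel. Decompose $G$ at its cut vertices into a chain of two-terminal blocks. Within each block, excluding the remaining minors forbids any nested parallel composition, so every block is a bundle of internally disjoint paths. Hence $G$ is a chain of parallel paths.

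Second, the dynamic claim, that such a network admits no BP. Treat each block independently, since in series the outflow of one block is the inflow of the next. On a bundle of parallel paths, with subdivisions contracted, the Nash flow fills paths in order of free-flow length, and $SC$ equals the latency of the last particle. Removing a path can only reduce total capacity at every time. We show by induction along the chain that the arrival (outflow) rate of each block is pointwise dominated after deletion, so the latencies can only increase.

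I expect this monotone-comparison step to be the hardest part. The plan is to argue it directly from the queue dynamics of Theorem~\ref{thm:eq}: the derivative $l'_t$ of the block equals $\mu$ divided by the sum of capacities of the active paths, and this quantity can only grow when paths are removed.
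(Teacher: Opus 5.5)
\textbf{Gap 1: the comparison step in Part~2 fails.} Your graph-theoretic step in Part~2 is fine. It is Lemma~\ref{lem:topology} plus its mirror image for vertices of in-degree at least two. The problem is the dynamic step you flag as hardest. The invariant you want to carry along the chain is false: that deleting paths makes each block's arrival rate pointwise smaller because $l'_t=\mu/(\text{active capacity})$ can only grow.

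Take one block of three parallel $s$-$t$ edges with $(\tau_e,c_e)=(0,1),(1,1),(3/2,100)$ and $\mu=3$.
\begin{itemize}
\item In $G$, the arrival rate at $t$ is $1$ on $[0,3/2)$, $2$ on $[3/2,3)$, and $3$ afterwards.
\item In $H=G-e_1$, it is $0$ on $[0,1)$, $1$ on $[1,7/4)$, and $3$ afterwards.
\end{itemize}
So on $[7/4,3)$ the smaller network delivers at rate $3>2$. For entry times $\theta\in[1/2,3/2)$ you get $l'_t=1$ in $H$ but $l'_t=3/2$ in $G$. Removing the short edge makes the latency reach the wide edge's level sooner. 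What decreases under deletion is the active capacity at a given \emph{latency level}, not at a given time. Both instances have $SC=3/2$, so this contradicts only your invariant, not the theorem.

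The consequence is that the next block receives two inflows that are not pointwise ordered, so the rate-dominance induction cannot be propagated. Lemma~\ref{lem:dominance} does not rescue it, since it compares two inflows on the \emph{same} network.

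The missing idea is that no comparison of dynamic profiles is needed. A bundle of parallel paths is a BPA, so Proposition~\ref{char:BPAflow} applies. For any nondecreasing piecewise-constant inflow with maximum rate $\mu$, a block's outflow is again nondecreasing and piecewise constant with maximum $\mu$. Moreover, its maximum latency is the static value $\tau(B,c,\mu)$, independent of the profile. For a bundle of paths this is the length of the longest path used when paths are filled to their bottleneck capacities in order of length until $\mu$ is reached. Hence $SC(G)=\sum_i\tau(B_i,c,\mu)=\tau(G,c,\mu)$. By Lemma~\ref{lem:monotone} this can only increase under deletion (and if $c(H)<\mu$, then $SC(H)=\infty$ anyway). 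This is the content of Theorem~\ref{thm:noBP}, which contains the no-BP half of Part~2. The paper itself does not reprove the statement; it imports it from \cite{Macko2013}.

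\textbf{Gap 2: Part~1 has no content yet.} Its whole substance is an explicit BP instance on each gadget, and you give none. For $M_3$, which every $s$-$t$ paradox contains, a dynamic version of the classical Braess instance can be made to work. But $M_1$ and $M_2$ arise inside series-parallel blocks (see the proof of Lemma~\ref{lem:topology}), where static routing never exhibits the paradox. Constructing those dynamic instances and computing their Nash flows is precisely the new contribution of \cite{Macko2013}, and it cannot be done without the concrete gadgets.

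Your lifting step is sound in outline, but it needs two justifications you only assert.
\begin{itemize}
\item Choose the outside transit time larger than the $SC$ of both the gadget and the gadget minus the deleted edge. Then check that the zero extension is a Nash flow. It suffices that every vertex on a flow-carrying path has label at most $\theta+SC$, so an outside detour never realises such a label. Your claim that every label is realised inside the gadget is stronger than what holds or is needed.
\item The connecting $s$-$o$ and $d$-$t$ paths meet the embedded subdivision only at $o$ and $d$. This follows from acyclicity. If $G$ has a cycle, it is not series-parallel, and you should instead lift via the $s$-$t$ paradox, which already contains $s$ and $t$.
\end{itemize}
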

\begin{figure}
    \centering
    \includegraphics[width=0.75\linewidth]{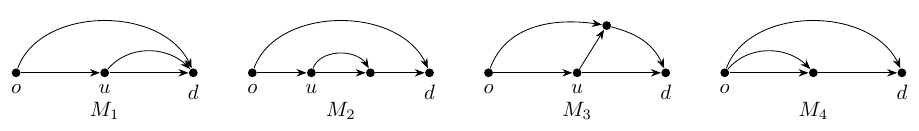}
     \caption{Forbidden minors for BP}\label{fig:forbidden}
\end{figure}

\begin{conjecture}\cite{Macko2013} \label{conj:BP}
\redcomment{For a fixed positive constant inflow rate,} a single-source single-sink graph $G$ admits BP if and only if $G$ contains at least one of  $M_1, M_2, M_3$ as a topological minor.    
\end{conjecture}

\section{Topological Characterizations}\label{sec:topo}
Let $G=(V,E)$ be a graph. If $w$ is a vertex or an edge of $G$, we often write $w\in G$, instead of $w\in V$ or $w\in E$. Graph  $G$ is \emph{connected} (resp.\ \emph{$2$-connected}) if its underlying undirected graph is connected (resp.\ 2-connected). A vertex in $G$ is a cut-vertex if its removal from $G$ leaves the graph unconnected.  A \emph{block} of $G$ is a maximal connected subgraph of $G$ without cut-vertices.  

\subsection{The $s$-$t$ Paradox}\label{subsec:paradox}
It is well-known that an irredundant graph is $s$-$t$ series-parallel if and only if its underlying undirected graph does not have two $s$-$t$ paths that pass a common edge in opposite directions. By the equivalent definition, one can prove that if an irredundant graph is not series-parallel, it must contain a so-called $s$-$t$ paradox as a subgraph (see, e.g., Theorem 3.5 of \cite{cdh2016}).
\begin{definition}\label{def:stparadox}
A {graph $H$} is called an \emph{$s$-$t$ paradox} if $H=P_1\cup P_2\cup P_3$ is the union of three paths $P_1$, $P_2$ and $P_3$ with the following properties:
\begin{itemize}
\item $P_1$ is an $s$-$t$ path   going through distinct vertices $a,u,v,b$ in this order;
\item $P_2$ is an $a$-$v$ path   with $V(P_2)\cap V(P_1)=\{a,v\}$;
\item $P_3$ is a $u$-$b$ path   with $V(P_3)\cap V(P_1)=\{u,b\}$  and $V(P_3)\cap V(P_2)=\emptyset$.
\end{itemize}
\end{definition}
\begin{figure}[h!]
    \centering
    \includegraphics[width=0.5\linewidth]{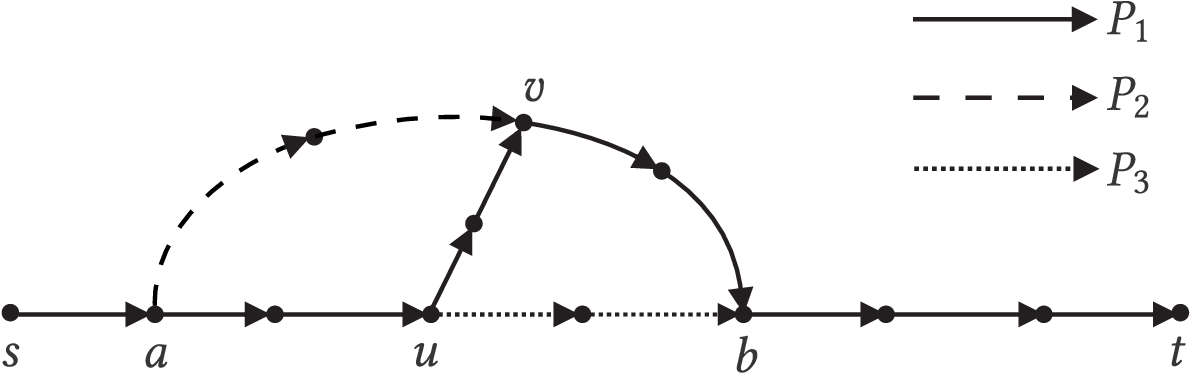}
    \caption{An $s$-$t$ paradox}
    \label{fig:st-paradox}
\end{figure}
\begin{lemma}\cite{cdh2016}\label{lem:st-paradox}
  Let $G$ be an irredundant $s$-$t$ graph. Then $G$ is $s$-$t$ series-parallel if and only if $G$ does not contain any $s$-$t$ paradox.
\end{lemma}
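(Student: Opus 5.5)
The statement to prove is Lemma~\ref{lem:st-paradox}: an irredundant $s$-$t$ graph $G$ is $s$-$t$ series-parallel if and only if it contains no $s$-$t$ paradox. I will use the characterization recalled just before Definition~\ref{def:stparadox}. An irredundant graph is $s$-$t$ series-parallel exactly when no two $s$-$t$ paths traverse a common edge in opposite directions of the underlying undirected graph. Both directions will be proved through this criterion.

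\emph{(Only if.)} Suppose $G$ contains an $s$-$t$ paradox $H=P_1\cup P_2\cup P_3$ with $P_1$ visiting $a,u,v,b$ in this order. I build two undirected $s$-$t$ routes that cross the segment $P_1[u,v]$ in opposite directions:
\begin{itemize}
\item the first is $P_1$ itself, which crosses the segment from $u$ to $v$;
\item the second follows $P_1[s,a]$, then $P_2$ from $a$ to $v$, then $P_1[u,v]$ backwards from $v$ to $u$, then $P_3$ from $u$ to $b$, and finally $P_1[b,t]$.
\end{itemize}
The disjointness conditions in Definition~\ref{def:stparadox} make the second route a simple path in the underlying undirected graph. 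Indeed, $P_2$ and $P_3$ meet $P_1$ only at their endpoints and are disjoint from each other. Also $s,a$ come before $u$ and $v,b,t$ come after it on $P_1$, so the pieces share only their junction vertices. Since $G$ is irredundant, every edge lies on an $s$-$t$ path. Hence the existence of these two oppositely oriented routes through an edge of $P_1[u,v]$ violates the undirected characterization, so $G$ is not series-parallel.

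\emph{(If.)} Suppose $G$ is irredundant but not series-parallel. Then there are two $s$-$t$ paths $Q$ and $R$, in the undirected sense, through some edge traversed in opposite directions. Because every directed edge lies on a directed $s$-$t$ path, I can take $Q$ to be a directed $s$-$t$ path $P_1$. I then choose the configuration so that the reversed portion is minimal, and let $u\to v$ be a maximal subpath of $P_1$ that $R$ uses backwards. Directedness and irredundancy give the remaining pieces:
\begin{itemize}
\item a directed path reaching $v$ from an earlier vertex $a$ of $P_1$, extracted from the part of $R$ before $v$ together with edges on directed $s$-$t$ paths;
\item a directed path from $u$ to a later vertex $b$ of $P_1$.
\end{itemize}
Shortcutting each of these at its first and last contact with $P_1$ gives $P_2$ and $P_3$ with $V(P_i)\cap V(P_1)$ equal to the endpoint pair.

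The main obstacle is guaranteeing that $V(P_2)\cap V(P_3)=\emptyset$. If the two paths meet at a vertex $x$, I would reroute: concatenating $P_2$ up to $x$ with $P_3$ after $x$ (or the reverse) gives a directed path that bypasses part of $P_1$. This yields a new choice of $P_1$ whose crossing structure is strictly smaller. An extremal choice, for instance minimizing $|E(P_1)\triangle E(P_2\cup P_3)|$ or the number of vertices of $H$, excludes this case. Since this extremal argument is delicate, I would simply invoke Theorem~3.5 of \cite{cdh2016}, where exactly this step is carried out.
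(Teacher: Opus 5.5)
This matches the paper, which gives no self-contained argument either: it treats the undirected opposite-direction criterion as well known and cites Theorem~3.5 of \cite{cdh2016} for the nontrivial direction. Your easy direction is correct: the route $P_1[s,a]\cup P_2\cup P_1[u,v]\cup P_3\cup P_1[b,t]$, with $P_1[u,v]$ traversed backwards, is a simple undirected $s$-$t$ path that crosses $P_1[u,v]$ against $P_1$.

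Your sketch of the converse, however, is not a proof and misplaces the difficulty. An overlap of $P_2$ and $P_3$ is harmless: if $x$ is the first vertex of $P_3$ lying on $P_2$, then $P_1[s,u]\cup P_3[u,x]\cup P_2[x,v]\cup P_1[v,t]$, together with $P_2[a,x]$ and $P_1[u,v]$, is already an $s$-$t$ paradox, with $a,u,x,v$ playing the roles of $a,u,v,b$. The step that actually fails is finding a directed path into $v$ from a vertex of $P_1$ before $u$, internally disjoint from $P_1$, for fixed $P_1$ and $R$. Take edges $su,uv,vt,vw,wt,sw,ut$ with $P_1=s\,u\,v\,t$ and $R=s\,w\,v\,u\,t$: the only edge entering $v$ is $uv$, so no such path exists. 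Handling that requires re-choosing $P_1$, so the cited theorem is what carries this direction, exactly as in the paper.
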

Clearly, the occurrence of the $s$-$t$ paradox implies that the graph contains $M_3$ as a topological minor. Lemma~\ref{lem:st-paradox} gives the following immediate corollary.

\begin{corollary}\label{cor:series-parallel}
    If an irredundant graph contains no $M_3$ as a topological minor, then it is series-parallel.
\end{corollary}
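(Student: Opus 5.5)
The plan is to argue by contraposition and combine Lemma~\ref{lem:st-paradox} with the observation, made just before the statement, that any $s$-$t$ paradox yields $M_3$ as a topological minor. So let $G$ be an irredundant $s$-$t$ graph that is not series-parallel. By Lemma~\ref{lem:st-paradox}, $G$ contains an $s$-$t$ paradox $H=P_1\cup P_2\cup P_3$ as a subgraph. It then remains to show that $H$ contains a subdivision of $M_3$. Since containment of topological minors is transitive through subgraphs, this would give that $G$ contains $M_3$ as a topological minor, which is the contrapositive of the corollary.

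For the key step I would read $M_3$ from Figure~\ref{fig:forbidden} as the Wheatstone-type (Braess) configuration. It has branch vertices $s,a,u,v,b,t$, or possibly the reduced version in which $s=a$ and $b=t$. Its edges are the spine $s\to a\to u\to v\to b\to t$ together with the two crossing chords $a\to v$ and $u\to b$. The subdivision is then read off from Definition~\ref{def:stparadox}.

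\emph{Spine.} The subpaths of $P_1$ between consecutive vertices among $s,a,u,v,b,t$ subdivide the spine edges. They are internally disjoint because $P_1$ is a path.

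\emph{Chords.} $P_2$ subdivides $a\to v$ and $P_3$ subdivides $u\to b$.

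\emph{Disjointness.} The conditions $V(P_2)\cap V(P_1)=\{a,v\}$, $V(P_3)\cap V(P_1)=\{u,b\}$ and $V(P_3)\cap V(P_2)=\emptyset$ ensure that all these branch paths meet only at their prescribed endpoints. Hence $H$ is exactly a subdivision of $M_3$, with edge directions preserved.

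The only real obstacle is reconciling this with the precise form of $M_3$ in the figure. If $M_3$ is drawn with $s=a$ and/or $b=t$ identified, I would first contract the spine segments $s\to a$ and $b\to t$. Contraction is not allowed for topological minors, so instead I would note that $M_3$ is then a subgraph of the form above once those segments are deleted. Deleting them leaves an $a$-$b$ subdivision of $M_3$, which is still a subgraph of $G$, and this suffices. No irredundancy issues arise, since topological-minor containment only requires a subgraph.
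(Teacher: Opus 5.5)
Your proposal is correct and follows the paper's route: argue by contraposition using Lemma~\ref{lem:st-paradox}, then observe that an $s$-$t$ paradox contains a subdivision of $M_3$, a step the paper simply calls clear. Your final embedding is the one that works. It uses branch vertices $a,u,v,b$ after discarding $P_1[s,a]$ and $P_1[b,t]$, which is right because $M_3$ must be the four-vertex Braess graph; the six-branch-vertex reading you list first would not cover the case $a=s$ or $b=t$.
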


{It is evident that $s$-$t$ pseudo-arborescences must be irredundant. Moreover, they do not contain any $s$-$t$ paradox, because the paradox possesses a vertex $u\notin\{s,t\}$ and at least two different $u$-$t$ paths (see Figure~\ref{fig:st-paradox} for an illustration). It follows from Lemma~\ref{lem:st-paradox} that bipolar pseudo-arborescences (BPAs) are all series-parallel.}

\subsection{Equivalent Graphical Description}\label{sub:eqi}
It can be shown that  an irredundant graph containing no $M_3$ as a topological minor must be series-parallel (see Corollary~\ref{cor:series-parallel} in \purplecomment{Section} \ref{subsec:paradox}).
\begin{definition}
    A graph $G$ with source $s$ and sink $t$ is \emph{$s$-$t$ series-parallel} or simply \emph{series-parallel}  if one of the following holds:
    \begin{itemize}
        \item $G$ consists of a single edge $st$. 
        \item $G$ can be obtained by connecting two smaller $s_i$-$t_i$ series-parallel graphs $G_i$, $i=1,2$ \emph{in series}:  merging $t_1$ with $s_2$ and renaming $s_1$ as $s$ and $t_2$ as $t$. 
        \item $G$ can be obtained by connecting two smaller $s_i$-$t_i$ series-parallel graphs $G_i$, $i=1,2$ \emph{in parallel}: merging $s_1$ and $s_2$ into $s$ and  merging $t_1$ and $t_2$ into $t$.
    \end{itemize}
\end{definition}

\begin{definition}\label{def;bpa}
    An $s$-$t$ graph is an \emph{$s$-$t$ pseudo-arborescence} or  a \emph{bipolar pseudo-arborescence} (\emph{BPA}) if $|\delta^-(s)|=0$,  $|\delta^+(t)|=0$, and $|\delta^+(v)|=1$ for any other vertex $v$. 
\end{definition}

An $s$-$t$ pseudo-arborescence is, in essence, an in-arborescence rooted at sink $t$ in which all leaf vertices are merged into a single source vertex $s$. The unique source $s$ and sink $t$ in the BPA are collectively called its \emph{polar vertices}. Let $G=(V,E)$ be an  $s$-$t$ pseudo-arborescence, and let $\alpha,\beta$ be two vertices or two edges in $G$. We call $\alpha$ a descendant of $\beta$, and equivalently write $\alpha\prec\beta$, if there is an $s$-$t$ path in $G$ that visits $\alpha$ before $\beta$. By the arborescence structure, for any $\alpha$ in $ V\setminus\{s,t\}$ or $E\setminus\delta^-(t)$, the vertex set or edge set of the unique path starting from $\alpha$ and ending at $t$ consists of $\alpha$ and all its ancestors. We say that $\alpha$ and $\beta$ are \emph{parallel} if $\alpha\not\prec\beta$ and $\beta\not\prec\alpha$. 

\begin{remark} \label{re:1path}
    If $G$ is an $s$-$t$ pseudo-arborescence, then it is irredundant, and for any vertex $v\in V\setminus \{s\}$, the $v$-$t$ path is unique, which implies that $G$ is series-parallel. 
\end{remark}

\begin{definition}
    A graph is called a \emph{chain of bipolar pseudo-arborescences} (\emph{chain of BPAs}) if it is obtained by sequentially connecting bipolar pseudo-arborescences in series. \redcomment{(see Figure~\ref{fig:BPAs}).}
\end{definition}
    
\begin{figure}[H]
    \centering
    \includegraphics[width=0.75\linewidth]{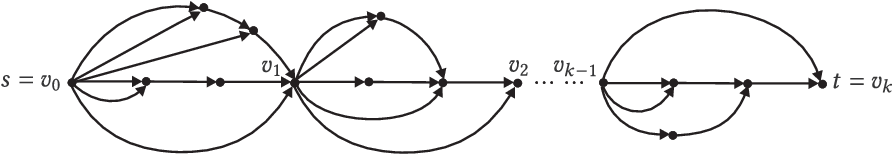}
    \caption{A chain of BPAs. \redcomment{Inside each block, merged branches share a unique suffix; new splits occur only at the polar vertices joining blocks.}}
    \label{fig:BPAs}
\end{figure}

 By Remark \ref{re:1path},  all chains of BPAs form a special class of series-parallel graphs. Clearly, a series-parallel graph is a chain of BPAs if and only if all its blocks are BPAs.

\begin{lemma}\label{lem:topology}
    An irredundant graph is a chain of BPAs if and only if it contains none of $M_1, M_2, M_3$ as a topological minor.
\end{lemma}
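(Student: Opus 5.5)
The plan is to prove the two directions separately. Throughout I read the forbidden graphs of Figure~\ref{fig:forbidden} as follows: $M_3$ is the Braess (Wheatstone) graph. $M_1$ and $M_2$ are the two small series-parallel configurations in which a non-polar vertex of a $2$-connected piece branches into two internally disjoint paths toward the sink, while the source also has a route that bypasses that vertex. This is the ``split after merge'' pattern that a BPA forbids. (I expect $M_4$ to be the mirror ``merge before split'' pattern, which BPAs allow; the proof does not use it.) I rely on this reading, so the final write-up should check it against the figure. If $M_1,M_2$ differ in detail, the extraction step below must be adapted, but the strategy stays the same.

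\emph{Chains of BPAs exclude $M_1,M_2,M_3$.} By Remark~\ref{re:1path} every BPA is series-parallel, and a series connection of series-parallel graphs is series-parallel, so a chain of BPAs $G$ is series-parallel. If $G$ contained a subdivision of $M_3$, then that subgraph, which is irredundant with its own source and sink, would contain an $s$-$t$ paradox in the sense of Definition~\ref{def:stparadox}. I would extend this paradox to one of $G$ by prefixing an $s$-$a$ path and suffixing a $b$-$t$ path; disjointness of these extensions follows from the fact that in an irredundant series-parallel graph no two $s$-$t$ paths traverse a common edge in opposite directions. This contradicts Lemma~\ref{lem:st-paradox}. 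For $M_1$ and $M_2$, note that each is $2$-connected. Hence any subdivision of either lies inside a single block of $G$, that is, inside one BPA $B$ with poles $s_B,t_B$. The branching vertex of $M_1$ or $M_2$ has out-degree at least $2$ in that subdivision, so it must be $s_B$, because every other vertex of $B$ has out-degree $1$. I would then show that the bypass path of $M_1$ or $M_2$ forces that vertex to have an in-edge inside $B$, which is impossible since $|\delta^-_B(s_B)|=0$.

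\emph{Excluding $M_1,M_2,M_3$ forces a chain of BPAs.} By Corollary~\ref{cor:series-parallel}, $G$ is series-parallel. The blocks of an irredundant series-parallel graph are strung in series along its cut vertices, and each block is a $2$-connected series-parallel graph with poles $s_i,t_i$. So it suffices to show that every block is a BPA. The conditions $|\delta^-(s_i)|=0$ and $|\delta^+(t_i)|=0$ follow from irredundancy together with the absence of opposite-direction traversals. The substantive claim is therefore that no non-polar vertex $v$ of a block $B$ has two out-edges. Suppose $v$ has out-edges $e_1,e_2$. I would take the smallest parallel composition in the series-parallel decomposition tree of $B$ that contains both $e_1$ and $e_2$. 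Inside it there are two $v$-$w$ paths that are internally disjoint and meet first at some $w$. Because $v\neq s_i$ and $B$ is $2$-connected, there is also an $s_i$-$t_i$ path $R$ that avoids $v$. The two cases are $w=t_i$ and $w\neq t_i$, where in the latter case $w$ is followed by a $w$-$t_i$ path. Combining the two $v$-$w$ paths, an $s_i$-$v$ path, and $R$ should then yield a subdivision of $M_1$ in one case and of $M_2$ in the other.

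The main obstacle is this extraction step: choosing $R$ and the $s_i$-$v$ path so that all of the pieces are internally disjoint, as a subdivision requires. My first approach would be to argue inside the decomposition tree. Let $P$ be the parallel node whose two branches contain $e_1$ and $e_2$, and let $Q$ be the lowest ancestor parallel node of $v$ whose source is not $v$. Such a $Q$ exists because $v$ is not a pole of $B$, and since $v$ is not a pole of $Q$, the branches of $Q$ that avoid $v$ supply $R$. The series structure between $Q$ and $P$ then supplies the $s_i$-$v$ and $w$-$t_i$ connectors disjointly.
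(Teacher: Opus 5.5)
Your proposal is correct. Your reading of $M_1,M_2$ also matches what the paper's proof produces: a non-polar vertex splits into two internally disjoint paths that re-merge at the sink (resp.\ strictly before it), plus a bypass. Your route nevertheless differs from the paper's in both directions.

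\textbf{Necessity.} The paper uses one uniform block argument. Every $M_i$, including the Wheatstone graph $M_3$, is $2$-connected, so any subdivision lies in a single BPA block. Each $M_i$ has a vertex with an in-edge and two out-edges, which a BPA block cannot contain. Your detour for $M_3$ (series-parallelism plus extension to an $s$-$t$ paradox) is valid, and the disjointness of prefix and suffix is really just acyclicity, but the detour is unnecessary. Also, the in-edge at the branching vertex of $M_1/M_2$ comes from the source-to-$v$ path, not from the bypass.

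\textbf{Sufficiency.} The paper works with paths only:
\begin{itemize}
\item an $o$-$d$ path $P_1$ through $u$;
\item an $o$-$d$ path $P_2$ avoiding $u$, which exists since otherwise $u$ is a cut vertex;
\item an extra out-edge $uw\notin P_1$;
\item a shortest path $P_3$ from $w$ to $P_1\cup P_2$.
\end{itemize}
The union is a subdivision of $M_1$, $M_2$ or $M_3$ depending on where $P_3$ lands. The paper leaves this case check to the reader, and $M_3$ serves as an escape case there. Your decomposition-tree argument is different. The lowest common ancestor of $e_1,e_2$ is a parallel node with source $v$ and sink $w$. The lowest parallel ancestor $Q$ in which $v$ is interior supplies the bypass, and the intermediate nodes supply the connectors. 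Internal disjointness is then automatic, because every decomposition node meets the rest of the graph only at its poles. You also land directly in $M_1$ or $M_2$, without needing $M_3$. The price is tree bookkeeping, whereas the paper's version is shorter but hides the disjointness verification.

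Three small repairs are needed when you write it up.
\begin{enumerate}
\item The existence of $Q$ does not follow from $v$ not being a pole alone: it fails for $s\to v$ followed by two parallel $v\to t$ edges. You need that the root of the decomposition of the block $B$ is a parallel node. This holds because a series root would create a cut vertex in $B$.
\item Once the bypass is taken inside $Q$, the connectors must run from the source $x$ of $Q$ to $v$ and from $w$ to the sink $y$ of $Q$, not to $s_i,t_i$.
\item The $M_1/M_2$ dichotomy is then $w=y$ versus $w\neq y$, not $w=t_i$ versus $w\neq t_i$.
\end{enumerate}
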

\begin{proof}
    The necessity is straightforward. Suppose that $G$ is a chain of BPAs. If  $G$ contains some $M_i$ ($i\in\{1,2,3\}$) as a topological minor, then so does one of its block, which is a BPA. It follows that this BPA  has a non-polar vertex $u$ with out-degree 2 or more, a contradiction to Definition~\ref{def;bpa}. 

    To see the sufficiency, suppose that $G$ contains none of $M_1,M_2,M_3$ as a topological minor. It follows from Corollary~\ref{cor:series-parallel} that $G$ is series-parallel. Assume for a contradiction that $G$ is not a chain of BPAs. Then $G$ has a block $B$ that is not BPA. Because $G$ is series-parallel, so is its block $B$. Let $o$ and $d$ denote the source and sink of $B$, respectively. Note that every vertex in $V(B)\setminus\{o,d\}$ has out-degree at least 1. Therefore, there exists $u\in V(B)\setminus\{o,d\}$ with $|\delta^+_B(u)|\ge2$. It follows from $|V(G)|\ge|\{o,d,u\}|=3$ that $B$ is 2-connected. 
    \purplecomment{By the $o$-$d$ series-parallel structure, there exists an $o$-$d$ path $P_1$ containing $u$. Moreover, by the 2-connectivity of $B$, there exists an $o$-$d$ path $P_2$ avoiding $u$; otherwise, $u$ would be a cut vertex of $B$.}
    By $|\delta^+_B(u)|\ge2$, there exists $uw\in\delta^+_B(u)\setminus E(P_1)$. Let $P_3$ be a shortest path from $w$ to a vertex in $P_1\cup P_2$. By the acyclic property of series-parallel graphs, it is easy to check that $P_1\cup P_2\cup uw\cup P_3$ is a subdivision of $M_1$ or $M_2$ or $M_3$, a contradiction.\qed 
    \end{proof}

\subsection{Network Decomposition}\label{sub:nde}
For any edge subset $F$ of a graph $G$, let $G[F]$ denote the subgraph of $G$ induced by $F$, where the edge set is $F$, and the vertex set consists of the end-vertices of edges in $F$. For simplicity, we usually identify a graph with its edge set when no ambiguity arises. In other words, we typically use $F$ to represent $G[F]$ or vice versa.

Given an instance $(G,s,t,c,\tau,\mu)$ or $(G,\mu)$ for short, in this subsection, we always assume $\mu\le c (G)$ and $G$ is series-parallel. For any $s$-$t$ subgraph $H$ of $G$, we define its (source-sink) \emph{capacity} $c(H)$  as the minimum capacity of an $s$-$t$ cut in it, and its (source-sink) \emph{diameter} $\tau(H)$ as the minimum length of an $s$-$t$ path. In particular,  if $P$ is an $s$-$t$ path, then $c(P)=\min_{e\in P}c_e$ is its minimum edge capacity, and $\tau(P)=\sum_{e\in P}\tau_e$ equals its length. The \emph{shortest-path subgraph} of $(G,\tau)$ or $G$ (if $\tau$ is clear from the context) is the subgraph of $G$ that consists of all shortest $s$-$t$ paths in $G$. For notational simplicity, for any function $g$ defined on $E(G)$ (such as $c$ and $\tau$), when considering its restriction to the edge set of a subgraph of $G$, we usually use the same symbol $g$ to denote this restriction. 

Unlike the dynamic flows discussed in Section 2, the $s$-$t$ flows mentioned in this section are all static flows in the classical edge-capacitated network setting. These static flows serve merely as technical tools to help us characterize the network structure, preparing for our subsequent study of dynamic flows (the main subject of this paper). \purplecomment{Here, we denote by $\mathbb R_+$ and $\mathbb R_{++}$ the sets of nonnegative and positive real numbers, respectively.} For any $s$-$t$ flow $\eta$ in $G=(V.E)$ with edge capacity $c\in\mathbb R_{++}^E$, we consider it both as an edge flow $\eta \in \mathbb{R}_+^E$, which assigns $\eta_e \in [0, c_e]$ to every edge $e$ and satisfies flow conservation conditions, and as a path flow $\eta \in \mathbb{R}_+^{\mathcal P_{s,t}}$, which assigns a non-negative $\eta_P$ to every $s$-$t$ path $P$ and satisfies capacity constraints: $\sum_{P:e\in P}\eta_P\le c_e$, $e\in E$. The (flow) value of $\eta$ is denoted by $|\eta|=\sum_{P\in\mathcal{P}_{s,t}}\eta_P$. 

Next, we decompose $G$ by iteratively ``removing'' an $s$-$t$ flow in the current shortest-path subgraph, until the total flow value reaches $\mu$. Each $s$-$t$ flow removed is a maximum flow within the current shortest-path subgraph. The only possible exception is the last iteration, where the flow may not saturate the minimum cut, as the target flow amount $\mu$ has already been met. The formal process is presented in the Algorithm~\ref{alg:flow} below.

 \begin{algorithm}
    \caption{Network Decomposition by Shortest-Path Flows}\label{alg:flow}
    \KwIn{instance $(G,s,t,c,\tau,\mu)$, where $G=(V,E)$ is series-parallel and $0<\mu\le c(G)$}
    \KwOut{a series of $(\xi^{(i)},c^{(i)},\tau^{(i)})$, $i=1,2,\ldots$}
   $\mu^{(1)}\gets\mu$;  $ G^{(1)} \gets G$; $i\gets 0$\;
    $G^{\xi,1}\gets G$; $c^{\xi,1}\gets c$\tcp*[r]{\tiny We record $G^{\xi,1}$ and $c^{\xi,1}$ for theoretical proofs only}
  \Repeat{\redcomment{$\mu^{(i+1)}= 0$}}
    {  $i \leftarrow i+1$\;
       $H^{(i)}\gets$ the shortest-path graph of $(G^{(i)},\tau)$, $c^{(i)}\gets c(H^{(i)})$; $\tau^{(i)}\gets\tau(H^{(i)})$\;
        $\xi^{(i)}\gets $ an $s$-$t$ flow in $(H^{(i)},c)$ with value \redcomment{$|\xi^{(i)}|=\min\{\mu^{(i)},c(H^{(i)})\}$}\;
         $\mu^{(i+1)}\gets\mu^{(i)}-|\xi^{(i)}|$\;
         { $c_e\gets c_e-\xi^{(i)}_e$ for all $e\in G^{(i)}$ \tcp*[r]{\tiny Remove capacities occupied by $\xi^{(i)}$}
         \purplecomment{$G^{(i+1)}\gets$ the graph induced by $\{e\in E\mid c_e>0\}$\;}
         \redcomment{$G^{\xi,i+1}\gets G^{(i+1)}$; $c^{\xi,i+1}\gets c|_{E(G^{(i+1)})}$\;}
           }
      }
   $m\gets i$\;      
    \Return{$\xi^{(i)}$ and $(c^{(i)},\tau^{(i)})$, $i=1,\ldots,m$}
  \end{algorithm}
 
  In Algorithm~\ref{alg:flow}, $G^{\xi,m+1}$ is either a graph or an empty set. Possibly $G^{\xi,m+1}$ contains no $s$-$t$ path. 

We call $\xi=(\xi^{(i)}:i=1,\ldots,m)$ a \emph{shortest-path-flow decomposition} or \emph{SPF decomposition} of instance $(G,s,t,c,\tau,\mu)$ with \emph{dimension} $\mathrm{dim}(\xi)=m$, and call 
$(c^{(i)},\tau^{(i)})_{i=1}^m$ the \emph{capacity-diameter slicing}  of the instance associated with $\xi$. For each $i=1,\ldots,m$, flow $\xi^{(i)}$ goes along some $s$-$t$ paths in the current shortest-path graph $H^{(i)}$; all these paths are shortest and have the identical length $\tau^{(i)}=\tau(H^{(i)})$. For ease of future reference, we define $\tau(\xi^{(i)}) = \tau^{(i)}$ by a slight abuse of notation. Clearly, the diameter sequence is non-decreasing: 
$\tau(\xi^{(1)})\le \tau(\xi^{(2)})\le\cdots\le\tau(\xi^{(m)})\le\tau(G^{\xi,m+1})$.



\subsection{Topological Results}\label{sub:topore}


\redcomment{The next results through Lemma~\ref{lem:monotone} hold for general series-parallel graphs: augmentation first gives a well-defined decomposition, and numerical slice invariance then yields monotonicity of the terminal SPF diameter. Starting with Lemma~\ref{lem:rightmost}, we specialize to BPAs to obtain a stronger invariant, namely a canonical rightmost cut. This distinction separates what follows from series-parallel structure alone from what is needed in the dynamic analysis.}

In this subsection, we \emph{assume that  $G=(V,E)$ is an $s$-$t$ series-parallel graph} with edge capacity $c\in\mathbb R^E_{++}$ and length $\tau\in\mathbb R^E_+$.

An $s$-$t$ cut in $G$ is \emph{minimal} if none of its proper subsets is an $s$-$t$ cut. For any path $P$ in $G$, the subpath of $P$ with starting vertex $u$ and ending vertex $v$ is written as $P[u,v]$; moreover, let $P(u,v]$, $P[u,v)$ and $P(u,v)$ be obtained from $P[u,v]$ by deleting $u$, $v$ and $\{u,v\}$, respectively.
\begin{lemma}\label{lem:path-cut}
   {For any $s$-$t$ path $P$ and minimal $s$-$t$ cut $F$ in $G$, it holds that $|P\cap F|=1$.}
\end{lemma}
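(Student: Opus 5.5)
Write $F=\{uv\in E\mid u\in S,\ v\in T\}$ for a partition $(S,T)$ of $V$ with $s\in S$, $t\in T$. The bound $|P\cap F|\ge 1$ is immediate: $P$ starts in $S$ and ends in $T$, so some edge of $P$ goes from $S$ to $T$. The content of the lemma is the upper bound $|P\cap F|\le 1$, and I would prove it by contradiction, combining the minimality of $F$ with the characterization of series-parallel graphs recalled in Section~\ref{subsec:paradox}: in an irredundant series-parallel graph, no two $s$-$t$ paths traverse a common edge in opposite directions of the underlying undirected graph. Equivalently, by Lemma~\ref{lem:st-paradox}, $G$ contains no $s$-$t$ paradox.

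\emph{Consequences of minimality.} Since $F$ is minimal, every edge $e=xy\in F$ is needed, so $G-(F\setminus\{e\})$ contains an $s$-$t$ path. Such a path avoids all other cut edges, so it crosses from $S$ to $T$ only via $e$. Also, any return from $T$ to $S$ would force a later crossing from $S$ to $T$. Hence this path has the form $Q_e=A_e\cdot e\cdot B_e$, where $A_e$ is an $s$-$x$ path inside $G[S]$ and $B_e$ is a $y$-$t$ path inside $G[T]$. (Some care may be needed in choosing $S$; replacing $S$ by the set of vertices reachable from $s$ in $G-F$ keeps the same $F$ by minimality.)

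\emph{The contradiction.} Suppose $P$ meets $F$ in at least two edges, and let $e_1=x_1y_1$ and $e_2=x_2y_2$ be two consecutive ones along $P$. Since $e_1\in F$ we have $y_1\in T$, and since $e_2\in F$ we have $x_2\in S$. The subpath $P[y_1,x_2]$ therefore leaves $T$ and returns to $S$, so it contains a backward edge $g=ab$ with $a\in T$, $b\in S$. Now build two $s$-$t$ paths through $g$. The first is $P$ itself, which passes $a$ before $b$. For the second, note that $b\in S$ is reachable from $s$ inside $S$ by the choice of $S$, so the path $A_{e_1}$-type route gives an $s$-$b$ path inside $S$. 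I would then take an $s$-$b$ path $R_1$ in $G[S]$, follow $P$ from $b$ onward until it reaches $x_2$ (or concatenate with $Q_{e_2}$), and use that $a\in T$ reaches $t$ through $T$ via some $B_e$. The goal is to exhibit the configuration of Definition~\ref{def:stparadox}, with $P_1=Q_{e_1}$, $u$ near $x_1$ or $y_1$, and $v$ near $a$, or else two $s$-$t$ paths using $g$ in opposite directions. Either outcome contradicts the series-parallel property.

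\emph{Main obstacle.} The difficulty is arranging the pieces into a genuine $s$-$t$ paradox with the required disjointness. The $S$-side and $T$-side pieces are automatically vertex-disjoint because they lie on opposite sides of the cut, and I would exploit this. The remaining work is to take first and last intersection points so that the three paths meet $P_1$ only at the prescribed vertices. If this bookkeeping becomes unwieldy, I would fall back on induction over the series-parallel decomposition:
\begin{itemize}
\item For a series composition $G=G_1\cdot G_2$, a minimal cut is a minimal cut of exactly one $G_i$, because of minimality and the cut vertex $t_1=s_2$. Any $s$-$t$ path splits into an $s$-$t_1$ part and a $t_1$-$t$ part, so the claim follows from the induction hypothesis applied to that $G_i$.
\item For a parallel composition, a minimal cut is a union of minimal cuts of $G_1$ and $G_2$, and every $s$-$t$ path lies entirely in one $G_i$, so again the induction hypothesis applies.
\item The base case, a single edge, is trivial.
\end{itemize}
This induction seems the cleanest route, and I would likely present it as the actual proof.
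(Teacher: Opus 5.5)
Your proof is correct, but the route you say you would actually present, structural induction over the series--parallel decomposition, is not the paper's.

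\textbf{The paper's route.} The paper argues directly by contradiction. Minimality of $F$ gives $s$-$t$ paths $P_1,P_2$ with $P_i\cap F=\{e_i\}$. Acyclicity then yields common vertices $a,b$ of $P_1,P_2$ bracketing $e_1,e_2$, with $P_2(a,b)$ disjoint from $P_1$. The paper takes a shortest subpath $P[u,v]$ of $P[b_1,a_2]$ running from $P_1[b_1,t]$ to $P_2[a,b)\cup P_1[s,a_1]$. It then shows $P_1\cup P_2[a,b]\cup P[u,v]$ is an $s$-$t$ paradox, contradicting Lemma~\ref{lem:st-paradox}. This is essentially the first strategy you sketched and set aside. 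Your sketch of it was incomplete: the $s$-$b$ path and the disjointness bookkeeping are exactly what the paper's shortest-subpath choice handles. Since you fall back on induction, this does not matter.

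\textbf{Your induction.} The induction is sound. For a series composition with $w=t_1=s_2$, suppose $w\in T$. Then $F\cap E(G_1)$ is already an $s$-$t$ cut of $G$: use the partition $S\cap V(G_1)$ versus everything else. Minimality forces $F\subseteq E(G_1)$, and $F$ is a minimal $s$-$w$ cut of $G_1$; the case $w\in S$ is symmetric. For a parallel composition, each $F\cap E(G_i)$ is a cut of $G_i$. It is minimal in $G_i$, because a smaller cut of $G_1$ together with $F\cap E(G_2)$ would be a proper sub-cut of $F$. When you write this up, check that these restrictions are partition-form cuts. Alternatively, note that minimal partition cuts coincide with inclusion-minimal disconnecting edge sets, since the paper defines minimality relative to partition-form cuts.

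\textbf{Trade-offs.} Your argument uses only the recursive definition. It is self-contained and avoids both the cited characterization from \cite{cdh2016} and the delicate vertex-disjointness bookkeeping. The paper's argument needs no decomposition tree and applies to any irredundant graph without an $s$-$t$ paradox. It also fits the paradox-based viewpoint used elsewhere in the paper (Corollary~\ref{cor:series-parallel}, Lemma~\ref{lem:topology}). By Lemma~\ref{lem:st-paradox}, however, the two settings coincide.
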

\begin{proof}
{Take an arbitrary $s$-$t$ path $P$ and a minimal $s$-$t$ cut $F$ in $G$. Clearly $|P\cap F|\ge1$. Assume to the contrary that  $P$ goes through two edges $e_1$ and $e_2$ in this order and $e_1,e_2\in F$. Since $F$ is minimal, there is $s$-$t$ path $P_i$ such that $P_i\cap F=\{e_i\}$ for $i=1,2$. It is not hard to see that $P_1\cup P_2\cup P$ contains an $s$-$t$ paradox, a contradiction to Lemma~\ref{lem:st-paradox}.}

{Indeed, since $G$ is acyclic, there are common vertices $a,b$ of $P_1$ and $P_2$ such that $P_i[a,b]$ contains $e_i=a_ib_i$, $i=1,2$ and $P_2(a,b)$ is vertex-disjoint from $P_1$. Let $P[u,v]$ be a shortest subpath in $P[b_1,a_2]$ such that $u\in P_1[b_1,t]$ and $v\in P_2[a,b)\cup P_1[s,a_1]$ (note that  $P[b_1,a_2]$ satisfies this condition). The minimality of $P[u,v]$ implies that $P(u,v)$ is disjoint from $P_1\cup P_2[u,v]$. The acyclic property enforces $u\not\in P_1[b,t]$ and $v\not\in P_1[s,a_1]$, It follows from $u\in P_1[b_1,b)$ and $v\in P_2(a,b)$ that $P_1\cup P_2[a,b]\cup P[u,v]$ is an $s$-$t$ paradox in $G$.}
\qed
\end{proof}

By Lemma~\ref{lem:path-cut}, a subset of edges $F\subseteq E$ is a minimal $s$-$t$ cut in the series-parallel graph $G$ if and only if $V$ can be partitioned to $S$ and $T$ such that $s\in S$, $t\in T$, all edges between $S$ and $T$ are directed from $S$ and $T$, and these edges are exactly those in $F$. 

Let $\eta$ be an $s$-$t$ flow in $(G,c)$. By \emph{augmenting} $\eta$, we mean increasing $\eta_P$ for some $s$-$t$ paths $P$ without violating edge capacities. We say that $\eta$ \emph{saturates} an $s$-$t$ cut if $\eta_e=c_e$ for every edge $e$ in the cut. Lemma~\ref{lem:path-cut} yields two crucial properties for series-parallel graphs:
\begin{itemize}
    \item \emph{Augmentation property}: any $s$-$t$ flow can be augmented to a maximum $s$-$t$ flow in $(G,c)$, whose value equals $c(G)$, the capacity of the graph.
    \item \emph{Saturation property}: any $s$-$t$ flow that saturates an $s$-$t$ cut must be maximum.
\end{itemize}
These two properties will be used repeatedly (often implicitly) in our subsequent analysis. They are equivalent because an $s$-$t$ flow cannot be strictly augmented if and only if it saturates some $s$-$t$ cut.

\redcomment{Here augmentation preserves every existing path-flow component of an arbitrary feasible $\eta$: it only adds nonnegative path flows. This is the structural form we need. The forward-only greedy method of \citet{BEIN1985} instead starts from zero and successively augments a cheapest path to solve minimum-cost flow problems. We do not claim forward-only augmentation itself as a new algorithmic idea.}

Each $s$-$t$ flow $\xi^{(i)}$ computed in Algorithm~\ref{alg:flow} can be considered as a path flow that assigns a nonnegative value $\xi^{(i)}_P$ to each $s$-$t$ path $P$ in the shortest-path graph of $G^{\xi,i}$ such that the sum of all these $\xi^{(i)}_P$'s equals the flow value $|\xi^{(i)}|$. By the definitions of $G^{\xi,i}$ and $c^{\xi,i}$, $i=1,2,\ldots$, Lemma~\ref{lem:path-cut} guarantees that Algorithm~\ref{alg:flow} terminates with finite dimension $\mathrm{dim}(\xi)=m$, and the following statement holds.

\begin{corollary}\label{lem:path}
   \redcomment{For each $i=1,\ldots,m$,
\[
\mu^{(i+1)}=\mu^{(i)}-|\xi^{(i)}|,\qquad
c^{\xi,i+1}(G^{\xi,i+1})=c^{\xi,i}(G^{\xi,i})-|\xi^{(i)}|.
\]
Moreover, $\sum_{i=1}^m|\xi^{(i)}|=\mu$. 
}
\end{corollary}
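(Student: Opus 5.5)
The plan is to prove the two identities for a fixed iteration $i$, and then sum the first one to get $\sum_{i=1}^m|\xi^{(i)}|=\mu$, using the stopping rule $\mu^{(m+1)}=0$ together with termination.

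The first identity, $\mu^{(i+1)}=\mu^{(i)}-|\xi^{(i)}|$, is simply the update line of Algorithm~\ref{alg:flow}. Nothing more is needed for it.

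The second identity concerns capacities. I would write $c'=c^{\xi,i}$ for the current capacities on $G^{\xi,i}$, and $c''=c'-\xi^{(i)}$ for the residual capacities, restricted to $G^{\xi,i+1}$, the edges with positive residual. The proof splits into two inequalities.

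\textbf{Lower bound on the new minimum cut.} Take any minimal $s$-$t$ cut $F$ of $G^{\xi,i}$. By Lemma~\ref{lem:path-cut}, every $s$-$t$ path meets $F$ exactly once, and $\xi^{(i)}$ is supported on $s$-$t$ paths of $H^{(i)}\subseteq G^{\xi,i}$. Hence $\sum_{e\in F}\xi^{(i)}_e=|\xi^{(i)}|$, and so $c''(F)=c'(F)-|\xi^{(i)}|$. An $s$-$t$ cut of $G^{\xi,i+1}$ comes from a vertex partition of $G^{\xi,i}$; removing zero-residual edges only deletes edges of the cut. So every cut of the new graph has residual capacity at least that of some cut of the old graph minus $|\xi^{(i)}|$. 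Any cut contains a minimal cut, and the computation above applies to that minimal cut. This gives $c^{\xi,i+1}(G^{\xi,i+1})\ge c^{\xi,i}(G^{\xi,i})-|\xi^{(i)}|$.

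\textbf{Upper bound on the new minimum cut.} This is the direction where the augmentation property is essential. Since $G^{\xi,i}$ is series-parallel, the augmentation property lets me extend the path flow $\xi^{(i)}$ to a maximum flow $\eta$ of $(G^{\xi,i},c')$ with $|\eta|=c'(G^{\xi,i})$, by only adding path-flow components. Then $\eta-\xi^{(i)}$ is a nonnegative path flow. It is feasible for $c''$, and it uses only edges with positive residual, so it is a flow in $G^{\xi,i+1}$ of value $c'(G^{\xi,i})-|\xi^{(i)}|$. This gives the reverse inequality by max-flow/min-cut. It also shows that $G^{\xi,i+1}$ is still a sub-series-parallel graph, possibly disconnected or empty, with capacity $0$ in the degenerate case.

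\textbf{Termination and the sum.} In each iteration, either $|\xi^{(i)}|=\mu^{(i)}$, so the loop stops, or $\xi^{(i)}$ saturates a minimum cut of $H^{(i)}$. In the latter case some edge of $H^{(i)}$ drops to zero capacity and is deleted, so $|E(G^{(i)})|$ strictly decreases. Termination therefore follows from finiteness of $E$, provided each $H^{(i)}$ exists. This holds because, by the identities above and $\mu^{(i)}\le c^{\xi,i}(G^{\xi,i})$, which is maintained inductively from $\mu\le c(G)$, we have $c^{\xi,i}(G^{\xi,i})>0$ while $\mu^{(i)}>0$, so an $s$-$t$ path exists. Telescoping the first identity then yields $\sum_{i=1}^m|\xi^{(i)}|=\mu-\mu^{(m+1)}=\mu$.
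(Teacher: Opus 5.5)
Your ``lower bound'' paragraph is correct and matches the paper's terse justification via Lemma~\ref{lem:path-cut}. Every minimal $s$-$t$ cut $F$ of $G^{\xi,i}$ carries exactly $|\xi^{(i)}|$, so $c''(F)=c'(F)-|\xi^{(i)}|$. Every cut of $G^{\xi,i+1}$ dominates such an $F$, which gives $c^{\xi,i+1}(G^{\xi,i+1})\ge c^{\xi,i}(G^{\xi,i})-|\xi^{(i)}|$.

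\textbf{Gap in the ``upper bound'' paragraph.} Exhibiting the feasible flow $\eta-\xi^{(i)}$ of value $c'(G^{\xi,i})-|\xi^{(i)}|$ in $G^{\xi,i+1}$ shows that the maximum flow, and hence the minimum cut, of the new graph is \emph{at least} this value. That is the inequality you already had, not the reverse one. So as written, $c^{\xi,i+1}(G^{\xi,i+1})\le c^{\xi,i}(G^{\xi,i})-|\xi^{(i)}|$ is never proved. Your remark that augmentation is essential for the upper bound also has the two directions swapped: the augmentation property is just another form of the $\ge$ direction.

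\textbf{The fix.} The missing direction is the easy one and needs no series-parallel structure. Take a minimum cut $F^*$ of $(G^{\xi,i},c')$, induced by a partition $(S,T)$. The net flow of $\xi^{(i)}$ across it is $|\xi^{(i)}|$, so $\xi^{(i)}(F^*)\ge|\xi^{(i)}|$. Equality in fact holds by Lemma~\ref{lem:path-cut}, since a minimum cut with positive capacities is minimal. Hence $c''(F^*)\le c'(F^*)-|\xi^{(i)}|=c^{\xi,i}(G^{\xi,i})-|\xi^{(i)}|$. Restricting $(S,T)$ to $G^{\xi,i+1}$ gives an $s$-$t$ cut there with the same residual capacity, because the deleted edges have residual $0$. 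This is exactly your first-paragraph computation applied to a minimum cut.

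With this added, the rest of your argument goes through. The termination argument and the telescoping to $\sum_{i=1}^m|\xi^{(i)}|=\mu$ only use the invariant $\mu^{(i)}\le c^{\xi,i}(G^{\xi,i})$, which needs only the $\ge$ direction you did prove.
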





By Corollary~\ref{lem:path}, we can also regard the SPF decomposition $\xi=(\xi^{(i)}:i=1,\ldots,m)$ returned by Algorithm~\ref{alg:flow} as an $s$-$t$ flow in $G$, with the understanding that $\xi_e=\sum_{i:e\in H^{(i)}}\xi^{(i)}_e$ if $e\in\cup_{i=1}^mH^{(i)}$ and $\xi_e=0$ otherwise. The flow value of $\xi$ is $|\xi|=\sum_{i=1}^m|\xi^{(i)}|=\mu$.

Given any $s$-$t$ flow $\eta$ of $(G,c)$,  the \emph{support} of $\eta$ is the subgraph induced by $\{e\in E\mid\eta_e>0\}$. When $\eta$ is viewed as the edge capacity function on its support, the source-sink capacity of the support is also called the \emph{capacity} of  $\eta$, which equals $|\eta|$ by Lemma~\ref{lem:path-cut}. The following fact is still a simple corollary of Lemma \ref{lem:path-cut}, because every $s$-$t$ path in $G$ meets every minimal $s$-$t$ cut in any subgraph of $G$ in at most one edge.
\begin{corollary}\label{cor:subgraph}
    Let $H$ be an $s$-$t$ subgraph of $G$. If $c(H)>|\eta|$, then $\{e\in E(H)\mid c_e>\eta_e\}$ contains at least an $s$-$t$ path.
\end{corollary}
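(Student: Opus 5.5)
Throughout, $H$ is a series-parallel $s$-$t$ graph with capacities $c$, and $\eta$ is an $s$-$t$ flow in $(G,c)$. We prove the contrapositive: if the residual subgraph $R:=\{e\in E(H)\mid c_e>\eta_e\}$ contains no $s$-$t$ path, then $c(H)\le|\eta|$.

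\textbf{Step 1 (a cut of saturated edges).} Suppose $R$ contains no $s$-$t$ path. Let $S$ be the set of vertices of $H$ reachable from $s$ using only edges of $R$, and let $T=V(H)\setminus S$. Then $t\in T$, and every edge of $H$ leaving $S$ lies outside $R$, so it satisfies $\eta_e\ge c_e$. Let $F$ be the set of edges of $H$ from $S$ to $T$. Since $H$ is irredundant, $F$ is an $s$-$t$ cut of $H$. Shrink $F$ to a minimal $s$-$t$ cut $F'\subseteq F$ of $H$. Every edge of $F'$ still satisfies $\eta_e\ge c_e$.

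\textbf{Step 2 (apply Lemma~\ref{lem:path-cut}).} Decompose $\eta$ into path flows $\eta_P$ over $s$-$t$ paths $P$ of $G$. We need to count the flow crossing $F'$, and this is the main obstacle: $F'$ is a cut of $H$, not of $G$, and $\eta$ may use paths of $G$ that leave $H$. We handle this with the remark preceding the statement: every $s$-$t$ path of $G$ meets every minimal $s$-$t$ cut of any subgraph of $G$ in at most one edge. To justify it, assume such a path $P$ meets two edges $e_1,e_2\in F'$. By minimality of $F'$ in $H$, there are $s$-$t$ paths $P_1,P_2$ in $H$ with $P_i\cap F'=\{e_i\}$. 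The union $P_1\cup P_2\cup P$ lies in $G$, so the paradox construction in the proof of Lemma~\ref{lem:path-cut} applies verbatim and produces an $s$-$t$ paradox in $G$. This contradicts Lemma~\ref{lem:st-paradox}.

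\textbf{Step 3 (conclude).} By Step 2,
\[
\sum_{e\in F'}\eta_e=\sum_{e\in F'}\sum_{P\ni e}\eta_P=\sum_{P}\eta_P\,|P\cap F'|\le\sum_P\eta_P=|\eta|.
\]
By Step 1,
\[
c(H)\le\sum_{e\in F'}c_e\le\sum_{e\in F'}\eta_e .
\]
Combining the two gives $c(H)\le|\eta|$, which contradicts $c(H)>|\eta|$. Hence $R$ contains an $s$-$t$ path.
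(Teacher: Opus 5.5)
Your proof is correct and takes the paper's approach: the paper justifies this corollary in one line, noting that every $s$-$t$ path of $G$ meets every minimal $s$-$t$ cut of any subgraph of $G$ in at most one edge, via the paradox argument of Lemma~\ref{lem:path-cut}. That is your Step~2, combined with the saturated-cut counting you write out in Steps~1 and~3; your appeal to irredundance of $H$ in Step~1 is unnecessary, since the $H$-edges from $S$ to $T$ form an $s$-$t$ cut by definition.
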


    
For any capacity-diameter slicing $(c^{(i)},\tau^{(i)})_{i=1}^m$ of instance $(G,s,t,c,\tau,\mu)$, with $H$ denoting the shortest-path subgraph of $G$, we have invariants $c^{(1)}=c(H)$ and $\tau^{(1)}=\tau(H)=\tau(G)$ despite the general non-uniqueness of SPF decompositions. Our goal is to prove that the entire slicing (including the dimension $m$) is an invariant of the instance, independent of the specific form of the flow decomposition. 
For notational convenience, 
if $H$ is an $s'$-$t'$ subgraph of $G$, we often write $(H,s',t',c,\tau,\mu)$ or simply $(H,\mu)$ instead of $(H,s',t',c|_{E(H)},\tau|_{E(H)},\mu)$.

\begin{lemma}\label{lem:flow-invariant}
Let $\xi=(\xi^{(i)}:i=1,\ldots,m)$ and $\zeta=(\zeta^{(j)}:j=1,\ldots,n)$ be two SPF decompositions of $(G,s,t,\tau,{c},\mu)$.  The following hold.
\begin{itemize}
    \item[(i)] If  $\mu=c(G)$, then neither $G^{\xi,m+1}$ nor $G^{\zeta,n+1}$ contains an $s$-$t$ path.
  \item[(ii)]     If $\mu<c(G)$, then  $\tau(G^{\xi,m+1})=\tau(G^{\zeta,n+1})$, and $c^{\xi,m+1}(K)=c^{\zeta,n+1}(Z)$, where $K$ and $Z$ are the shortest-path subgraphs of $(G^{\xi,m+1},\tau)$ and $(G^{\zeta,n+1},\tau)$, respectively.
\end{itemize} 
\end{lemma}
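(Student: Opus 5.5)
Part (i) is a capacity-counting argument. Part (ii) should follow from recognising the residual graph after an SPF decomposition as the residual of a minimum-cost forward-only flow, and then showing that this residual's shortest-path data does not depend on which flow was chosen.

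For (i), by Corollary~\ref{lem:path} applied repeatedly, $c^{\xi,m+1}(G^{\xi,m+1})=c(G)-\sum_{i}|\xi^{(i)}|=c(G)-\mu=0$. A graph whose source-sink capacity is zero cannot contain an $s$-$t$ path, because every edge of $G^{\xi,m+1}$ has positive residual capacity and such a path would carry positive flow. The same holds for $\zeta$. One can also argue directly: if an $s$-$t$ path of positive residual capacity existed, then $\xi$ could be augmented along it, giving a flow of value $>c(G)$ and contradicting the max-flow bound.

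For (ii), I would first note that $\mu<c(G)$ gives $c^{\xi,m+1}(G^{\xi,m+1})=c(G)-\mu>0$. Corollary~\ref{cor:subgraph} (with $H=G$, $\eta=\xi$) then guarantees an $s$-$t$ path in $G^{\xi,m+1}$, so $\tau(G^{\xi,m+1})$ is finite. The main step is to characterise the pair $\tau(G^{\xi,m+1})$ and $c^{\xi,m+1}(K)$ intrinsically, via a greedy/exchange argument.

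The plan is to show that the residual structure is determined by the forward-only "water-filling" profile. For each threshold $\lambda$, let $\Phi(\lambda)$ be the maximum value of a flow in $G$ using only $s$-$t$ paths of length $\le\lambda$. Since $G$ is series-parallel, the augmentation property (Lemma~\ref{lem:path-cut}) lets me argue by induction on $i$ that the SPF decomposition realises $\Phi$ on every threshold. Concretely, after slices $1,\dots,i$ the flow is a maximum flow in the subgraph of paths of length $\le\tau^{(i)}$, because every residual path of that length was saturated by the maximality of $\xi^{(i)}$ in $H^{(i)}$. Hence $\sum_{j:\tau^{(j)}\le\lambda}|\xi^{(j)}|=\min\{\mu,\Phi(\lambda)\}$ for every $\lambda$.

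Next, I would show that in a series-parallel graph $\tau(G^{\xi,m+1})=\min\{\lambda:\Phi(\lambda)>\mu\}$. The key fact is that if a flow of value $\mu$ is maximum among paths of length $<\lambda$ but $\Phi(\lambda)>\mu$, then the augmentation property applied within the subgraph of paths of length $\le\lambda$ yields a forward residual path of length exactly $\lambda$. I would also show that $c^{\xi,m+1}(K)=\Phi(\lambda^*)-\mu$, where $\lambda^*$ is this minimum. For the upper bound, any residual flow on $K$ together with $\xi$ uses paths of length $\le\lambda^*$. For the lower bound, the saturation property lets me augment $\xi$, restricted to paths of length $\le\lambda^*$, to value $\Phi(\lambda^*)$ using only forward residual paths, and those paths lie in $K$. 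Both sides are intrinsic to $(G,\mu)$, which gives the claimed equalities for $\xi$ and $\zeta$.

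The main obstacle is proving that $\Phi$ is actually attained by forward-only augmentations preserving the existing slices. This fails in general graphs, where backward arcs would be needed, and it is exactly where series-parallelity enters through Lemma~\ref{lem:path-cut} and the augmentation property applied to the subgraph $G_{\le\lambda}$ consisting of the edges on paths of length $\le\lambda$. That subgraph is itself series-parallel, being a union of $s$-$t$ paths of a series-parallel graph. I would check this carefully, since not every $s$-$t$ path inside it need have length $\le\lambda$. I would handle this with a length-labelling argument showing that any forward augmenting path inside $G_{\le\lambda}$ that is needed can be chosen to have length $\le\lambda$, using the cut-intersection property.
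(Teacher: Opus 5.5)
Part (i) is fine and matches the paper's argument. Part (ii) has a genuine gap. $\Phi$ is the wrong invariant, and both claims you build on it are false, so the obstacle you flag at the end cannot be overcome. Let $G$ be the series connection of two parallel pairs, $a_1,a_2$ from $s$ to $w$ and $b_1,b_2$ from $w$ to $t$, all of capacity $1$, with $\tau_{a_1}=\tau_{b_1}=0$ and $\tau_{a_2}=\tau_{b_2}=1$, and let $\mu=1<c(G)=2$. The SPF decomposition is forced (one unit on $a_1b_1$), so the residual graph is $\{a_2,b_2\}$ and $\tau(G^{\xi,2})=2$. But $\Phi(0)=1$ and $\Phi(1)=2$ (one unit on each of $a_1b_2$ and $a_2b_1$), so $\min\{\lambda:\Phi(\lambda)>\mu\}=1$.

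Your key fact fails exactly here. The flow is maximum among paths of length $<1$ and $\Phi(1)>\mu$, yet $G_{\le 1}$ is all of $G$ and its only forward residual path is $a_2b_2$, of length $2$. Reaching $\Phi(1)$ requires moving the unit off $a_1b_1$, which no forward-only augmentation does, so no length-labelling argument can supply a residual path of length $1$.

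Your first step fails too, even at a slice threshold. Add an edge $d$ from $s$ to $t$ with $\tau_d=c_d=1$ and take $\mu=5/2<c(G)=3$. The slices are $a_1b_1$, $d$, $a_2b_2$, with $\tau^{(2)}=1$. After two slices only $2$ units lie on paths of length at most $1$, whereas $\Phi(1)=3$ and $\min\{\mu,\Phi(1)\}=5/2$. The underlying error is equating ``no forward residual path of length $\le\tau^{(i)}$'' with ``maximum among flows supported on paths of length $\le\tau^{(i)}$''. SPF is greedy with respect to cost, not with respect to the threshold profile.

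The paper never uses a global threshold quantity. It inducts on the series-parallel decomposition. In the series case, residual diameters add and residual shortest-path capacities combine by a minimum. In the parallel case, Corollary~\ref{cor:subgraph} applied to unequal splits $|\alpha(1)|>|\beta(1)|$ yields paths $A,B$ that pin both residual diameters to a common $\tau^*$.

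If you want an intrinsic characterisation in the spirit of your opening sentence, use the min-cost function $C(\lambda)=\min\{\sum_e\tau_e\eta_e : |\eta|=\lambda\}$ instead of $\Phi$. You would first have to show that every SPF prefix, truncated or not, is a minimum-cost flow; this is the forward-only phenomenon of \citet{BEIN1985}, and you would still need to establish it for arbitrary slice choices. Granting that, extending $\xi$ inside $K$ shows that $C$ has slope $\tau(K)$ on $[\mu,\mu+c^{\xi,m+1}(K)]$ and a strictly larger slope afterwards. Hence $\tau(G^{\xi,m+1})$ is the right derivative of $C$ at $\mu$, and $c^{\xi,m+1}(K)$ is the length of the linear piece of $C$ starting at $\mu$; both depend only on the instance.

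Your caution about $G_{\le\lambda}$ is well placed. The paper's parallel-case capacity step also takes $H$ to be the union of all paths of length at most $\tau^*$, and asserts that every residual $s$-$t$ path of $H$ lies in $K$. That fails in the example with $d$ after setting $c_{a_2}=5$, $c_{b_1}=2$ and $\mu=3/2$. There $H=G$ and $c(H)-\mu=5/2$, while the residual shortest-path capacity is $3/2$ whichever of $a_2b_1$ or $d$ carries the second slice.
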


\begin{proof}
{Statement (i) is instant from Corollary \ref{lem:path}. We prove (ii) by the inductive structure of series-parallel graphs. The base case, where $G$ consists of a single edge $st$, is trivial. Assume that $G$ is a series- or parallel-connection of two smaller series-parallel graphs $G_1$ and $G_2$, for which the lemma holds. }


\paragraph{Case 1: series-connection} {Every shortest $s$-$t$ path in $G$ is a series-connection of a shortest source-sink path in $G_1$ and a shortest source-sink path in $G_2$, and vice versa. For each $i=1,2,\ldots,m$, the shortest-path graph of $G^{\xi,i}$ is the series-connection of the shortest-path graphs of $G^{\xi,i}\cap G_1$ and $G^{\xi,i}\cap G_2$, and vice versa. By Lemma~\ref{lem:path-cut}, removing a source-sink flow from the network reduces the source-sink capacity by the value of the flow; equivalently, every source-sink flow can be augmented to be a maximum one (i.e., the augmentation property). It follows that, for $h=1,2$, there is an SPF decomposition $\alpha(h)$ of $(G_h,\mu)$ with dimension $m(h)$ such that $\alpha(h)_e=\xi_e$ for all $e\in G_h$, where $\alpha(h)=(\alpha(h)^{(k)}:k=1,\ldots,m(h))$ is a splitting-and-regrouping of the path flow restriction of $\xi$  to $G_h$. Therefore 
\[G^{\xi,m+1}\cap G_h=G_h^{\alpha(h),m(h)+1}\text{ and }c^{\xi,m+1}_e=c^{\alpha(h),m(h)+1}_e\text{ for all }e\in G^{\xi,m+1}\cap G_h.\]
Similarly, for $h=1,2$, there is an SPF decomposition $\beta(h)$ of $(G_h,\mu)$ with dimension $n(h)$ such that 
\[G^{\zeta,n+1}\cap G_h=G_h^{\beta(h),n(h)+1}\text{ and }c^{\zeta,n+1}_e=c^{\beta(h),n(h)+1}_e\text{ for all }e\in G^{\zeta,n+1}\cap G_h.\]
Since $\mu<c(G)=\min\{c(G_1),c(G_2)\}$, by induction hypothesis, for $h=1,2$, we have 
\[\tau(G_h^{\alpha(h),m(h)+1})=\tau(G_h^{\beta(h),n(h)+1})\text{ and }c^{\alpha(h),m(h)+1}(K_h)=c^{\beta(h),n(h)+1}(Z_h),\] where $K_h$ and $Z_h$ are the shortest-path subgraphs of $G_h^{\alpha(h),m(h)+1}=G^{\xi,n+1}\cap G_h$ and $G_h^{\beta(h),n(h)+1}=G^{\zeta,n+1}\cap G_h$, respectively. Thus, from 
$\tau(G^{\xi,m+1})=\tau(G^{\xi,m+1}\cap G_1)+\tau(G^{\xi,m+1}\cap G_2)=\tau(G^{\alpha(1),m(1)+1}_1)+\tau(G^{\alpha(2),m(2)+1}_2)$,
we derive
\begin{align*}
    \tau(G^{\xi,m+1})&=\tau(G^{\beta(1),n(1)+1}_1)+\tau(G^{\beta(2),n(2)+1}_2)=\tau(G^{\zeta,n+1}\cap G_1)+\tau(G^{\zeta,n+1}\cap G_2)=\tau(G^{\zeta,n+1}).
\end{align*}
Note that $K$ is series-connection of $K_1$ and $K_2$, and $Z$ is series-connection of $Z_1$ and $Z_2$. It follows from $
 c^{\xi,m+1}(K)=\min\{c^{\xi,m+1}(K_1),c^{\xi,m+1}(K_2)\} =\min\{c^{\alpha(1),m(1)+1}(K_1),c^{\alpha(2),m(2)+1}(K_2)\}$
that
\begin{align*}
 c^{\xi,m+1}(K)=\min\{c^{\beta(1),n(1)+1}(Z_1),c^{\beta(2),n(2)+1}(Z_2)=\min\{c^{\zeta,n+1}(Z_1),c^{\zeta,n+1}(Z_2)\}=c^{\zeta,n+1}(Z).
\end{align*}}

\paragraph{Case 2: parallel-connection} For $h=1,2$, let $\alpha(h)=(\alpha(h)^{(k)}:k=1,\ldots,m(h))$ be the restriction of $\xi$ to $G_h$.  Then $|\alpha(1)|+|\alpha(2)|=\mu$ and $\alpha(h)$ is an SPF decomposition of $(G_h,|\alpha(h)|)$ for $h=1,2$. {In case $\xi$'s support is entirely contained in some $G_h$, we treat $\alpha(3-h)$ with $|\alpha(3-h)|=0$ as a null SPF decomposition of $(G_{3-h},0)$, which corresponds to a zero $s$-$t$ flow in $G_h$. Symmetrically, let $\beta(h)=(\beta(h)^{(k)}:k=1,\ldots,n(h))$ be the restriction of $\zeta$ to $G_h$ for $h=1,2$. Then $|\beta(1)|+|\beta(2)|=\mu$ and $\beta(h)$ is an SPF decomposition of $(G_h,|\beta(h)|)$ for $h=1,2$. In case $\zeta$'s support is entirely contained in some $G_h$, we treat $\beta(3-h)$ with $|\beta(3-h)|=0$ as a null SPF decomposition of $(G_{3-h},0)$, which corresponds to a zero $s$-$t$ flow in $G_h$.

We first prove $\tau(G^{\xi,m+1})=\tau(G^{\zeta,n+1})$.
If $|\alpha(1)|=|\beta(1)|$, equivalently $|\alpha(2)|=|\beta(2)|$, then it is routine to check that the induction hypothesis on $(G_1,|\alpha(1)|)$ and $(G_2,|\alpha(2)|)$ yields $\tau(G^{\xi,m+1})=\tau(G^{\zeta,n+1})$. So we assume without loss of generality that $|\alpha(1)|>|\beta(1)|$, which implies $|\alpha(2)|<|\beta(2)|$. Let $\ell(\alpha)$ denote the length of the longest $s$-$t$ path in the support of $\alpha(1)$, and let $\ell(\beta)$ denote the length of the longest $s$-$t$ path in the support of $\beta(2)$.

By Algorithm~\ref{alg:flow}, the support of $\alpha(1)$ contains at least one $s$-$t$ path in each shortest-path subgraph of $G^{\xi,i}$, $i=1,\ldots,m$. Note that all $s$-$t$ paths in the same shortest-path subgraph have the same length. The property ensures that  $\alpha(1)$'s support contains at least one $s$-$t$ path of length $\ell(\alpha)=\tau(G^{\xi,m})$. Similarly, $\beta(2)$'s support contains at least one $s$-$t$ path of length $\ell(\beta)=\tau(G^{\zeta,n})$.}

By Corollary~\ref{cor:subgraph}, we deduce from $|\alpha(1)|>|\beta(1)|$ that the support of $\alpha(1)$ contains an $s$-$t$ path $A$ with  $c_e>\beta(1)_e$ for every edge $e\in A$. It follows that $G_1^{\beta(1),n(1)+1}=G^{\zeta,n+1}\cap G_1$ contains $A$ with length  \[\tau(A)\in[\tau(G^{\zeta,n+1}\cap G_1),\ell(\alpha)].\]
 Symmetrically, $|\beta(2)|>|\alpha(2)|$ implies that the support of $\beta(2)$ contains an $s$-$t$ path $B$ with  $c_e>\alpha(2)_e$ for every edge $e\in B$, and $G_2^{\alpha(2),m(2)+1}=G^{\xi,m+1}\cap G_2$ contains $B$ with length 
 \[\tau(B)\in[\tau(G^{\xi,m+1}\cap G_2),\ell(\beta)].\]
 Recall that the shortest-path-flow decomposition by Algorithm~\ref{alg:flow} prefers shorter paths to longer ones, saturating the minimum cuts of the current shortest-path subgraph before sending flow along longer paths in the next shortest-path subgraph. We have $\ell(\alpha)\le\tau(B)\le\ell(\beta)$ and $\ell(\beta)\le\tau(A)\le\ell(\alpha)$, saying that  $\ell(\alpha)$, $\tau(B)$, $\tau(A)$, $\ell(\beta)$ are equal to the same value, written as $\tau^*$.
 
 Moreover, recalling the non-decreasing property of the diameter sequence returned by Algorithm~\ref{alg:flow}, we have $\tau(G^{\xi,m+1}\cap G_1)\ge\tau(G^{\xi,m+1})\ge\tau(G^{\xi,m})=\ell(\alpha)=\tau^*$ (in case $G^{\xi,m+1}\subseteq G_2$, we define $\tau(G^{\xi,m+1}\cap G_1):=\infty)$ and $\tau^*=\ell(\alpha)\le\tau(G^{\xi,m+1}\cap G_2)\le\tau(B)=\tau^*$, which enforces
 \[\tau(G^{\xi,m+1})=\min\{\tau(G^{\xi,m+1}\cap G_1),\tau(G^{\xi,m+1}\cap G_2)\}=\tau(G^{\xi,m+1}\cap G_2)=\tau^*.\]
 Similarly,  $\tau(G^{\zeta,n+1}\cap G_2)\ge\tau(G^{\zeta,n+1})\ge\tau(G^{\zeta,n})=\ell(\beta)=\tau^*$ (where $\tau(G^{\zeta,n+1}\cap G_2):=\infty$ in case $G^{\zeta,n+1}\subseteq G_1)$) and $\tau^*=\ell(\beta)\le\tau(G^{\zeta,n+1}\cap G_1)\le\tau(A)=\tau^*$ enforce 
 \[\tau(G^{\zeta,n+1})=\min\{\tau(G^{\zeta,n+1}\cap G_1),\tau(G^{\zeta,n+1}\cap G_2)\}=\tau(G^{\zeta,n+1}\cap G_1)=\tau^*,\]
giving $\tau(G^{\xi,m+1})=\tau^*=\tau(G^{\zeta,n+1})$ as desired.

Let $H$ be the subgraph of $G$ consisting of all $s$-$t$ paths of length at most $\tau(G^{\xi,m+1})=\tau(G^{\zeta,n+1})=\tau(K)=\tau(Z)$. Note that $\xi$ and $\zeta$ also serve as SPF decompositions of $(H,\mu)$, and that $K\subseteq H^{\xi,m+1}$ and $Z\subseteq H^{\zeta,n+1}$. Because $K$ is the shortest-path subgraph of $G^{\xi,m+1}$, it is the shortest-path graph of $H^{\xi,m+1}(\subseteq G^{\xi,m+1})$. By the definition of $H$, the longest $s$-$t$ path in $H^{\xi,m+1}$ (if any) has length at most $\tau(K)$. Thus all $s$-$t$ paths in $H^{\xi,m+1}$ have identical length $\tau(K)$, and these paths are all contained in $K$, thereby constituting $K$. Hence $c^{\xi,m+1}(K)=c^{\xi,m+1}(H^{\xi,m+1})$. Similarly, $c^{\zeta,n+1}(Z)=c^{\zeta,n+1}(H^{\zeta,n+1})$. It follows from Lemma~\ref{lem:path-cut} that
\[c^{\xi,m+1}(K)=c^{\xi,m+1}(H^{\xi,m+1})=c(H)-\mu= c^{\zeta,n+1}(H^{\zeta,n+1})=c^{\zeta,n+1}(Z),\]
which completes the proof.
\qed
\end{proof}

\begin{corollary}\label{cor:step}
\redcomment{Let $\xi$ and $\zeta$ be two SPF decompositions of $(G,s,t,c,\tau,\mu)$. Suppose both have completed $j$ iterations and have routed the same amount
$q=\sum_{h=1}^j|\xi^{(h)}|=\sum_{h=1}^j|\zeta^{(h)}|$, where $q=0$ if $j=0$.
If $q=\mu$, both decompositions terminate; if also $q=c(G)$, neither residual network has an $s$-$t$ path. If $q<\mu$, both have a next iteration, and their residual shortest-path lengths and capacities coincide. Consequently, their next slice values $|\xi^{(j+1)}|$ and $|\zeta^{(j+1)}|$ coincide.}
\end{corollary}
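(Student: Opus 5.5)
The plan is to reduce Corollary~\ref{cor:step} to Lemma~\ref{lem:flow-invariant}. The key observation is that the first $j$ slices of an SPF decomposition are themselves a complete SPF decomposition of a smaller instance. For $\xi$, consider the truncated sequence $(\xi^{(1)},\ldots,\xi^{(j)})$ and the instance $(G,s,t,c,\tau,q)$. Running Algorithm~\ref{alg:flow} on this instance can reproduce exactly these slices. At iteration $h\le j$ the current residual graph is $G^{\xi,h}$, exactly as in the original run, so $H^{(h)}$ is the same shortest-path subgraph. The requested value is $q-\sum_{k<h}|\xi^{(k)}|=\mu^{(h)}-(\mu-q)$. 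In the original run, $|\xi^{(h)}|=\min\{\mu^{(h)},c(H^{(h)})\}$. For $h<j$ this minimum must equal $c(H^{(h)})$, since otherwise the original run would have terminated at step $h$. At $h=j$, $|\xi^{(j)}|$ is exactly the remaining amount $q-\sum_{k<j}|\xi^{(k)}|$, and this amount is at most $c(H^{(j)})$. Hence $\xi^{(h)}$ is a legal choice of slice at every step, and the truncated run terminates after exactly $j$ iterations with residual graph $G^{\xi,j+1}$ and residual capacities $c^{\xi,j+1}$. The same holds for $\zeta$. The degenerate case $j=0$, $q=0$ is handled directly, since both residuals are then $(G,c)$ itself.

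With this reduction the cases follow quickly. If $q=\mu$, then $\mu^{(j+1)}=0$ in both runs by Corollary~\ref{lem:path}, so both terminate. If in addition $q=c(G)$, part~(i) of Lemma~\ref{lem:flow-invariant}, applied to the instance $(G,q)$, shows that neither $G^{\xi,j+1}$ nor $G^{\zeta,j+1}$ has an $s$-$t$ path. If $q<\mu$, then $\mu^{(j+1)}=\mu-q>0$, so the repeat loop continues and both decompositions have an iteration $j+1$. Moreover $q<\mu\le c(G)$, so part~(ii) of Lemma~\ref{lem:flow-invariant}, applied to $(G,q)$, gives $\tau(G^{\xi,j+1})=\tau(G^{\zeta,j+1})$ and $c^{\xi,j+1}(K)=c^{\zeta,j+1}(Z)$. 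Here $K$ and $Z$ are the residual shortest-path subgraphs, which are exactly the graphs $H^{(j+1)}$ used in the next iteration of each run.

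It remains to compare the next slice values. By Algorithm~\ref{alg:flow}, $|\xi^{(j+1)}|=\min\{\mu-q,\,c^{\xi,j+1}(K)\}$ and $|\zeta^{(j+1)}|=\min\{\mu-q,\,c^{\zeta,j+1}(Z)\}$, and these coincide. One extra check is needed: the residual network must contain an $s$-$t$ path, so that $K$ is nonempty. This follows from Corollary~\ref{lem:path}, since the residual capacity equals $c(G)-q>0$.

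I expect the main obstacle to be the truncation argument, namely justifying that a prefix of an SPF decomposition is a genuine SPF decomposition of $(G,q)$. The delicate point is the last truncated slice $\xi^{(j)}$. It may have been a full maximum flow on $H^{(j)}$ in the original run, while in the truncated run it is the ``final'' slice that only meets the target. Both readings are admissible under the rule $|\xi^{(j)}|=\min\{\cdot,\cdot\}$, because the target was chosen to equal the amount actually routed. I would state this check explicitly and then invoke Lemma~\ref{lem:flow-invariant} without further work.
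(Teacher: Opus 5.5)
Your proposal is correct and follows essentially the same route as the paper. Like the paper, you treat each $j$-slice prefix as an SPF decomposition of $(G,q)$, apply Lemma~\ref{lem:flow-invariant}, and read off the next slice value as the minimum of $\mu-q$ and the common residual shortest-path capacity. You also spell out two points the paper leaves implicit: that the prefix is a legal run of Algorithm~\ref{alg:flow} on $(G,q)$, with $\xi^{(j)}$ reinterpreted as the final slice, and that the residual network still contains an $s$-$t$ path when $q<\mu$.
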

\begin{proof}
\redcomment{For $j=0$, the two residual instances are identical. For $j>0$, the two prefixes are SPF decompositions routing $q$ units. Apply Lemma~\ref{lem:flow-invariant}. If $q<\mu$, the next slice value is the minimum of the common residual shortest-path capacity and the common remaining amount $\mu-q$.}
\qed
\end{proof}

The selection of $s$-$t$ flow $\xi^{(i)}$ in Algorithm~\ref{alg:flow} is arbitrary, which may result in different SPF decompositions for the same instance. However, as the following corollary states, the capacity-diameter slicings associated with these SPF decompositions are unique.
\begin{corollary}\label{lem:uni}\label{cor:uni}
    For any instance $(G,s,t,c,\tau,\mu)$, the capacity-diameter slicing $(c^{(i)},\tau^{(i)})_{i=1}^m$ output by Algorithm~\ref{alg:flow}  is unique.
\end{corollary}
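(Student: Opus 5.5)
The plan is to derive Corollary~\ref{cor:uni} from Corollary~\ref{cor:step} by induction on the iteration index. Fix two SPF decompositions $\xi=(\xi^{(i)}:i=1,\ldots,m)$ and $\zeta=(\zeta^{(j)}:j=1,\ldots,n)$ of $(G,s,t,c,\tau,\mu)$, with slicings $(c^{(i)}_\xi,\tau^{(i)}_\xi)_{i=1}^m$ and $(c^{(j)}_\zeta,\tau^{(j)}_\zeta)_{j=1}^n$. I will show by induction on $j\ge 0$ that, as long as both have performed $j$ iterations, they have routed the same amount $q_j=\sum_{h\le j}|\xi^{(h)}|=\sum_{h\le j}|\zeta^{(h)}|$, and that the first $j$ slice pairs coincide.

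The base case $j=0$ is trivial, since $q_0=0$ and both residual instances equal the original one. For the inductive step, suppose the first $j$ pairs agree and $q_j$ is common. If $q_j=\mu$, Corollary~\ref{cor:step} says both decompositions terminate, so $m=n=j$. If $q_j<\mu$, both perform an iteration $j+1$, and Corollary~\ref{cor:step} gives equal residual shortest-path lengths and capacities. These are exactly $\tau^{(j+1)}$ and $c^{(j+1)}$, because the residual graph after $j$ iterations is $G^{\xi,j+1}$ (resp.\ $G^{\zeta,j+1}$) with residual capacities $c^{\xi,j+1}$, and $H^{(j+1)}$ is its shortest-path subgraph. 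Thus $(c^{(j+1)}_\xi,\tau^{(j+1)}_\xi)=(c^{(j+1)}_\zeta,\tau^{(j+1)}_\zeta)$. Corollary~\ref{cor:step} also gives $|\xi^{(j+1)}|=|\zeta^{(j+1)}|=\min\{\mu-q_j,c^{(j+1)}\}$, so $q_{j+1}$ is common and the induction continues.

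The process terminates with finite dimension, by the remark preceding Corollary~\ref{lem:path}. The induction therefore forces both decompositions to stop at the same first index where $q_j=\mu$. This gives $m=n$ and identical slicings, hence uniqueness.

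The main subtlety is making sure Corollary~\ref{cor:step} really applies at each step. The truncated prefixes $(\xi^{(1)},\ldots,\xi^{(j)})$ must be SPF decompositions of the instance with demand $q_j$, so that Lemma~\ref{lem:flow-invariant}(ii) can be used with $\mu$ replaced by $q_j$. This holds because in every iteration $h\le j$ the slice value satisfies $|\xi^{(h)}|\le c(H^{(h)})$. When $|\xi^{(h)}|<c(H^{(h)})$ the demand was exhausted, so $h=j$. Hence running Algorithm~\ref{alg:flow} on $(G,q_j)$ with the same choices reproduces the prefix. I also need to rule out the boundary case $q_j=c(G)<\mu$; it cannot occur, since $\mu\le c(G)$.
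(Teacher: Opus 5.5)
Your proposal is correct and is essentially the paper's argument: the paper only remarks that the corollary follows straightforwardly from Corollary~\ref{cor:step}, and your induction on the iteration index unpacks that remark. At each stage, equal routed amounts $q_j$ force equal next slice pairs $(c^{(j+1)},\tau^{(j+1)})$, equal slice values, and hence the same termination index. Your extra checks, that truncated prefixes are SPF decompositions of $(G,q_j)$ and that $q_j<\mu\le c(G)$ keeps Lemma~\ref{lem:flow-invariant}(ii) applicable, are already implicit in the paper's proof of Corollary~\ref{cor:step}.
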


By Corollary \ref{cor:step}, the proof of Corollary~\ref{lem:uni} is straightforward. Given the unique capacity-diameter slicing $(c^{(i)},\tau^{(i)})_{i=1}^m$ of instance $(G,s,t,c,\tau,\mu)$, we define 
\[\tau(G, c, \mu) := \tau^{(m)}\] 
as the length of the longest $s$-$t$ paths that carry some flow; this length is identical across all SPF decompositions of the instance.  For any $s$-$t$ subgraph $H$ of $G$, we write $\tau(H,c,\mu)$ instead of $\tau|_{E(H)}(H,c|_{E(H)},\mu)$ for notational convenience.

\begin{lemma}\label{lem:monotone}
\redcomment{Let $G$ be series-parallel, let $0<\mu<\lambda\le c(G)$, and let $H\subseteq G$ be an $s$-$t$ subgraph with $c(H)\ge\mu$. Then $\tau(G,c,\mu)\le\tau(G,c,\lambda)$ and $\tau(G,c,\mu)\le\tau(H,c,\mu)$.}
\end{lemma}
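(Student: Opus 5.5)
For the first inequality, $\tau(G,c,\mu)\le\tau(G,c,\lambda)$, I would use the prefix structure of Algorithm~\ref{alg:flow} together with Corollary~\ref{cor:step}. Run the algorithm on $(G,\lambda)$ to get an SPF decomposition $\zeta=(\zeta^{(j)})_{j=1}^n$. Let $k$ be the first index with $\sum_{h\le k}|\zeta^{(h)}|\ge\mu$. Truncate $\zeta$ at iteration $k$: keep $\zeta^{(1)},\dots,\zeta^{(k-1)}$, and replace $\zeta^{(k)}$ by a scaled copy of value $\mu-\sum_{h<k}|\zeta^{(h)}|$. This is a legitimate run of Algorithm~\ref{alg:flow} on $(G,\mu)$. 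Indeed, the residual graphs $G^{\zeta,i}$ for $i\le k$ are the same in both runs, since they are determined by the earlier slices. Also, the $k$-th slice in the $\mu$-run only needs value $\min\{\mu^{(k)},c(H^{(k)})\}=\mu^{(k)}$, and any subflow of $\zeta^{(k)}$ supported on $H^{(k)}$ is admissible. Hence $\tau(G,c,\mu)=\tau^{(k)}\le\tau^{(n)}=\tau(G,c,\lambda)$, using Corollary~\ref{cor:uni} and the non-decreasing diameter sequence.

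For the second inequality, $\tau(G,c,\mu)\le\tau(H,c,\mu)$, I would argue through threshold subgraphs. For $L\ge0$, let $G_L$ be the subgraph of $G$ formed by the union of all $s$-$t$ paths of length at most $L$. The key claim is the characterization
\[
\tau(G,c,\mu)=\min\{L : c(G_L)\ge\mu\}.
\]
To prove the claim, let $\tau^*=\tau(G,c,\mu)$ and take any SPF decomposition $\xi$. Every path it uses has length at most $\tau^*$, so $\xi$ is an $s$-$t$ flow of value $\mu$ in $G_{\tau^*}$, which gives $c(G_{\tau^*})\ge\mu$. For the reverse direction, suppose $L<\tau^*$. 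Let $i$ be the last slice with $\tau^{(i)}\le L$, so $i<m$ and $\tau^{(i+1)}>L$. Since the shortest path in $G^{\xi,i+1}$ has length $\tau^{(i+1)}>L$, the residual network has no $s$-$t$ path inside $G_L$. The flow $\sum_{h\le i}\xi^{(h)}$ therefore saturates an $s$-$t$ cut of $G_L$, and by the saturation property (Lemma~\ref{lem:path-cut}) it is maximum in $G_L$. Its value is $\sum_{h\le i}|\xi^{(h)}|<\mu$, because slice $i+1$ still exists, so $c(G_L)<\mu$.

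With the claim in hand, the inequality follows quickly. Apply the characterization to $H$ as well; this is valid since $H$ is an $s$-$t$ subgraph of a series-parallel graph and hence series-parallel, and $c(H)\ge\mu$. Because $H_L\subseteq G_L$, we get $c(H_L)\le c(G_L)$. So any $L$ with $c(H_L)\ge\mu$ also satisfies $c(G_L)\ge\mu$, which gives $\tau(G,c,\mu)\le\tau(H,c,\mu)$. The same characterization also yields the first inequality immediately, since $\{L: c(G_L)\ge\lambda\}\subseteq\{L: c(G_L)\ge\mu\}$, so the truncation argument above serves only as a cross-check.

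The main obstacle I expect is the reverse direction of the characterization. It requires that, once the residual network restricted to $G_L$ has no $s$-$t$ path, the accumulated flow is maximum in $G_L$. This is exactly where the forward-only augmentation and saturation properties of series-parallel graphs are needed, because in general graphs backward residual arcs would break the argument. I would also check carefully that the residual network on $G_L$ coincides with $G^{\xi,i+1}\cap G_L$, i.e. that the earlier slices lie entirely inside $G_L$; this holds because each of those slices uses only paths of length $\tau^{(h)}\le L$.
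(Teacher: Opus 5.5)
\textbf{Verdict: the proposal has a genuine gap in the second inequality; the first inequality is proved correctly.}

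Your truncation argument for $\tau(G,c,\mu)\le\tau(G,c,\lambda)$ is correct. For $h<k$ one has $|\zeta^{(h)}|=c(H^{(h)})<\mu^{(h)}$, so the early slices are admissible in the $\mu$-run. This is the mirror image of the paper's augmentation argument.

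The second inequality, however, rests on the identity $\tau(G,c,\mu)=\min\{L: c(G_L)\ge\mu\}$. This identity is false for general series-parallel graphs, already for a chain of two BPAs. Take vertices $s,v,t$, parallel edges $a_1,a_2$ from $s$ to $v$ and $b_1,b_2$ from $v$ to $t$, all of capacity $1$, with $\tau_{a_1}=\tau_{b_1}=0$, $\tau_{a_2}=\tau_{b_2}=1$, and $\mu=2$. Algorithm~\ref{alg:flow} is forced to send one unit along $a_1b_1$ and then one unit along $a_2b_2$, so $\tau(G,c,2)=2$. Yet the paths of length at most $1$ are $a_1b_1,a_1b_2,a_2b_1$, whose union is all of $G$. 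Hence $c(G_1)=2$ and your minimum equals $1$. The direction that fails, $\tau(G,c,\mu)\le\min\{L:c(G_L)\ge\mu\}$, is exactly the one your final comparison with $H$ needs.

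The broken step is ``the residual network has no $s$-$t$ path inside $G_L$''. A union of paths of length at most $L$ can contain longer $s$-$t$ paths. Here $a_2b_2$ is spliced from the prefix of $a_2b_1$ and the suffix of $a_1b_2$. So the absence of residual paths of length at most $L$ says nothing about residual paths in $G_L$. Restricting to path flows on paths of length at most $L$ does not help either, since $a_1b_2$ and $a_2b_1$ already carry value $2$.

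The underlying issue is that the greedy forward-only terminal diameter is not a global length threshold. Under series composition it is additive: $\tau(G,c,\mu)=\tau(G_1,c,\mu)+\tau(G_2,c,\mu)$. This is why the paper proceeds by induction on the series-parallel decomposition:
\begin{itemize}
\item additivity handles the series case;
\item the parallel case needs the split argument, which shows $\tau(G_2,c,\mu_2')\ge\tau(G_1,c,\lambda_1)$ for all $\mu_2'>\lambda_2$ and then compares $H$'s split $(\mu_1,\mu_2)$ with $\xi$'s split $(\lambda_1,\lambda_2)$.
\end{itemize}

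Your threshold argument is sound when $G$ is a single BPA. There every $s$-$t$ path is determined by its first edge, so the union of paths of length at most $L$ contains no longer path. Combined with a proof of series additivity, it would cover chains of BPAs, but it does not give the lemma for arbitrary series-parallel $G$.
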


\begin{proof} Let $\xi$ be an SPF decomposition of $(G,s,t,c,\tau,\mu)$ with $\mathrm{dim}(\xi)=m$. If $m=1$, then $\tau(G,c,\mu)=\tau(G)\le\tau(G,c,\lambda)$. If $m\ge2$, then by Algorithm \ref{alg:flow} and Lemma~\ref{lem:path-cut} (the augmentation property), there is an SPF decomposition $\zeta$ of $(G,\lambda)$ with $\mathrm{dim}(\zeta)\ge m$ such that $\zeta^{(i)}=\xi^{(i)}$ for $i=1,\ldots,m-1$ and $\zeta^{(m)}$ is an augmentation of  $\xi^{(m)}$ with $\tau(\xi^{(m)})=\tau(\zeta^{(m)})$. By the non-decreasing property of the diameter sequence (recalling Algorithm \ref{alg:flow}), we have $\tau(\zeta^{(m)})\le\tau(G,c,\lambda)$, yielding $ \tau(G,c,\mu)=\tau(\xi^{(m)})\le\tau(G,c,\lambda)$.

Next, we prove $\tau(G,c,\mu)\le \tau(H,c,\mu)$ by graphical induction on $G$. The base case (where $G$ consists of a single edge) is obvious. Suppose that $G$ is a series- or parallel-connection of two smaller series-parallel graphs $G_1$ and $G_2$, for which the inequality holds

In the case of series-connection, the $s$-$t$ subgraph $H$ is a series-connection of two series-parallel graphs $H\cap G_i$, $i=1,2$. By the induction hypothesis $\tau(G_i,c,\mu)\le \tau(H\cap G_i,c,\mu)$, $i=1,2$, we obtain $\tau(G,c,\mu)=\tau(G_1,c,\mu)+\tau(G_2,c,\mu)\le\tau(H\cap G_1,c,\mu)+\tau(H\cap G_2,c,\mu)=\tau(H,c,\mu)$. In the case of parallel-connection, we observe that $H_i=H\cap G_i$ ($i\in{1,2}$) is either an $s$-$t$ series-parallel graph, or an edgeless graph on $\{s,t\}$. In the latter scenario, we specially define $\mu_i:=0$ \purplecomment{and} $\mu_{3-i}=\mu$. Moreover, for any $i=1,2$, both symbols $\tau(G_i,c,0)$ and $\tau(H_i,c,0)$ represent value $0$. In any case, $H$ is the edge-disjoint union of $H_1$ and $H_2$, and there exist nonnegative $\mu_1$ and $\mu_2$ such that $\mu_1+\mu_2=\mu$ and
\[
\tau(H,c,\mu)=\max\{\tau(H_1,c,\mu_1),\tau(H_2,c,\mu_2)\}\ge \max\{\tau(G_1,c,\mu_1),\tau(G_2,c,  \mu_2)\},
\] 
where  $\tau(G_i, c, \mu_i) \leq \tau(H_i, c, \mu_i)$, $i = 1, 2$, as guaranteed by the induction hypothesis.

Suppose without loss of generality that $\tau(G,c,\mu)=\tau(G_1,c,\lambda_1)\ge \tau(G_2,c,\lambda_2)$, where $\lambda_i$ is the flow value of $\xi$'s restriction to $G_i$ for $i=1,2$, and $\lambda_1+\lambda_2=\mu$. Assume that $\tau(G_2,c,\lambda_2+\epsilon)<\tau(G_1,c,\lambda_1)$ for some \redcomment{$0<\epsilon\le c(G_2)-\lambda_2$}. 
The restriction of $\xi$ to $G_2$ is an SPF decomposition of $(G_2,c,\lambda_2)$, denoted by $\eta$.  It follows that $G_2^{\eta,\mathrm{dim}(\eta)+1}$ has an $s$-$t$ path of length at most $\tau(G_2,c,\lambda_2+\epsilon)<\tau(G_1,c,\lambda_1)=\tau(G,c,\mu)$. When $\lambda_2=0$, for the empty decomposition $\eta$, symbol $G_2^{\eta,\mathrm{dim}(\eta)+1}$ stands for $G_2$. This path is also contained in $G^{\xi,m+1}$, whose restriction to $G_2$ is $G_2^{\eta,\mathrm{dim}(\eta)+1}$. It follows that $\tau(G^{\xi,m+1})\le\tau(G_2,c,\lambda_2+\epsilon)<\tau(G,c,\mu)$, giving a contradiction to  $\tau(G,c,\mu)=\tau(G^{\xi,m})\le\tau(G^{\xi,m+1})$. Thus
\begin{center}
    $\tau(G_2,c,\mu'_2)\ge\tau(G_1,c,\lambda_1)$ for all $\mu'_2\in(\lambda_2,c(G_2)]$.
\end{center} 
If $\mu_1\ge \lambda_1$, then $\tau(H,c,\mu)\ge \tau(G_1,c,\mu_1)\ge \tau(G_1,c,\lambda_1)=\tau(G,c,\mu)$, where the second inequality is implied the first part of the lemma. If $\mu_1< \lambda_1$, then 
$\mu_2>\lambda_2$ and $\tau(H,c,\mu)\ge \tau(G_2,c,\mu_2)\ge \tau(G_1,c,\lambda_1)=\tau(G,c,\mu)$, establishing the second part of the lemma.
\qed
    \end{proof}

\redcomment{The preceding results concern numerical invariants on general series-parallel graphs. They imply that the terminal SPF diameter cannot decrease when more flow is routed or edges are deleted; they do not identify path allocations, supports, or bottleneck cuts. For the rest of this subsection, $G$ is a BPA. Its unique-suffix property makes the upstream branches laminar, allowing us to uncross cuts and obtain a canonical rightmost cut.}
Given an instance $(G,s,t,{c}, \tau,\mu)$ in which $G=(V,E)$ be a BPA, let $F_1$ and $F_2$ be two minimum $s$-$t$ cuts in $G$. By Lemma~\ref{lem:path-cut}, all edges in $F_1$ are parallel, so are all edges in $F_2$. For any distinct edges $e_1,e_2\in F_1\cup F_2$ such that $e_1\prec e_2$, we have $|\{e_1,e_2\}\cap F_i|=1$ for $i=1,2$.

\begin{lemma}\label{lem:rightmost}
    If $G$ is a BPA, then $F:=(F_1\cap F_2)\cup\{e\in F_1\triangle F_2\mid \text{there exists }e'\in F_1\triangle F_2\text{ such that }e'\prec e\}$ is a minimum $s$-$t$ cut of $(G,c)$.
\end{lemma}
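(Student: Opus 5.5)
Write $S_i,T_i$ for the vertex partitions that define the minimum (hence minimal) cuts $F_i$. By Lemma~\ref{lem:path-cut}, $F_i$ is exactly the set of edges from $S_i$ to $T_i$, and no edge goes from $T_i$ back to $S_i$. The plan is to build the ``downstream'' uncrossed cut $S:=S_1\cup S_2$, $T:=T_1\cap T_2$, and argue in three steps: its edge set is exactly $F$; it is a cut; and its capacity is minimum.

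\textbf{Identifying the edges.} An edge $uv$ goes from $S$ to $T$ iff $v\in T_1\cap T_2$ and $u\in S_1\cup S_2$.
\begin{itemize}
\item If $u\in S_1\cap S_2$, then $uv\in F_1\cap F_2$.
\item If $u\in S_1\setminus S_2$, then $uv\in F_1\setminus F_2$ and $u\in T_2$. Consider the unique $u$-$t$ suffix together with any $s$-$u$ path. This $s$-$t$ path meets $F_2$ in exactly one edge (Lemma~\ref{lem:path-cut}), and since $u\in T_2$ that edge lies before $u$. So some $e'\in F_2$ satisfies $e'\prec uv$. We also have $e'\notin F_1$, because the path meets $F_1$ only at $uv$. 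Hence $e'\in F_1\triangle F_2$ and $uv\in F$.
\end{itemize}
Conversely, take $e=uv\in F$.
\begin{itemize}
\item If $e\in F_1\cap F_2$, then clearly $u\in S$ and $v\in T$.
\item If $e\in F_1\setminus F_2$ with some $e'\in F_1\triangle F_2$, $e'\prec e$, then $e'\in F_2$, since edges of $F_1$ are pairwise parallel. Then $v\in T_1$, and $v$ lies after $e'$ on a path, so $v\in T_2$, using that no edge returns from $T_2$ to $S_2$. Thus $v\in T$, and $u\in S_1\subseteq S$.
\end{itemize}
This step uses the BPA structure in an essential way: the unique suffix from $u$ makes ``$\prec$'' a tree order, so the comparable pairs noted before the lemma are exactly the crossing pairs.

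\textbf{Cut property.} The set $F$ is an $s$-$t$ cut because $s\in S$ and $t\in T$. It is also minimal in the sense of Lemma~\ref{lem:path-cut}, since no edge goes from $T$ to $S$: such an edge would go from $T_1$ to $S_1$ or from $T_2$ to $S_2$.

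\textbf{Minimum capacity.} Let $F'$ be the companion cut $(S_1\cap S_2,\,T_1\cup T_2)$. Submodularity of the directed cut function gives
\[
c(F)+c(F')\le c(F_1)+c(F_2)=2c(G).
\]
Since both $F$ and $F'$ are $s$-$t$ cuts, each has capacity at least $c(G)$, so both have capacity exactly $c(G)$ and $F$ is minimum. To keep the argument self-contained rather than quoting submodularity, I would verify the inequality through the earlier observation that every comparable pair $e_1\prec e_2$ in $F_1\cup F_2$ has one edge in each $F_i$. Grouping edges this way shows that $F$ and $F'$ together use each edge of $F_1\cap F_2$ twice and each edge of $F_1\triangle F_2$ at most once.

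\textbf{Main obstacle.} The delicate step is identifying the edges: showing that every edge of $F_1\triangle F_2$ with no predecessor is excluded, i.e.\ that its head lies in $S$. I expect to handle this by showing that such an edge $e\in F_1\setminus F_2$ lies upstream of an $F_2$-edge along its unique suffix, so its head is in $S_2$. The justification is the same exactly-one-intersection argument: the suffix path through $e$ meets $F_2$ exactly once, and it cannot do so before $e$, because that would make the $F_2$-edge a predecessor of $e$, contradicting the assumption.
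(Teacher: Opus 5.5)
Your proof is correct, but it takes a different route from the paper's. The paper works purely with edges and the order $\prec$. It shows $F$ is a cut because every $s$-$t$ path meets $F_1$ and $F_2$ once each, and the later of the two hits lies in $F$. It then gets $c(F)\le c(F_1)$ by an exchange argument: each $e\in F\cap(F_2\setminus F_1)$ is replaced in $F_2$ by the set $A_e$ of $F_1$-edges preceding it, so minimality of $F_2$ gives $c_e\le c(A_e)$, and the unique-suffix property makes the $A_e$ disjoint with union $F_1\setminus F$. You instead identify $F$ as $\delta^+(S_1\cup S_2)$ and apply the standard uncrossing (submodularity) inequality. This also yields the companion minimum cut $\delta^+(S_1\cap S_2)$ and shows that $F$ has no backward edges.

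Contrary to your remark, your identification step never actually uses the BPA structure. In proving $\delta^+(S)\subseteq F$, any $v$-$t$ path can replace the unique suffix. The reverse inclusion uses only that edges of a minimal cut are pairwise parallel (Lemma~\ref{lem:path-cut}) and that no edge returns from $T_2$ to $S_2$. So your argument proves the statement for all series-parallel graphs. By contrast, the paper's disjointness of the $A_e$ genuinely needs the BPA property: in a general series-parallel graph one $F_1$-edge can precede several $F_2$-edges.

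Two small simplifications are available. First, your ``main obstacle'' is already settled by the inclusion $\delta^+(S)\subseteq F$. Second, the elementary check of the submodular inequality is easiest via vertex sets rather than comparable pairs. Both $F$ and $F'$ lie in $F_1\cup F_2$ and contain $F_1\cap F_2$. Any edge of $F\cap F'$ has tail in $S_1\cap S_2$ and head in $T_1\cap T_2$, so it lies in $F_1\cap F_2$.
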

\begin{proof}
    We first prove that $F$ is an $s$-$t$ cut of $G$. If not, there exists an $s$-$t$ path $P$ such that $P\cap F=\emptyset$. Since $F_1, F_2$ are two $s$-$t$ cuts, there must be $\{e_1\}=P\cap F_1$ and $\{e_2\}=P\cap F_2$. If $e_1=e_2$, we have $e_1\in F_1\cap F_2\subseteq F$, thus $e_1\in P\cap F$, yielding a contradiction. If $e_1\neq e_2$, by Lemma\ref{lem:path-cut}, we will have $e_1\in F_1\setminus F_2$ and $e_2\in F_2\setminus F_1$, thus $e_1, e_2\in F_1\triangle F_2 $. Without loss of generality, suppose that $e_1\prec e_2$, thus we have $e_2\in P\cap F$, yielding a contradiction. Therefore, $F$ is a $s$-$t$ cut of $G$.

   We next prove that $F$ is a minimum cut. By the definition of $F$, for each $e\in F$ either $e\in F_1$ or $e\in F_2\setminus F_1$ and there exists $e_1\in F_1$ such that $e_1\prec e$. 
    \begin{align*}
        c(F)&=\sum_{e\in F\cap F_1}c_e+\sum_{e\in F\cap (F_2\setminus F_1)}c_e \\ &\le \sum_{e\in F\cap F_1}c_e+\sum_{e\in F_1\setminus F}c_e=c(F_1)\\
    \end{align*}
    \redcomment{To justify the inequality, for each $e\in F\cap(F_2\setminus F_1)$ let $A_e$ consist of the edges of $F_1$ preceding $e$. BPA structure makes these sets disjoint, with union $F_1\setminus F$. Replacing $e$ in $F_2$ by $A_e$ still gives an $s$-$t$ cut: every path through $e$ meets $F_1$ before $e$, since a later meeting would, by the unique suffix, contradict the presence of an earlier $F_1$ edge. Minimality of the capacity of $F_2$ gives $c_e\le c(A_e)$. Summing gives the displayed inequality, and hence $F$ is a minimum cut.}
\qed
\end{proof}

By Lemma \ref{lem:rightmost}, there is a unique minimum $s$-$t$ cut in $(G,c)$, denoted as $\mathitsc{R}(G)$, such that for any edge $e$ in any minimum $s$-$t$ cut of $G$, either $e$ or an ancestor of $e$ belongs to $\mathitsc{R}(G)$. We call $\mathitsc{R}(G)$ the \emph{rightmost minimum $s$-$t$ cut} of $(G,c)$ or $G$ (when $c$ is clear from the context). Let $E_1$ and $E_2$ be two subsets of $E$. We say that $E_2$ is \emph{on the right} of $E_1$ if no edge of $E_2$ is a descendant of some edge of $E_1$.


\redcomment{For a flow $\eta$ that is maximum on its support (with the original capacities $c$), let $\mathitsc R(\eta)$ denote the rightmost minimum cut of that support. Equivalently, it consists of the $\eta$-saturated edges with no strictly downstream $\eta$-saturated edge. We use this notation only when the support contains a saturated cut.}

\redcomment{For the following lemma only, we use the \emph{saturated-final-slice convention}: if Algorithm~\ref{alg:flow} truncates its last slice, augment that slice to a maximum flow in its residual shortest-path graph. Earlier slices and the terminal diameter are unchanged; the completed total can exceed the requested amount $\mu$. We slightly abuse the notations and write
\[
 X_i:=\sum_{h=1}^i\xi^{(h)},\qquad Z_i:=\sum_{h=1}^i\zeta^{(h)}.
\]
These are cumulative static flows, not individual slices. Corollary~\ref{cor:uni} gives $|X_i|=|Z_i|$ under the same convention. The next lemma strengthens this numerical invariance to equality of canonical cuts, even though the flows and their supports may differ.}

\begin{lemma}\label{lem:cutr}
\redcomment{Let $\xi$ and $\zeta$ be two SPF decompositions of a BPA instance with common dimension $m$, using the saturated-final-slice convention above. Then
$\mathitsc R(X_i)=\mathitsc R(Z_i)$ for every $1\le i\le m$.}
\end{lemma}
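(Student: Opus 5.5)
We prove the statement by induction on $i$, and we identify $\mathitsc R(X_i)$ intrinsically, without reference to the particular decomposition. Under the saturated-final-slice convention, each cumulative flow $X_i$ is a maximum flow on its support. Let $G_{\le\tau^{(i)}}$ denote the subgraph of $G$ formed by all $s$-$t$ paths of length at most $\tau^{(i)}$. By Corollary~\ref{cor:uni} the slicing $(c^{(h)},\tau^{(h)})$ is common to $\xi$ and $\zeta$, so this subgraph is the same for both decompositions. The aim is to show that $\mathitsc R(X_i)$ depends only on $(G,c,\tau)$ and $i$. Concretely, we will show it equals the set of edges $e$ that are \emph{tight at level $i$} in an intrinsic sense: $e$ lies on some path of length $\le\tau^{(i)}$, $e$ is saturated by every maximum-type flow routing $|X_i|$ along paths of length $\le\tau^{(i)}$, and $e$ has no ancestor with the same property.

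\textbf{Step 1 (residual exhaustion).} By construction, the residual graph $G^{\xi,i+1}$ contains no $s$-$t$ path of length $\le\tau^{(i)}$. By Lemma~\ref{lem:path-cut}, in the subgraph $G_{\le\tau^{(i)}}$ the flow $X_i$ restricted there saturates an $s$-$t$ cut. By the saturation property it is therefore a maximum flow of $(G_{\le\tau^{(i)}},c)$, with value $c(G_{\le\tau^{(i)}})$; the same holds for $Z_i$. Both $X_i$ and $Z_i$ are supported inside $G_{\le\tau^{(i)}}$, since every slice uses paths of length $\le\tau^{(i)}$.

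\textbf{Step 2 (cuts of the support versus cuts of $G_{\le\tau^{(i)}}$).} Let $S_X$ be the support of $X_i$. Every minimum cut of $S_X$ has capacity $|X_i|=c(G_{\le\tau^{(i)}})$, so it is saturated by $X_i$. I would show that $\mathitsc R(X_i)$ coincides with the rightmost minimum cut $\mathitsc R(G_{\le\tau^{(i)}})$ of the BPA $G_{\le\tau^{(i)}}$. A subgraph of a BPA that is a union of $s$-$t$ paths is again a BPA, so Lemma~\ref{lem:rightmost} applies to it.

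For one direction, any minimum cut $F$ of $G_{\le\tau^{(i)}}$ is saturated by $X_i$, because $X_i$ is maximum there. Intersected with $S_X$, $F$ therefore gives a saturated cut of $S_X$. Hence every edge of $F$ that carries flow is dominated, in the $\prec$ order, by an edge of $\mathitsc R(X_i)$.

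For the other direction, take $\mathitsc R(X_i)$, which consists of the saturated edges of $S_X$ with no downstream saturated edge. We extend it to a cut of $G_{\le\tau^{(i)}}$. Consider any path $P$ of $G_{\le\tau^{(i)}}$ avoiding $\mathitsc R(X_i)$. Because $P$ has a unique suffix to $t$, it merges into a branch of the arborescence. If $P$ reached $t$ unsaturated, it would give a residual path of length $\le\tau^{(i)}$, contradicting Step 1. So $P$ contains a saturated edge not in $S_X$; such an edge must have capacity $0$, which is impossible, or it lies upstream. Using the laminar structure, I expect to show that $\mathitsc R(X_i)$ is itself a minimum cut of $G_{\le\tau^{(i)}}$ and is rightmost there.

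\textbf{Step 3 (conclusion).} Steps 1 and 2 give $\mathitsc R(X_i)=\mathitsc R(G_{\le\tau^{(i)}})=\mathitsc R(Z_i)$, since the middle term is independent of the decomposition.

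The main obstacle is Step 2. An edge of $\mathitsc R(G_{\le\tau^{(i)}})$ may carry no flow under $X_i$ only if there is slack elsewhere, and one must rule out that a min cut of the support sits strictly to the left of the rightmost min cut of $G_{\le\tau^{(i)}}$. My first attempt is the exchange argument from Lemma~\ref{lem:rightmost}. Given an edge $e\in\mathitsc R(G_{\le\tau^{(i)}})$, the set $A_e$ of its descendants in $\mathitsc R(X_i)$ forms a cut of the subtree above $e$. Maximality of $X_i$ then forces $c_e=X_{i,e}=c(A_e)$. From this, $e$ is saturated, and by rightmostness it belongs to $\mathitsc R(X_i)$. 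If this fails, I would fall back on induction on $i$, using $\mathitsc R(X_{i-1})=\mathitsc R(Z_{i-1})$ and the fact that slice $i$ augments only inside the residual shortest-path graph.
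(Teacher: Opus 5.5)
Your overall route matches the paper's. Step~1 is its observation that both prefix flows are maximum in $K_i$ (your $G_{\le\tau^{(i)}}$). Step~2 is its Claim that a maximum flow $\eta$ in a BPA $K$ satisfies $\mathitsc R(\eta)=\mathitsc R(K)$. However, Step~2 carries all the content and you never prove it: the ``other direction'' ends with ``I expect to show''. The reasoning offered there is confused. A path avoiding $\mathitsc R(X_i)$ does not produce a saturated edge outside $S_X$, since every saturated edge has $X_{i,e}=c_e>0$ and so lies in $S_X$. The fallback exchange argument also does not work. If $e\in\mathitsc R(G_{\le\tau^{(i)}})$, then $e$ is saturated by the maximum flow $X_i$. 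So no edge of $\mathitsc R(X_i)$ can precede $e$, because such an edge would have the saturated edge $e$ strictly downstream of it. Your set $A_e$ is therefore empty and the exchange gives nothing. Likewise, the worry that an edge of $\mathitsc R(G_{\le\tau^{(i)}})$ might carry no flow is unfounded: every min-cut edge is saturated by a maximum flow.

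The missing idea is one you already used, without stating it, in your ``one direction''. On any $s$-$t$ path $P$ of $G_{\le\tau^{(i)}}$, the \emph{last} $X_i$-saturated edge $e$ of $P$ belongs to $\mathitsc R(X_i)$. This holds because in a BPA every edge downstream of $e$ lies on the unique suffix of $P$ after $e$.

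With that fact, Step~2 follows:
\begin{itemize}
\item Since $X_i$ is maximum on $G_{\le\tau^{(i)}}$, every such path contains a saturated edge. Hence $\mathitsc R(X_i)$ is an $s$-$t$ cut of $G_{\le\tau^{(i)}}$.
\item Its edges are pairwise incomparable by definition, so each path meets it exactly once.
\item Flow conservation then gives $c(\mathitsc R(X_i))=\sum_{e\in\mathitsc R(X_i)}X_{i,e}=|X_i|=c(G_{\le\tau^{(i)}})$, so it is a minimum cut.
\item Your ``one direction'' shows that every edge of every minimum cut lies in or precedes an edge of $\mathitsc R(X_i)$. By the uniqueness of the rightmost cut from Lemma~\ref{lem:rightmost}, this forces $\mathitsc R(X_i)=\mathitsc R(G_{\le\tau^{(i)}})$.
\end{itemize}
No induction on $i$ is needed, and Step~3 then goes through as you wrote it.
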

\begin{proof}
\redcomment{We first isolate the path-and-cut fact that makes the comparison possible.}

\redcomment{\emph{Claim.} If $K$ is a BPA and $\eta$ is a maximum flow in $(K,c)$, then $\mathitsc R(\eta)=\mathitsc R(K)$.}

\redcomment{To prove the claim, let $F$ consist of the $\eta$-saturated edges having no strictly downstream saturated edge. Every $s$-$t$ path in $K$ contains a saturated edge, since otherwise $\eta$ could be augmented along it. The last saturated edge of such a path belongs to $F$: its unique suffix contains every downstream edge. Thus $F$ is a cut. Its edges are pairwise incomparable, so each $s$-$t$ path meets it exactly once, and flow conservation gives $c(F)=|\eta|$. Hence $F$ is a minimum cut. Every minimum cut of $K$ is saturated by $\eta$; each of its edges therefore belongs to $F$ or precedes an edge of $F$. Consequently $F=\mathitsc R(K)$. Saturated edges carry positive flow, and flow conservation together with the unique suffix keeps every edge after them in the support. The set $F$ is therefore also the rightmost saturated cut of the support. This proves the claim.}

\redcomment{Now fix $i$, and let $K_i$ be the union of all $s$-$t$ paths of free-flow length at most $\tau^{(i)}$. This network depends only on the common slicing, not on either flow decomposition. In a BPA, a path is fixed by its first edge and the unique suffix thereafter; taking this union creates no additional, longer paths. Both $X_i$ and $Z_i$ are feasible flows in $K_i$. After the completed $i$th slice, no path of length at most $\tau^{(i)}$ has positive residual capacity: earlier shorter paths have already been exhausted, and the current shortest-path graph receives a maximum residual flow. By the augmentation property, both prefix flows are maximum in $K_i$. Applying the claim twice yields
\[
\mathitsc R(X_i)=\mathitsc R(K_i)=\mathitsc R(Z_i).
\]
This argument compares cuts in a common prefix network and does not infer edge saturation from equality of total flow values alone.}
\qed
\end{proof}

Given any $s$-$t$ path $P$ in $G$ and a nonnegative real number $r\le c(P)$, let $H$ be the subgraph of $G$ induced by $E\setminus\{e\in P:r=c_e\}$ and capacity $d\in\mathbb R^{E(H)}_{++}$ be defined by $d_e:=c_e$ if $e\not\in P$ and $d_e=c_e-r$ otherwise.

\begin{lemma}\label{lem:BPAcut1}
If $G$ is a BPA and $H$ is an $s$-$t$ subgraph of $G$, then the rightmost minimum $s$-$t$ cut $\mathitsc{R}(G)$ of $(G,c)$ is on the right of the rightmost minimum $s$-$t$ cut $\mathitsc{R}(H)$ of $(H,d)$.
\end{lemma}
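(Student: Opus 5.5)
The plan is an uncrossing argument in the spirit of Lemma~\ref{lem:rightmost}. Write $F:=\mathitsc{R}(G)$ for the rightmost minimum cut of $(G,c)$ and $F':=\mathitsc{R}(H)$ for that of $(H,d)$. Suppose, for contradiction, that some $f\in F$ is a strict descendant of some $f'\in F'$. Let $B\subseteq F'$ be the set of edges of $F'$ that have a descendant in $F$. For each $b\in B$, let $A_b:=\{e\in F\mid e\prec b\}$. The aim is to show that
\[
F^*:=\Bigl(F\setminus \bigcup_{b\in B}A_b\Bigr)\cup B
\]
is a minimum $s$-$t$ cut of $(G,c)$. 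That contradicts the defining property of $\mathitsc{R}(G)$: the edge $b\in F^*$ or one of its ancestors must lie in $F$, while $F$ also contains a descendant of $b$, and edges of a minimum cut are pairwise parallel by Lemma~\ref{lem:path-cut}.

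First I would establish the structural facts. The edges of $F'$ are pairwise parallel, so the subtrees below distinct $b\in B$ are disjoint. Hence the sets $A_b$ are pairwise disjoint, and no $b$ is comparable with an edge of $F\setminus\bigcup_b A_b$.

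Next I check that $F^*$ is a cut. Every $s$-$t$ path $Q$ meets $F$ in exactly one edge $e$. If $e\in A_b$, then by the unique-suffix property of a BPA the suffix of $Q$ after $e$ passes through $b$, so $Q$ meets $F^*$. Otherwise $e\in F^*$ directly.

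It remains to prove $c_b\le c(A_b)$ for each $b\in B$, which gives $c(F^*)\le c(F)$ after summing. I would use the minimality of $F'$ in $(H,d)$. Replacing $b$ in $F'$ by $A_b\cap E(H)$ still yields an $s$-$t$ cut of $H$, by the same suffix argument as above. Therefore $d_b\le \sum_{e\in A_b\cap E(H)} d_e$. Edges of $A_b$ that were deleted from $G$ have $c_e-r=0$, so the right-hand side equals $\sum_{e\in A_b}(c_e-r\,[e\in P])$.

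The main obstacle is converting this $d$-inequality back into a $c$-inequality, which requires that $P$ passes through $b$ exactly when it passes through an edge of $A_b$. I would argue in two directions.

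If $P$ meets $A_b$, then $P$ contains $b$ by the unique suffix property. Moreover, $P$ meets $A_b$ in at most one edge, since the edges of $A_b$ are pairwise parallel.

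Conversely, if $P$ contains $b$, then $P$ meets $F$ in exactly one edge $e$, and I rule out every location of $e$ other than $A_b$. The case $e=b$ is impossible, because $F$ would then contain comparable edges $b$ and a descendant of $b$. The case that $e$ is an ancestor of $b$ is impossible for the same reason. The case $e\in F\setminus A_b$ with $e$ parallel to $b$ is impossible, since on $P$ every edge is either a descendant or an ancestor of $b$. Hence $e\in A_b$.

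With this, the $r$-terms cancel on both sides, giving $c_b\le c(A_b)$, and the contradiction above completes the proof.
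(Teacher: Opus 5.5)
Your proof is correct and is essentially the paper's exchange argument at the offending edge, run in the contrapositive direction: the paper uses the rightmost property of $\mathitsc{R}(G)$ to get $c(G_{\nu(e_2)})<c_{e_2}$ and then contradicts minimality of $\mathitsc{R}(H)$, whereas you use minimality of $\mathitsc{R}(H)$ in $(H,d)$ to get $c_b\le c(A_b)$ and then contradict the rightmost property of $\mathitsc{R}(G)$ (a single $b$ already suffices, so the uncrossing over all of $B$ is unnecessary). Your check that $P$ meets $A_b$ exactly when it contains $b$, so that the $r$-terms cancel, makes explicit the passage from $c$ to $d$ that the paper's one-line proof skips (it writes $c$ for the capacities on $H$). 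One small correction: showing that $(F'\setminus\{b\})\cup(A_b\cap E(H))$ is a cut of $H$ needs the converse fact you prove later (a path through $b$ meets $\mathitsc{R}(G)$ inside $A_b$), not the suffix argument from your cut check.
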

\begin{proof}
 
 Suppose, to the contrary, that the statement does not hold. Then there exist edges $e_1\in \mathitsc{R}(G)$ and $e_2\in \mathitsc{R}(H)$ such that $e_1\prec e_2$. Let $G_{\nu(e_2)}$ denote the subgraph of $G$ consisting all $s$-$\nu(e_2)$ paths in $G$\purplecomment{, where $\nu(e_2)$ denote $e_2$'s tail vertices.} Since $e_1$ belongs to the rightmost minimum cut of $G$, we have $c(G_{\nu(e_2)})<c_{e_2}$. Because $H$ is a subgraph of $G$, it follows that $c(H\cap G_{\nu(e_2)})\le c(G_{\nu(e_2)})<c_{e_2}$, which contradicts the fact that $e_2$ lies in a minimum cut of $H$.
\qed
\end{proof}

Let $H^{(i)}$, $i=1,2,\ldots,m$ be as constructed in Algorithm \ref{alg:flow}. The following result follows immediately from Lemmas \ref{lem:BPAcut1} and \ref{lem:cutr}. 

\begin{corollary}\label{lem:BPAcut}
If $G$ is a BPA, then for each $i=1,\ldots,m-1$, the rightmost minimum $s$-$t$ cut of $(\cup_{h=1}^{i+1} H^{(h)},c)$ is on the right of the rightmost minimum $s$-$t$ cut of $(\cup_{h=1}^{i} H^{(h)},c )$.
\end{corollary}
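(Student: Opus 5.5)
The plan is to apply the cut-comparison argument from the proof of Lemma~\ref{lem:BPAcut1} to the nested pair of graphs $K_i:=\cup_{h=1}^{i}H^{(h)}$ and $K_{i+1}:=\cup_{h=1}^{i+1}H^{(h)}$, both carrying the original capacities $c$. Lemma~\ref{lem:cutr} will make the objects canonical. By its proof (under the saturated-final-slice convention), $K_i$ coincides with the union of all $s$-$t$ paths of free-flow length at most $\tau^{(i)}$. This union is again a BPA and depends only on the slicing, so it is independent of the chosen decomposition. Moreover $X_i$ is a maximum flow in $K_i$ with $\mathitsc R(X_i)=\mathitsc R(K_i)$.

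\textbf{Step 1.} I will check that $K_i\subseteq K_{i+1}$ are both $s$-$t$ sub-BPAs of $G$. Each is irredundant because it is a union of $s$-$t$ paths, and every non-polar vertex keeps its unique out-edge along the unique suffix. So ancestry and descendancy in $K_i$ and $K_{i+1}$ agree with those in $G$, and $\mathitsc R(K_i)$ and $\mathitsc R(K_{i+1})$ are well defined by Lemma~\ref{lem:rightmost}.

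\textbf{Step 2.} I will argue by contradiction. Suppose there are $e_1\in\mathitsc R(K_{i+1})$ and $e_2\in\mathitsc R(K_i)$ with $e_1\prec e_2$. Since $e_2$ has a proper descendant, its tail is not $s$. Let $A$ be the set of edges of $\mathitsc R(K_{i+1})$ preceding $e_2$; it is nonempty because it contains $e_1$. By the unique-suffix property, $A$ separates $s$ from $\nu(e_2)$ inside the subgraph $K_{i+1}\cap G_{\nu(e_2)}$, which consists of the $s$-$\nu(e_2)$ paths of $K_{i+1}$. Moreover $A$ is a minimum cut there, since otherwise $\mathitsc R(K_{i+1})$ could be improved. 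If $c(A)\ge c_{e_2}$, then replacing $A$ by $\{e_2\}$ would give an $s$-$t$ cut of $K_{i+1}$ of no larger capacity that lies strictly to the right. This contradicts the defining property of the rightmost cut. Hence
\[
c\bigl(K_{i+1}\cap G_{\nu(e_2)}\bigr)<c_{e_2}.
\]

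\textbf{Step 3.} Since $K_i\subseteq K_{i+1}$ with the same capacities, the displayed inequality gives $c(K_i\cap G_{\nu(e_2)})<c_{e_2}$. Replacing $e_2$ in $\mathitsc R(K_i)$ by a minimum cut of $K_i\cap G_{\nu(e_2)}$ then yields a strictly cheaper $s$-$t$ cut of $K_i$. This contradicts $e_2$ lying in a minimum cut of $K_i$, and the corollary follows for every $i\le m-1$.

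I expect the main obstacle to be the justification in Step 2 that the edges of $\mathitsc R(K_{i+1})$ upstream of $e_2$ satisfy $c(A)<c_{e_2}$. Two points need care there: the swap must produce a genuine $s$-$t$ cut, which uses the unique suffix so that every path through $e_2$ meets $A$ first; and the tie case $c(A)=c_{e_2}$ must be excluded, which uses rightmostness rather than minimality. A secondary point is confirming through Lemma~\ref{lem:cutr} that the truncated final slice does not change $K_i$ for $i<m$.
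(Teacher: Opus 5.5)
Your argument is correct and follows the paper's route. The paper obtains this corollary by applying the cut comparison of Lemma~\ref{lem:BPAcut1} to the nested pair $\cup_{h=1}^{i}H^{(h)}\subseteq\cup_{h=1}^{i+1}H^{(h)}$ with the same capacities $c$ (citing Lemma~\ref{lem:cutr} alongside), and your Steps~1--3 are that argument, with the justification of $c(K_{i+1}\cap G_{\nu(e_2)})<c_{e_2}$ (which the paper only asserts) filled in correctly. Your opening identification of $\cup_{h=1}^{i}H^{(h)}$ with the union of all $s$-$t$ paths of length at most $\tau^{(i)}$ is false in general: take $e_a,e_b\colon s\to v$ of lengths $0,1$, a bottleneck $g\colon v\to t$, and a long edge $e_c\colon s\to t$; then $e_b$ lies on a path of length at most $\tau^{(2)}$ but in no $H^{(h)}$. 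Nothing in Steps~1--3 uses it, nor the final-slice convention, since $H^{(h)}$ is fixed before $\xi^{(h)}$ is chosen, so you should simply drop both remarks.
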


\section{Nash Flow Dynamics}\label{sec:nash}


In this section, we focus on Nash flow dynamics on chains of BPAs. 

\subsection{Flows in a Single Block}
As a first step, in this subsection, we consider the case where $G$ is a single BPA with inflow rate $\mu(\theta)$, where $\mu(\theta)\leq \mu$ is a monotone non-decreasing \purplecomment{piecewise-constant}  function of time $\theta$ and satisfies $\mu(\theta)=0$  for $\theta<0$ . Let  $R_{\theta}$ denote the rightmost minimum cut of the dynamic shortest path network $G_\theta=(V,E'_{\theta})$, and let $c(G_\theta)$ denote the size of a minimum cut of the shortest path network $G_\theta$. 
Since $G$ is a BPA, its $s$-$t$ subgraph $G_\theta$ is also a BPA.

Recall that $\mathbf{x'}(\theta) = (x'_e(\theta))_{e \in E'_\theta}$ is a static $s$-$t$ flow on $G_\theta$ of value $\mu(\theta)$, see (\ref{char:static}) and Definition 2 in \cite{Cominetti2015}. As shown by the following lemma, if the inflow rate of time $\theta$ is no smaller than $c(G_\theta)$, and the particle with network entry time $\theta$ never queues after passing through  $R_{\theta}$, then, in the static flow,  the edges in $R_\theta$ share the total flow value $\mu(\theta)$ based on their capacity's proportion of the total capacity of the cut.

\purplecomment{
Before describing the properties of Nash flows, we first present several basic properties of flows over time. Consider a flow over time instance $(G,{c}, \tau,s,t,\mu)$, where $G$ is a BPA graph. For any vertex $v\in V$, let $G_v$ denote the subgraph of $G$ consisting of all paths in $\mathcal{P}_{s,v}$.
}
\purplecomment{
\begin{lemma}\label{lem:boundness}
    At any time $\theta$, for any vertex $v\in V(G)\setminus{s}$ and any edge $e\in \delta^+(v)$, $f^+_{e}(\theta)\le {c}(G_v)$.
\end{lemma}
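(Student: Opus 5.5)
Since $G$ is a BPA, every non-polar vertex $v$ has exactly one outgoing edge. Hence for $e\in\delta^+(v)$ we have $f^+_e(\theta)=\sum_{e'\in\delta^-(v)}f^-_{e'}(\theta)$ for almost every $\theta$, by flow conservation. So the plan is to bound the total rate at which flow arrives at $v$ at time $\theta$, and to show this rate never exceeds $c(G_v)$. The argument will be an induction over the vertices of $G_v$ in topological order. It works for any feasible flow over time with inflow rate at most $\mu\le c(G)$, not only for Nash flows.

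The statement to prove inductively is slightly stronger and is phrased in terms of cuts. For every vertex $v\ne s$ and every $s$-$v$ cut $F$ of $G_v$, the total outflow rate into $v$ is at most $\sum_{e\in F}c_e$. Equivalently, the arrival rate at $v$ is at most $c(G_v)$. To carry this out, fix a minimum $s$-$v$ cut $F$ of $G_v$. Each in-edge $e'=uv$ of $v$ either lies in $F$ or does not. If $e'\in F$, the outflow rule gives $f^-_{e'}(\theta)\le c_{e'}$. If $e'\notin F$, then $u\neq s$ and $F$ restricts to an $s$-$u$ cut $F_u$ of $G_u$; here we use the BPA/series-parallel structure, namely that $G_u$ is the part of $G_v$ upstream of $e'$, and that the subgraphs $G_u$ for the different in-neighbours $u$ of $v$ are edge-disjoint except at $s$ (unique suffix). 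For this case, the outflow rule gives $f^-_{e'}(\theta)\le f^+_{e'}(\theta-\tau_{e'}-q)$ in the free-flowing case, and otherwise $f^-_{e'}(\theta)\le c_{e'}$. The quantity $f^+_{e'}$ at the earlier time is bounded by induction by $c(F_u)$. Summing over the in-edges gives the bound $\sum_{e\in F}c_e=c(G_v)$.

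One more observation handles the edges not in $F$. The outflow of an edge never exceeds $\min\{c_{e'},\text{earlier inflow rate bound}\}$. This holds for the following reason. If the queue is empty, the outflow is $\min\{f^+,c\}$ at the shifted time. If the queue is positive, the outflow equals $c_{e'}$. But a queue only forms when the inflow exceeded $c_{e'}$, and that is only possible if $c_{e'}<c(F_u)$, in which case $c_{e'}\le c(F_u)$ anyway. So in all cases the outflow of $e'$ is at most $\max$ of these two quantities, and both are at most $c(F_u)$. The base case is an edge leaving $s$: its rate is at most $\mu$, which is not useful by itself, but an edge leaving $s$ that enters $v$ is always in every cut considered or is bounded by $c_{e'}$ at the exit. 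So the base case reduces to the rule $f^-\le c$.

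I expect the main obstacle to be the decomposition step. It must be shown that a minimum cut of $G_v$ splits, along the in-edges of $v$, into a choice per in-edge $uv$ between taking $uv$ itself and taking a minimum cut of $G_u$. This gives $c(G_v)=\sum_{uv\in\delta^-(v)}\min\{c_{uv},c(G_u)\}$, with $c(G_s)=\infty$. This identity follows from Lemma~\ref{lem:path-cut} together with the unique-suffix property, since the $G_u$ are edge-disjoint parallel components meeting only at $s$. Once this recursion is established, the bound $f^-_{uv}\le\min\{c_{uv},c(G_u)\}$ follows from the edge rule and induction, and the lemma follows by summation and conservation at $v$.
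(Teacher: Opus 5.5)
Your proposal is correct and follows essentially the same route as the paper: conservation at $v$ through its unique out-edge, induction in topological order giving $f^-_{uv}\le\min\{c_{uv},c(G_u)\}$ for each in-edge, and the recursion $c(G_v)=\sum_{uv\in\delta^-(v)}\min\{c_{uv},c(G_u)\}$ with $c(G_s)=\infty$. Your handling of the transit-time shift and of the queued case is more careful than the paper's, which simply writes $f^-_e(\theta)=\min\{c_e,f^+_e(\theta)\}$; in particular, your observation that a positive queue on $uv$ forces $c_{uv}<c(G_u)$ is the step the paper glosses over.
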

}
\purplecomment{
\begin{proof}
    Since $G$ is BPA, we can assign a topological ordering to the vertices such that for every directed edge $(v,w)$, the label of $v$ is strictly less than that of $w$. Suppose $|V|=n$, we proceed by induction on $n$. For any vertex $u\in V(G)$, denote $ N^+(u)=\{v|e=uv\in E(G)\}$. When $n=2$, $v\in N^+(s)$, then for $e=vw$ $f^+_{e}(\theta)\le \sum _{e\in \delta^-(v)}{c}_e={c}(G_v)$. Suppose the lemma is correct on $i\le k$, then when $i=k+1$, 
    \begin{equation*}
        \begin{aligned}
            f^+_{e}(\theta)&\le \sum _{e\in \delta^-(v)}f^-_{e}(\theta)=\sum _{e\in \delta^-(v)}\min\{{c} _e, f^+_{e}(\theta)\} \\ &\le \sum _{e=uv\in \delta^-(v)}\min\{{c} _e, {c}(G_u)\}= {c}(G_v)
        \end{aligned}
    \end{equation*}
    The first equality holds due to the definition of $f^-_e(\theta)$. The second inequality holds due to the induction. The second equality can be easily proofed by induction on series-parallel graph. Therefore, the lemma is correct for any vertex $v\in V(G)\setminus{s}$ and any edge $e\in \delta^+(v)$, $f^+_{e}(\theta)\le {c}(G_v)$.  
\end{proof}
}

\purplecomment{
\begin{lemma}\label{lem:boundness2}
At any time $\theta$, for any edge $e$ after a minimum cut $R_{\theta}$ of $E'_{\theta}$, there is $f^+_{e}(l_v(\theta))< {c}_e$ .
\end{lemma}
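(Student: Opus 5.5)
The plan is to compare two quantities at the tail $v=\nu(e)$. On one side, the rightmost property of $R_\theta$ should force $c_e$ to be strictly larger than the capacity of the active network feeding $v$. On the other side, the rate $f^+_e(l_v(\theta))$ should be bounded by that same capacity, in the spirit of Lemma~\ref{lem:boundness}. I read ``$e$ after $R_\theta$'' as: $e=vw\in E'_\theta$ and $e$ is a strict ancestor of some edge of $R_\theta$, so it lies on the unique suffix of some cut edge. Let $G_{\theta,v}$ be the subgraph of $G_\theta$ formed by all active $s$-$v$ paths, and let $A_e$ be the set of edges of $R_\theta$ that are descendants of $e$.

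\emph{Step 1: $c_e> c(G_{\theta,v})$.} Since $G_\theta$ is a BPA, $A_e$ is an $s$-$v$ cut of $G_{\theta,v}$. It must be a minimum one. Otherwise, replacing $A_e$ inside $R_\theta$ by a cheaper $s$-$v$ cut of $G_{\theta,v}$ would still give an $s$-$t$ cut of $G_\theta$, because every active path through $e$ passes through $v$. That cut would be cheaper than $R_\theta$, which is impossible. Now suppose $c_e\le c(A_e)$. Then $(R_\theta\setminus A_e)\cup\{e\}$ is an $s$-$t$ cut of capacity at most $c(R_\theta)$, hence a minimum cut. It contains $e$, which lies strictly to the right of the edges of $A_e$. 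This contradicts the defining property of the rightmost cut from Lemma~\ref{lem:rightmost}, under which every edge of a minimum cut is in $R_\theta$ or has an ancestor there. Hence $c_e> c(A_e)=c(G_{\theta,v})$.

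\emph{Step 2: $f^+_e(l_v(\theta))\le c(G_{\theta,v})$.} I will follow the induction of Lemma~\ref{lem:boundness}, taking the vertices of $G_{\theta,v}$ in topological order. The claim at each vertex $u$ is that the total rate entering $u$ at time $l_u(\theta)$ is at most $c(G_{\theta,u})$. Flow conservation at $u$ gives $\sum_{e'\in\delta^+(u)} f^+_{e'}(l_u(\theta))=\sum_{u'u\in\delta^-(u)} f^-_{u'u}(l_u(\theta))$. Each term on the right is at most $c_{u'u}$. For an active edge $u'u$, the term is also at most the inflow available at $u'$ at time $l_{u'}(\theta)$, since that particle exits $u'u$ exactly at $l_u(\theta)$. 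Recursively this bound is at most $c(G_{\theta,u'})$. The series-parallel identity $\sum_{u'u}\min\{c_{u'u},c(G_{\theta,u'})\}=c(G_{\theta,u})$ then closes the induction. I expect this identity to follow as in Lemma~\ref{lem:boundness}, using the unique-suffix structure.

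\emph{Main obstacle.} The delicate point is showing that inactive edges $u'u\notin E'_\theta$ contribute no outflow at time $l_u(\theta)$. My first attempt uses the Nash condition together with FIFO. Since $T_{u'u}(l_{u'}(\theta))>l_u(\theta)$, a particle exiting $u'u$ at time $l_u(\theta)$ must have entered at a time $l_{u'}(\theta')$ with $\theta'<\theta$. Because $l_u$ is continuous and strictly increasing, such a particle arrives at $u$ strictly later than the shortest arrival $l_u(\theta')$, so $u'u\notin E'_{\theta'}$. By the Nash condition, the inflow into $u'u$ is then zero almost everywhere on that set of entry times. Working with right-continuous equilibria as the paper stipulates, this should upgrade to $f^-_{u'u}(l_u(\theta))=0$ in a right-neighbourhood of $l_u(\theta)$. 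If this pointwise argument is awkward, the fallback is to argue with the thin flow $\mathbf{x}'(\theta)$, which is supported on $E'_\theta$, and to use $f^+_e(l_v(\theta))=x'_e(\theta)/l'_v(\theta)$ together with Theorem~\ref{thm:eq}. Combining Steps 1 and 2 yields $f^+_e(l_v(\theta))\le c(G_{\theta,v})<c_e$.
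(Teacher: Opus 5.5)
\textbf{Where the proof breaks.} Your overall route is the paper's. Step~1 is its second inequality: the rightmost property forces $c_e>\sum_{e_0\in C^e_\theta}c_{e_0}=c(A_e)$, and you argue that step correctly. Step~2 is its first inequality, obtained via Lemma~\ref{lem:boundness}. The gap is in Step~2, at the sentence ``for an active edge $u'u$, the term is also at most the inflow available at $u'$ at time $l_{u'}(\theta)$.''

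That bound holds only if $z_{u'u}(l_{u'}(\theta))=0$. If $u'u\in E^*_\theta$, the particle entering at $l_{u'}(\theta)$ leaves through a nonempty queue. By the queueing rule, the outflow at $T_{u'u}(l_{u'}(\theta))=l_u(\theta)$ is then $c_{u'u}$, however small $f^+_{u'u}(l_{u'}(\theta))$ is; think of a draining queue.

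For queued edges at or before $R_\theta$ this does no harm, provided you start the recursion at $A_e$ rather than at $s$, since each cut edge emits at most its capacity. For a queued edge $u'u$ strictly between $A_e$ and $v$, the recursion genuinely breaks. Your own Step~1, applied to $u'u$, gives $c_{u'u}>c(G_{\theta,u'})$. So $u$ already receives rate at least $c_{u'u}$. If $c_{u'u}\ge c_{e'}$ for the out-edge $e'$ of $u$, which nothing in the static argument excludes, the conclusion fails for $e'$.

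Lemma~\ref{lem:boundness} cannot rescue this. The queue on $u'u$ may have been built at earlier times, when the inflow into $u'u$ was bounded only by the full upstream capacity $c(G_{u'})$, not by the current active capacity $c(G_{\theta,u'})$. The paper's one-line application of Lemma~\ref{lem:boundness} to $G[E'_\theta]$ passes over the same point.

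\textbf{What is missing.} You need the fact that $E^*_\theta$ contains no edge strictly downstream of $R_\theta$. This is not a static consequence of the cut structure. It is a property of how the Nash flow evolves on a BPA: the cut only moves rightward, and queues arise only at or before the current cut, so they are never left behind it (cf.\ claim~(ii) in the proof of Lemma~\ref{lem:dynamic}). It would have to be proved jointly with that rightward movement, for instance by a first-violation-time argument.

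Alternatively, add it as a hypothesis, which suffices where the lemma is used. Lemma~\ref{lem:assign} assumes exactly that no edge of $R_\theta$ is a descendant of an edge of $E^*_\theta$. Under that assumption, every active edge between $A_e$ and $v$ has an empty queue at its tail label, so its outflow at the head label is $\min\{c,f^+\}$. Your recursion started at $A_e$ then gives $f^+_e(l_v(\theta))\le c(A_e)<c_e$.

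Your treatment of inactive in-edges is adequate for this purpose. The pointwise-rate statement is in any case only meaningful for almost every $\theta$.
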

}
\purplecomment{
\begin{proof}
For any edges $e=vw \in E'_{\theta} $, such that there exists edge(s) that is descendant of $e$, denote those edges as $C^e_{\theta}$. We will have 
\[f^+_{e}(l_v(\theta))\le  \sum_{e_0\in C^{e}_{\theta}}{c}_{e_0}< {c}_e.\] 
The first inequality follows by applying Lemma~\ref{lem:boundness} to the case $G=G[E'_\theta]$. The second inequality holds because if $ \sum_{e_0\in C^{e}_{\theta}}{c}_{e_0} \ge {c}_e$, the capacity of $R_{\theta}\setminus C^{e}_{\theta} \cup \{e\}$ is at least $R_{\theta}$, contradicting to the fact that $R_{\theta}$ is the rightmost minimum cut. 
\end{proof}
}

\begin{lemma}\label{lem:assign}
  For almost all $\theta$, if $c(G_\theta)\leq \mu(\theta)$ and no edge of $R_{\theta}$ is a descendant of any edge of $E_\theta^*$, then \purplecomment{for any $e$ in $R_{\theta}$, there is} $x'_e(\theta)=\frac{c_e}{c(G_\theta)}\cdot\mu(\theta)$. Moreover,  $l'_t(\theta)=\frac{\mu(\theta)}{c(G_\theta)}\ge 1$
\end{lemma}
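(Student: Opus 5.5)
Fix a time $\theta$ at which all $x'_e(\theta)$ and $l'_v(\theta)$ exist and the thin-flow relations of Theorem~\ref{thm:eq} hold; this excludes only a null set. The plan is to show that every edge of $R_\theta$ has derivative ratio $x'_e(\theta)/c_e$ equal to the common value $l'_t(\theta)$. Then flow conservation across the cut $R_\theta$ forces the proportional split.

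\textbf{Step 1: edges strictly after the cut.} Since $G_\theta$ is a BPA, each edge $e\in R_\theta$ with head $w$ has a unique $w$-$t$ path $Q_e$ in $G_\theta$, and all edges of $Q_e$ are ancestors of $e$. By hypothesis no edge of $R_\theta$ is a descendant of an edge of $E^*_\theta$, so no edge of $Q_e$ lies in $E^*_\theta$. For every edge $g=vw'$ on $Q_e$ we are therefore in the first two cases of Theorem~\ref{thm:eq}, which give $l'_{w'}(\theta)\ge l'_v(\theta)$ when $x'_g(\theta)>0$; in the zero case only $\le$ holds.

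I would like to show these are equalities, i.e.\ that the label derivative is constant along $Q_e$. Intuitively $l'$ jumps only where $x'_g/c_g>l'_v$, which creates a queue. The tool is Lemma~\ref{lem:boundness2}: an edge downstream of the rightmost minimum cut receives inflow strictly below capacity. Applied to the static thin flow on $G_\theta$, in the spirit of Lemma~\ref{lem:boundness}, it should give $x'_g(\theta)/c_g<\max_{e\in R_\theta} x'_e/c_e$ along $Q_e$. This prevents $\max\{l'_v,x'_g/c_g\}$ from exceeding $l'_v$. I expect this to be the main obstacle: turning the strict capacity bound for the rightmost cut into a derivative-level statement requires an induction over the arborescence toward $t$, using the second case of Theorem~\ref{thm:eq} at each merge vertex.

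\textbf{Step 2: the cut edges themselves.} Each edge $e=vw\in R_\theta$ is active. If $e\in E^*_\theta$, Theorem~\ref{thm:eq} gives $l'_w=x'_e/c_e$ directly. Otherwise $l'_w=\max\{l'_v,x'_e/c_e\}$. To rule out $l'_v>x'_e/c_e$, I would use that upstream of $e$ the subgraph $G_v\cap G_\theta$ has capacity at least $c_e$, because $e$ is in a minimum cut. I would also use $\mu(\theta)\ge c(G_\theta)$ to show that the ratio at the cut is at least $1$ while upstream ratios cannot exceed it. Combining with Step 1, every $e\in R_\theta$ satisfies $x'_e(\theta)=c_e\,l'_t(\theta)$.

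\textbf{Step 3: summation.} By Lemma~\ref{lem:path-cut}, every $s$-$t$ path of $G_\theta$ meets $R_\theta$ exactly once. Since $\mathbf{x}'(\theta)$ is supported on $E'_\theta$, flow conservation~\eqref{char:static} gives $\sum_{e\in R_\theta}x'_e(\theta)=\mu(\theta)$. Substituting Step 2 yields $l'_t(\theta)\,c(G_\theta)=\mu(\theta)$. Hence $l'_t(\theta)=\mu(\theta)/c(G_\theta)\ge1$ by the hypothesis $c(G_\theta)\le\mu(\theta)$, and $x'_e(\theta)=\frac{c_e}{c(G_\theta)}\mu(\theta)$ for each $e\in R_\theta$.
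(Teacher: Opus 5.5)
Your Step 3 is fine, but Steps 1 and 2 carry the whole content of the lemma, and neither is actually carried out. Step 1, as you formulate it, would not close. To keep $l'$ constant along $Q_e$ you need $x'_g(\theta)/c_g\le l'_v(\theta)$ for each $g=vw'$ on $Q_e$. Your bound $x'_g/c_g<\max_{e'\in R_\theta}x'_{e'}/c_{e'}$ compares against the ratios of \emph{all} cut edges. But $l'_v$ is only known to dominate the ratios of the cut edges upstream of $g$, and before equalization is proved these may be strictly below the global maximum. So your inequality does not prevent $\max\{l'_v,x'_g/c_g\}>l'_v$. Also, Lemma~\ref{lem:boundness2} bounds the physical rate $f^+_g$, not the thin flow. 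The paper uses it directly: together with $x'_g(\theta)=f^+_g(l_v(\theta))\,l'_v(\theta)$ it gives $x'_g/c_g<l'_v$ at once. The second case of Theorem~\ref{thm:eq} then carries $l'$ unchanged to $t$, giving $l'_t=l'_{\omega(e)}\ge x'_e/c_e$ for every $e\in R_\theta$ carrying flow. A purely thin-flow version also works if you take the maximum only over the cut edges $C^g$ upstream of $g$ and use $c(C^g)<c_g$.

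The more serious gap is Step 2. Ruling out $l'_v>x'_e/c_e$ cannot be done edge by edge from local data. A non-queued cut edge with $l'_{\nu(e)}=1$ and $x'_e<c_e$ has $l'_{\omega(e)}=1>x'_e/c_e$. You need two things your sketch does not supply:
\begin{enumerate}
\item an actual upper bound on $l'_{\nu(e)}$;
\item a global argument combining the common value $l'_t$ from Step 1 with $\sum_{e\in R_\theta}x'_e=\mu(\theta)\ge c(G_\theta)$.
\end{enumerate}
Your claim that ``the ratio at the cut is at least $1$'' is not available per edge beforehand. The paper supplies both by contradiction. It picks $a,b\in R_\theta$ with $x'_b/c_b<\mu(\theta)/c(G_\theta)<x'_a/c_a\le l'_t=l'_{\omega(b)}$, and deduces $l'_{\omega(b)}=l'_{\nu(b)}$ and $f^+_b<c_b$. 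It then examines $G_\theta$ upstream of $b$. Either a queue-free, under-capacity path from $s$ forces $l'_{\nu(b)}=1<l'_t$. Or the last congested $s$-$\nu(b)$ cut $F$ discharges at full capacity, giving $f^+_b=c(F)\ge c_b$. This is where ``upstream capacity at least $c_e$'' really enters.

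Finally, your conclusion $x'_e=c_e\,l'_t$ for all $e\in R_\theta$ presupposes $x'_e>0$. For a zero-flow cut edge, Step 1 only gives $l'_t\le l'_{\omega(e)}$, so positivity must be proved separately; the paper does this in its last paragraph. If you prefer to stay at the thin-flow level, the missing ingredient is a bound for each vertex $u$: $l'_u\le\max\{1,X_u/c(T_u)\}$. Here $T_u$ is the upstream arborescence of $u$ in $G_\theta$ and $X_u$ is the thin flow entering $u$. It follows by induction toward $u$, using $\min_j x_j/m_j\le\sum_j x_j/\sum_j m_j$ over the branches entering $u$. Combined with $c(T_{\nu(e)})\ge c_e$, it yields $l'_{\omega(e)}\le\max\{1,x'_e/c_e\}$. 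The global summation then closes both Step 2 and the zero-flow case.
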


\begin{proof}

\purplecomment{Note that the equilibrium characterization is right-continuous, there exists sufficiently small constant $\delta>0$ such that $G_\theta$ and $R_\theta$ remain unchanged during the interval $[\theta,\theta+\delta)$.  During the time interval $[\theta,\theta+\delta)$, for every edge $e$ that is an ancestor of some edges of $R_\theta$, by Lemma \ref{lem:boundness2}, $f_{e}^{+}(l_{\nu(e)}(\theta))<c_e$, i.e., no queue forms during the time interval $\big[l_{\nu(e)}(\theta), l_{\nu(e)}(\theta+\delta)\big)$. We first assume that for each edge $e$ in $R_\theta$ or is an ancestor of some edges of $R_\theta$, there is $x'_e(\theta)>0$. Then recursively using 
\[
\frac{x'_e(\theta)}{c_e} = \frac{f_e^+(l_{\nu(e)}(\theta))}{c_e} \cdot l'_{\nu(e)}(\theta)<l'_{\nu(e)}(\theta)
\]
and Theorem \ref{thm:eq}, we can prove that for every edge $e\in R_\theta$, it holds $l_t'(\theta)=l_{\omega(e)}'(\theta)\ge \frac{x'_e(\theta)}{c_e}$.   }

\purplecomment{By contradiction, suppose that the lemma fails.  Then there exist edges $a,b\in R_\theta$ such that 
\[
x'_a(\theta)\ge\frac{c_a}{c(R_\theta)}\cdot\mu(\theta)+\Delta,
\]
\[
x'_b(\theta)\le\frac{c_b}{c(R_\theta)}\cdot\mu(\theta)-\Phi,
\]
where $\Delta$ and $\Phi$ are positive constants.  Then from the above we know 
\[
\frac{x'_b(\theta)}{c_b}<\frac{\mu(\theta)}{c(R_\theta)}<\frac{x'_a(\theta)}{c_a}\le l_{\omega(a)}'(\theta)=l_t'(\theta)=l_{\omega(b)}'(\theta).
\]
By Theorem \ref{thm:eq} and the definition of $x_e'(\theta)$, it must be the case that $l_{\omega(b)}'(\theta)=l_{\nu(b)}'(\theta)$ and $f_b^+(l_{\nu(b)})(\theta)<c_b$.  }

\purplecomment{If there exists an  $s$-$\nu(b)$ path $P$ of $E_\theta'$ such that $\forall e\in P$,   $e\notin E_\theta^*$ and $f_{e}^+(l_{\nu(e)}(\theta))< c_e$, then we can deduce that $l_t'(\theta)=l_{\nu(b)}'(\theta)=\cdots=l_{s}'(\theta)=1$, contradiction. Otherwise, by the BPA structure, the subgraph (denoted as $B$) consisting of all $s$-$\nu(b)$ paths in $G_\theta$ contains an $s$-$\nu(b)$ cut $F$ such that each edge $e\in F$ either contains a queue at time $\theta$ or $f_{e}^+(l_{\nu(e)}(\theta))\ge c_e$, and no ancestors of any edge in $F$ (before $\nu(b)$) satisfying this. Then the outflow rate satisfies $f_{e}^-(l_{\omega(e)}(\theta))= c_e, \forall e\in F$ and $c(F)=\sum_{e\in F}f_{e}^-(l_{\omega(e)}(\theta))=f_b^+(l_{\nu(b)})(\theta)$. Recall that $c_b\le c(F)$. Contradiction.}

\purplecomment{Moreover, if there exists an edge $e\in R_\theta$ with $s$-$\nu(e)$ path $P$ of $E_\theta'$ such that $\forall e_P\in P$,   $e_P\notin E_\theta^*$ and $f_{e_P}^+(l_{\nu(e_P)}(\theta))< c_{e_P}$, then we can deduce that $l_{\nu(e)}'(\theta)=\cdots=l_{s}'(\theta)=1< \frac{x'_e(\theta)}{c_e}$. Therefore $l'_{\omega(e)}=\frac{x'_e(\theta)}{c_e}$. Otherwise, by the BPA structure, the subgraph (denoted as $B$) consisting of all $s$-$\nu(e)$ paths in $G_\theta$ contains an $s$-$\nu(e)$ cut $F$ such that each edge $e'\in F$ either contains a queue at time $\theta$ or $f_{e'}^+(l_{\nu(e')}(\theta))\ge c_{e'}$, and no ancestors of any edge in $F$ (before $\nu(e)$) satisfying this. Then the outflow rate satisfies $f_{e'}^-(l_{\omega(e')}(\theta))= c_{e'}, \forall e'\in F$. Hence, \[
f_e^+(l_{\nu(e)})(\theta)=\sum_{e'\in F}f_{e}^-(l_{\omega(e')}(\theta))=c(F)\ge c_e.
\]
By the definition of $x_{e}(\theta)$, it follows $l_{\nu(e)}'(\theta)=\frac{x'_e(\theta)}{f_e^+(l_{\nu(e)})(\theta)}\le \frac{x'_e(\theta)}{c_e}$. By Theorem \ref{thm:eq}, we have $l'_{\omega(e)}(\theta)=\max\{l_{\nu(e)}'(\theta),\frac{x'_e(\theta)}{c_e}\}=\mu(\theta)/c(G_\theta)$ for each $e\in R_\theta$, giving the result.}

\purplecomment{If each edge $e$ in $R_\theta$ or is an ancestor of some edges of $R_\theta$, the assumption that $x'_e(\theta)>0$ does not always hold. There must exists edge(s) $e$ in $R_\theta$ such that $x'_e(\theta)=0$. Thus, by Theorem \ref{thm:eq}, $l'_t(\theta)=\cdots= l'_s(\theta)=1$. However, for all edges in $R_\theta$ or an ancestor of some edges of $R_\theta$ with $x'_e(\theta)>0$, denoted as $R^{>0}_\theta=\{e\in R_\theta:x'_e(\theta)>0\}$, follow a similar
argument that there must be 
\[
x'_e(\theta)=\frac{c_e\cdot \mu(\theta)}{\sum_{e\in R^{>0}_\theta}c_e}=\frac{\mu(\theta)}{\sum_{e\in R_\theta}c_e}
\]
and 
\[
l'_t(\theta)=l'_{\omega(e)}(\theta)=\frac{\mu(\theta)}{\sum_{e\in R_\theta:x'_e(\theta)>0}c_e}>1,
\]
leading to a contradiction.\qed} 
\end{proof}

The above lemma shows that if the inflow rate of time $\theta$ is no smaller than $c(G_\theta)$, and the particle with network entry time $\theta$ never queues after passing through  $R_{\theta}$, and the travel latency $l_t(\theta)-\theta$ is monotone non-decreasing w.r.t. time $\theta$.

Recalling Section~\ref{sub:nde}, Algorithm~\ref{alg:flow} outputs an SPF decomposition $\xi$ for the instance with a constant network inflow rate $\mu\le c(G)$, which provides a sequence of shortest-path subgraphs $H^{(i)}$, $i=1,2,\ldots$ of the current graph $G^{\xi,i}$. By Corollary~\ref{cor:uni}, the instance has a unique {capacity-diameter slicing} $(c^{(i)},\tau^{(i)})_{i=1}^m$ independent of the flow decompositions.

\begin{lemma}\label{lem:dynamic}
If $\tau^{(i)}\leq l_t(\theta)-\theta<\tau^{(i+1)}$, then $c(G_\theta)=\sum_{h=1}^ic^{(h)}$, for every $i=1,\ldots,m-1$.
\end{lemma}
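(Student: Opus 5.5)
The plan is to compare $G_\theta$ with the static network
\[
K(L):=\bigcup\{P\in\mathcal P_{s,t}\mid \tau(P)\le L\},\qquad L:=l_t(\theta)-\theta .
\]
I would show $c(G_\theta)=c(K(L))$ and then $c(K(L))=\sum_{h=1}^i c^{(h)}$.

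\emph{The static identity.} Since $\tau^{(i)}\le L<\tau^{(i+1)}$ and $i<m$, the first $i$ slices of Algorithm~\ref{alg:flow} are all full, with $|\xi^{(h)}|=c^{(h)}$. So the prefix flow $X_i$ has value $\sum_{h\le i}c^{(h)}$. It is feasible in $K(L)$, because every path it uses has length at most $\tau^{(i)}\le L$. I would then argue, as in the proof of Lemma~\ref{lem:cutr}, that after $i$ slices no $s$-$t$ path of length $<\tau^{(i+1)}$ has positive residual capacity. Otherwise the $(i+1)$-st residual shortest-path graph would have diameter $<\tau^{(i+1)}$. Hence $X_i$ is maximum in $K(L)$, and by the saturation/augmentation property (Lemma~\ref{lem:path-cut}) we get $c(K(L))=\sum_{h\le i}c^{(h)}$. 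Corollary~\ref{cor:uni} makes this independent of the decomposition chosen.

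\emph{Upper bound $c(G_\theta)\le c(K(L))$.} Every active $s$-$t$ path $P$ of $G_\theta$ satisfies $L=l_t(\theta)-\theta\ge\tau(P)$, since queueing delays are nonnegative. Hence $G_\theta\subseteq K(L)$. Because a subgraph cannot have larger source-sink capacity, this inequality follows.

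\emph{Lower bound $c(G_\theta)\ge c(K(L))$.} This is the main obstacle, because paths of $K(L)$ can be inactive at $\theta$. I would prove it by induction over the critical times $\theta_j$ at which $l_t(\theta)-\theta$ reaches $\tau^{(j)}$.

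At $\theta=0$ there are no queues, so $G_0=H^{(1)}$. Within an interval where $G_\theta$ is constant and $c(G_\theta)<\mu(\theta)$, Lemma~\ref{lem:assign} gives $l_t'(\theta)=\mu(\theta)/c(G_\theta)>1$. So the latency strictly increases until it hits the next diameter, at which point the next residual shortest paths become active.

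The key invariant is that queues exist only on edges of $R_\theta$ or on its descendants; edges downstream of $R_\theta$ carry inflow below capacity (Lemma~\ref{lem:boundness2}) and thus stay queue-free. Take any $s$-$t$ path $P$ of $K(L)$ that is not active. I want to show it meets the rightmost cut $R_\theta$ of $G_\theta$. Use the unique-suffix structure of the BPA: let $e=vw$ be the last edge of $P$ not in $G_\theta$. The suffix of $P$ from $w$ lies in $G_\theta$. If this suffix contained no edge of $R_\theta$, then $w$ would lie upstream of $R_\theta$ along a queue-free active route. I would then argue that $P[s,w]$ is queue-free as well: $P[s,w]$ was never used, since $e$ only receives flow when active. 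Then its free-flow length $\tau(P)\le L$ would make $P$ a shortest path, contradicting $e\notin E'_\theta$.

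So $R_\theta$, extended by nothing, is already an $s$-$t$ cut of $K(L)$. This gives $c(K(L))\le c(R_\theta)=c(G_\theta)$. Lemma~\ref{lem:BPAcut1} and Corollary~\ref{lem:BPAcut} would be used to check that the rightmost cuts move monotonically to the right between consecutive critical times, keeping the invariant. The delicate point I expect to need care with is justifying that an edge with no past inflow can have no queue at the relevant time, and handling the boundary case $L=\tau^{(i)}$ exactly via right-continuity.
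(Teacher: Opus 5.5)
Your static identity $c(K(L))=\sum_{h\le i}c^{(h)}$ and the upper bound $c(G_\theta)\le c(K(L))$ are fine. For the identity, say explicitly that in a BPA a path is determined by its first edge, so $K(L)$ contains no $s$-$t$ path longer than $L$; this is what makes $X_i$ maximum in $K(L)$.

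The gap is in the lower bound, at ``$P[s,w]$ was never used, since $e$ only receives flow when active.'' The Nash condition only excludes flow into $e$ at entry times $\rho$ with $e\notin E'_\rho$. It says nothing about earlier entry times $\rho<\theta$ at which $e$ \emph{was} active. Suppose $e$ was active then and has since left the active set. Particles that used edges of $P[s,v]$ (each of which, by the unique suffix, leads into $e$) may have left queues there. Then $l_v(\theta)-\theta$ can exceed $\tau(P[s,v])$, and $\tau(P)\le L$ no longer contradicts $e\notin E'_\theta$.

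What you need is a persistence property: if some particle with entry time $\rho<\theta$ traverses $e$, then $e\in E'_\theta$ (for instance because edges never leave the active set here). With it, no particle entered $P[s,w]$ at entry times before $\theta$. Chasing labels from $s$ then gives $l_v(\theta)\le\theta+\tau(P[s,v])$. Inactivity of $e$ gives $l_w(\theta)<l_v(\theta)+\tau_e$, and the queue-free active suffix gives $l_t(\theta)=l_w(\theta)+\tau(P[w,t])$. Hence $L<\tau(P)$, and your contradiction goes through.

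This persistence is the real dynamic content of the lemma, and you do not supply it. The paper obtains the corresponding facts from two inductive invariants:
\begin{itemize}
\item (i) the edges of $\mathitsc R(\cup_{h\le i}H^{(h)})$ stay active and remain the rightmost cut on $[\theta_i,\theta_{i+1})$, via a continuity argument at a putative first failure time;
\item (iii) paths of $H^{(i+1)}$ are queue-free when they activate. Queue-building flow would have to cross a cut edge, which is saturated and so lies on no $H^{(i+1)}$ path, while edges past the cut receive inflow below capacity.
\end{itemize}
That capacity argument does not transfer to your route. The inactive paths of $K(L)$ you must handle are exactly the blocked ones, with zero residual capacity, so you cannot bypass persistence.

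The same issue affects your invariant that edges past $R_\theta$ are queue-free. Lemma~\ref{lem:boundness2} bounds the inflow only at the single time $l_v(\theta)$. To exclude a queue you need the edge to have been past the dynamic cut at all earlier entry times, i.e.\ monotone rightward movement of $R_\rho$ for $\rho\le\theta$. Lemma~\ref{lem:BPAcut1} and Corollary~\ref{lem:BPAcut} concern static cuts. They apply only after you have identified each $R_\rho$ with a static cut such as $\mathitsc R(K(L_\rho))$, $L_\rho=l_t(\rho)-\rho$. That identification does follow from the capacity equality together with your cut-covering claim at time $\rho$. So the argument must be organized as an induction in time, e.g.\ via the infimum of failure times and right-continuity, carrying both cut monotonicity and persistence as hypotheses. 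As written, the proposal uses both at time $\theta$ without establishing them for earlier times.
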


\begin{proof}

Set $\theta_1:=0$, and for any integer $2\le i\le m$,  define 
$\theta_i=\inf \{\theta>\theta_{i-1} \,|\, l_t(\theta)-\theta=\tau^{(i)}\}$. For convenience, an $s$-$v$ path $P$ is called \emph{active} at time $\theta$ if the flow particle entering $s$ at time $\theta$ can reach $v$ along $P$ at time $l_v(\theta)$. The high-level idea of the proof is to establish the following three facts by induction on $i$. Whenever $\tau^{(i)}\leq l_t(\theta)-\theta<\tau^{(i+1)}$, the following statements hold:
 \begin{itemize}
     \item[(i)] $R_{\theta_i}$ is always the rightmost minimum cut of $G_\theta$;
     \item[(ii)] The rightmost cut $R_{\theta}$ shifts rightward as time $\theta$ goes on. The particle with network entry time $\theta$ never queues after passing through (any edge of) $R_{\theta}$;
     \item[(iii)]  $E_{\theta_{i+1}}^*\cap H^{(i+1)}=\emptyset$, which implies that $H^{(i+1)}\subseteq E'_{\theta_{i+1}}$.
 \end{itemize}
By definition, $l_t(\theta)-\theta=\tau^{(i+1)}$ when $\theta=\theta_{i+1}$. The conclusion  $c(G_\theta)=\sum_{h=1}^ic^{(h)}$ of the lemma follows from statement (iii) instantly.

\paragraph{Base case: $i=1$.}  To prove (i), recall that $H^{(1)}$ denote the shortest paths graph of $G$, we have $E'_{\theta_i}= H^{(1)}$ and $R_{\theta_i}=\mathitsc{R}(H^{(1)})$. We prove that for each edge $e\in H^{(1)}$, $e\in E'_{\theta}$. On contrary, suppose that there exists an edge $e=vw\in H^{(1)}$ and $e=vw\notin E'_{\theta}$. Hence, it follows that there is no queue on any $s$-$v$ path $P_{sv}$ in the $E'_{\theta_1}$; otherwise, a flow particle in such a queue would necessarily traverse $e$ to reach $t$. Thus, at time $\theta$, for an $s$-$w$ path $P_{sv}\cup e$ that goes through $v$, the travel time from $s$ to $w$ is exactly $\tau(P_{sv}\cup \{e\})$. On the other hand, $e=vw\notin E'_{\theta}$, there exists a path $P_{sw}$ with travel time strictly shorter than the travel time of $P_{sv}\cup\{e\}$. This yields a contradiction since, by definition, the shortest path $P_{sv}\cup\{e\}$ is the shortest $s$-$w$ path. Since $l_t(\theta)-\theta<\tau^{(i+1)}$ then any path $p$ with $\tau(p)\ge \tau^{(i+1)}$ is not contain in the current $E'_{\theta}$, which implies that $c(G_\theta)\le c(H^{(1)})$. Moreover, $R_{\theta_1}=\mathitsc{R}(H^{(1)})$ is the rightmost minimum cut in $E'_{\theta}$.

    By (i) and the BPA structure, it is obvious that statement (ii) holds. To prove (iii), for any $s$-$t$ path $P\subseteq H^{(i+1)}$ with $\tau(P)=\tau^{(i+1)}$, for any edge $e=uv\in P$, if there is a queue on it, there must be particles who leave from source $s$ at some time $\rho \in [\theta_i,\theta_{i+1})=[0,\theta_{i+1})$ and travel through $e$ with $f^+_{e}(l_u(\rho))>c_{e}$. All particles that reach $t$ must travel through an edge $e_{\rho}$ in $R_{\theta_i}$,  which, by (i), is a minimum cut of $E'_{\rho}$.
    \begin{itemize}
        \item If $e\prec e_{\rho}$ or $e= e_{\rho}$, the path starting from $e$ is unique, which means $e_{\rho}\in P\subseteq H^{(i+1)}$. However $e_{\rho} \notin H^{(i+1)}$, a contradiction.
        \item If $e_{\rho}\prec e$, let  $C^e_{\theta}$ consists of the edges that are descendants of $e$ in $R_\rho$. Then, $c_{e}>\sum _{e'\in C^e_{\theta}}c_{e'}$ ($e\in H^{(i+1)}$). Moreover,
        \[
        f^+_{e}(l_u(\rho))\le \sum _{e'\in C^e_{\theta}}c_{e'}.
        \]
        Thus, $f^+_{e}(l_u(\rho))< c_{e}$, yielding to a contradiction.
    \end{itemize}
    
    Therefore, there is no queue on $P$, and for the particle with entry time $\theta_{i+1}$, the travel time along $P$ is $\tau^{(i+1)}$. It follows that $H^{(i+1)}\subseteq E'_{\theta_{i+1}}$ and $\mathitsc{R}(H^{(i+1)})\subseteq E'_{\theta_{i+1}}$. 


    \paragraph{Inductive step: $i\ge 2$} Assume that statements (i) -- (iii) are valid for all $1, 2, \ldots,i-1$.
    To justify (i) for index $i$, Note that at time $\theta=\theta_{i}$, $l_t(\theta)-\theta=\tau^{(i)}$. Then any $s$-$t$ path $P$ with $\tau(P)>\tau^{(i)}$ is not contained in the current $E'_{\theta_{i}}$. Thus, $R_{\theta_{i}}\le \sum_{j=1}^{i} c^{(j)}$. By induction hypothesis on (i), for any $e\in \mathitsc{R}(\cup_{j=1}^{i-1} H^{(j)})$, we can prove that $e\in E'_{\theta_{i}}$. Moreover, from the induction hypothesis on (iii), we have $\mathitsc{R}( H^{(i)})\subseteq E'_{\theta_{i}}$. Thus, $R_{\theta_{i}}=\mathitsc{R}(\cup_{j=1}^{i} H^{(j)})$. From Corollary \ref{lem:BPAcut}, for an $\epsilon>0$, the rightmost cut moves rightward over the $(\theta_{i}-\epsilon, \theta_{i}]$.

    For any time $\theta\in [\theta_i, \theta_{i+1})$, from the definition of $\tau^{(i+1)}$, we know that any $s$-$t$ path $P$ with $\tau(P)\ge \tau^{(i+1)}$ is not active. Thus, $c(G_\theta)\le \sum_{h=1}^i c^{(h)}$. By inductive hypothesis, we know that when time goes from $\theta_1$ to $\theta_i$, the rightmost minimum $s$-$t$ cut shifts rightward. If (i) does not hold, without loss of generality, suppose that $\theta$ is the first time such that $R_\theta\subsetneq R_{\theta_{i}}$. Thus, there is no queue on edges after $R_{\theta_i}$, which is the rightmost minimum cut at $\theta-\epsilon$ ($\epsilon\ge 0$ and is infinitesimal). Thus, in $G_\theta$, there is no queue after $R_\theta$. Let $e= vw\in R_{\theta_i}\setminus E'_\theta$. Then there exists $s$-$t$ path $P\subseteq E'_{\theta-\epsilon}$ that contains $e$. Moreover, there is no queue on edges $e'\in P[w,t]$ (because these edges are ancestors of some edges in the minimum cut), and there is no queue on edges $P[s,w]$ (if such a queue exists, the flow particle must travel through $e$). Moreover, the travel time of a particle entering $G$ at time $\theta-\epsilon$ and reaches $t$ along $P$ is $l_t(\theta-\epsilon)-(\theta-\epsilon)$. Thus, the travel time along $P$ for a particle entering $G$ at time $\theta$ is $\tau(P)$, which is equal to or less that $\lim_{\epsilon\rightarrow 0}l_{t}(\theta-\epsilon)-(\theta-\epsilon)= l_t(\theta)-\theta$ (by the continuity of $l_v(\cdot)$). However, $e\in P\nsubseteq E'_{\theta}$, yielding a contradiction.

   Again, statement (ii) follows from (ii) instantly. The proof for (iii) follows almost identically to the counterpart in the base case.
\qed
\end{proof}

\begin{lemma}\label{IncreasingArrival}
If $c(G_\theta)<\mu(\theta)$ and $\tau^{(i)}\leq \ell_t(\theta)-\theta<\tau^{(i+1)}$, then at time $l_t(\theta)$, the arrival flow rate at sink $t$  is exactly  $\sum_{h=1}^ic^{(h)}$, that is, $\sum_{e\in \delta^-(t)}f_e^{-}(l_t(\theta))=c(G_\theta)=\sum_{h=1}^ic^{(h)}$. 
\end{lemma}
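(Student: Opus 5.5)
We use the structure from Lemma~\ref{lem:dynamic}. Fix $\theta$ with $c(G_\theta)<\mu(\theta)$ and $\tau^{(i)}\le l_t(\theta)-\theta<\tau^{(i+1)}$. Lemma~\ref{lem:dynamic} gives $c(G_\theta)=\sum_{h=1}^i c^{(h)}$, so it remains to show that the total outflow into $t$ at time $l_t(\theta)$ equals $c(G_\theta)$. We express the arrival rate through the thin-flow quantities. Since $x_e(\theta)=F_e^+(l_{\nu(e)}(\theta))$ and FIFO gives $F_e^-(l_t(\theta))=F_e^+(l_{\nu(e)}(\theta))$ for edges $e\in\delta^-(t)$ that are active at $\theta$, differentiating yields
\[
\sum_{e\in\delta^-(t)} f_e^-(l_t(\theta))\, l_t'(\theta)=\sum_{e\in\delta^-(t)} x_e'(\theta)=\mu(\theta).
\]
Inactive incoming edges of $t$ carry no flow that arrives at time $l_t(\theta)$, up to null sets. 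So the arrival rate equals $\mu(\theta)/l_t'(\theta)$, and the claim reduces to $l_t'(\theta)=\mu(\theta)/c(G_\theta)$.

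To obtain this derivative we invoke Lemma~\ref{lem:assign}. Its first hypothesis $c(G_\theta)\le\mu(\theta)$ holds, here even strictly. Its second hypothesis is that no edge of $R_\theta$ is a descendant of an edge of $E^*_\theta$, i.e.\ no queued edge lies downstream of the rightmost cut. This is exactly statement (ii) in the proof of Lemma~\ref{lem:dynamic}: on $[\theta_i,\theta_{i+1})$ the cut $R_\theta=R_{\theta_i}$ is the rightmost minimum cut, and the particle entering at $\theta$ never queues after passing $R_\theta$. Lemma~\ref{lem:assign} then yields $l_t'(\theta)=\mu(\theta)/c(G_\theta)$ for almost every such $\theta$. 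Combining this with the identity above gives the arrival rate $c(G_\theta)=\sum_{h=1}^i c^{(h)}$.

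An alternative, more direct check is useful for robustness. Each $e\in R_\theta$ gets $x_e'(\theta)=c_e\mu(\theta)/c(G_\theta)>c_e$, so a queue builds on $e$ and $e$ discharges at rate $c_e$. Downstream of $R_\theta$ there are no queues, by Lemma~\ref{lem:boundness2}, and the unique-suffix property of BPAs routes all of this outflow to $t$ along queue-free paths. The particle entering at $\theta$ reaches $t$ via every edge of $R_\theta$ at the same time $l_t(\theta)$. The flows arriving at $t$ at that instant are therefore exactly the capacity discharges of the cut edges, and they sum to $c(R_\theta)$.

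The main obstacle is the second route's need for synchronization. The downstream transit times from different cut edges differ, yet all of them must land at $t$ at the common time $l_t(\theta)$. This holds because every edge of $R_\theta$ is active for entry time $\theta$, but turning it into a rate statement needs the derivative argument above. For that reason I would rely primarily on the $l_t'$ computation via Lemma~\ref{lem:assign}. The remaining delicate points are measure-zero exceptional times, and I would handle them by the right-continuity and piecewise-constancy conventions.
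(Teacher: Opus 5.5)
Your proposal is correct and follows essentially the same route as the paper. The paper attaches a virtual zero-length, infinite-capacity edge $e^*=tt^*$, so that the arrival rate equals $x'_{e^*}(\theta)/l'_t(\theta)=\mu(\theta)/l'_t(\theta)$, which is exactly your summed FIFO identity over $\delta^-(t)$; it then substitutes $l'_t(\theta)=\mu(\theta)/c(G_\theta)$ from Lemma~\ref{lem:assign} and $c(G_\theta)=\sum_{h=1}^i c^{(h)}$ from Lemma~\ref{lem:dynamic}, as you do. Your explicit check of the second hypothesis of Lemma~\ref{lem:assign} via statement (ii) in the proof of Lemma~\ref{lem:dynamic} is a step the paper leaves implicit, whereas your secondary ``direct check'' is unnecessary and slightly loose, since $x'_e=f^+_e\,l'_{\nu(e)}>c_e$ does not by itself force a queue on $e$.
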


\begin{proof}
  We expand $G$ by adding a virtual vertex $t^*$ and a virtual edge $e^*=tt^*$ outgoing from $t$ with zero length and infinite capacity. The Nash flow is also expanded accordingly. It is obvious that $\sum_{e\in \delta^-(t)}f_e^{-}(l_t(\theta))= f^+_{e^*}(l_t(\theta))$. Then by the definition of $x'_e(\theta)$ and Lemma~\ref{lem:assign}, we have
     \[
     \sum_{e\in \delta^-(t)}f_e^{-}(l_t(\theta))=\frac{x'_{e^*}(\theta)}{l'_t(\theta)}=\frac{\mu(\theta)}{\mu(\theta)/c(G_\theta)}=c(G_\theta)=\sum_{h=1}^ic^{(h)},
     \]
     confirming the lemma.
     \qed
\end{proof}

\begin{lemma}\label{SteadyArrival}
Whenever  $c(G_\theta)\geq \mu(\theta)$, i.e., the capacity of the dynamic minimum cut exceeds the inflow rate, it holds that $l_t'(\theta)=1$ and  $\sum_{e\in \delta^-(t)}f_e^{-}(l_t(\theta))=\mu(\theta)$, i.e., the arrival flow rate at sink $t$ is equal to $\mu(\theta)$.
\end{lemma}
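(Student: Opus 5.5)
The plan is to show $l'_t(\theta)=1$ first and then read off the arrival rate exactly as in the proof of Lemma~\ref{IncreasingArrival}. Add a virtual sink edge $e^*=tt^*$ with zero length and infinite capacity. Then
\[
\sum_{e\in\delta^-(t)}f_e^-(l_t(\theta))=f^+_{e^*}(l_t(\theta))=\frac{x'_{e^*}(\theta)}{l'_t(\theta)}=\frac{\mu(\theta)}{l'_t(\theta)}.
\]
So it suffices to prove $l'_t(\theta)=1$ for almost every $\theta$ with $c(G_\theta)\ge\mu(\theta)$.

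For the lower bound, I use Theorem~\ref{thm:eq} with $l'_s\equiv1$. Along any active path the labels are nondecreasing, since every case of the theorem gives $l'_w\ge l'_v$ on edges carrying flow. Equivalently, $l_t(\theta)-\theta$ is the minimum over paths of a queue-plus-transit time, and queue delays are nonnegative. Picking a path in the support of $\mathbf{x}'(\theta)$ then gives $l'_t(\theta)\ge1$ in the nondegenerate case. The degenerate case $\mu(\theta)=0$ makes the arrival statement trivial.

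For the upper bound, suppose $l'_t(\theta)>1$ on a set of positive measure. Take the set $S$ of vertices $v$ with $l'_v(\theta)=1$; it contains $s$ and not $t$. By Theorem~\ref{thm:eq}, every active edge $e=vw$ leaving $S$ satisfies one of two conditions. Either $e\in E^*_\theta$ with $x'_e=c_e l'_w>c_e$, or $e\notin E^*_\theta$ with $x'_e/c_e=l'_w>1$. (An edge with $x'_e=0$ would force $l'_w\le l'_v=1$.) Because $G_\theta$ is a BPA, Lemma~\ref{lem:path-cut} makes the active edges leaving $S$ an $s$-$t$ cut of $G_\theta$, and each $s$-$t$ path meets it once. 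Summing over this cut gives
\[
\mu(\theta)=\sum_{e\text{ leaving }S}x'_e(\theta)>\sum_{e\text{ leaving }S}c_e\ge c(G_\theta)\ge\mu(\theta),
\]
which is a contradiction.

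The hard part is the cut step, where non-active edges may leave $S$ and edges may carry zero flow. I handle this by restricting to $G_\theta$, which is itself a BPA. Within $G_\theta$, any edge $e=vw$ with $v\in S$, $w\notin S$ and $x'_e=0$ forces $l'_w\le1$ by the first case of Theorem~\ref{thm:eq}. With $l'_w\ge1$ from the lower bound, this gives $l'_w=1$, so such edges cannot leave $S$. The flow-conservation identity then holds over this cut of the static flow on $G_\theta$.

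If needed, I would instead use right-continuity as in Lemma~\ref{lem:assign}, working on a small interval $[\theta,\theta+\delta)$ where $G_\theta$ and $R_\theta$ are constant. I would also invoke Lemma~\ref{lem:boundness2} to rule out queues forming after $R_\theta$ when $c(G_\theta)\ge\mu(\theta)$.
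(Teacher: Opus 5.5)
\textbf{Gap in the lower bound.} The gap is in your lower bound, and your upper bound depends on it.

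The third case of Theorem~\ref{thm:eq} gives $l'_w(\theta)=x'_e(\theta)/c_e$ on $e=vw\in E^*_\theta$, with no relation to $l'_v(\theta)$. On a queued edge, $\frac{\mathrm{d}}{\mathrm{d}\theta}z_e(l_v(\theta))=x'_e(\theta)-c_e\,l'_v(\theta)$. So $l'_w\ge l'_v$ there says exactly that this queue is not draining, and Theorem~\ref{thm:eq} does not give you that. Nonnegativity of queue delays only bounds the level, $l_t(\theta)-\theta\ge\tau(G)$, not its derivative.

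Your argument never uses that $\mu(\cdot)$ is nondecreasing, and without that hypothesis the statement is false. Take a single edge $st$ with $c=1$, $\tau=0$, and inflow $2$ on $[0,1)$ and $1/2$ afterwards. For $\theta\in[1,3)$ we have $c(G_\theta)=1\ge\mu(\theta)$, yet $l'_t(\theta)=1/2$ and the arrival rate is $1$.

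So $l'_t\ge1$ has to come from the BPA structure together with monotone inflow, i.e., from showing that no flow-carrying active queue drains. The paper gets this from the dynamics of Lemma~\ref{lem:dynamic}: the rightmost cut moves rightward, and nothing queues after $R_\theta$. It then runs a first-failure argument after $\theta_m$ that pins the latency at $\tau^{(m)}$ through a queue-free active path $P_m\subseteq H^{(m)}$. Its upper bound $l'_t\le1$ is argued separately, edge by edge on $R_\theta$.

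\textbf{The cut step needs the same fact.} Lemma~\ref{lem:path-cut} concerns minimal cuts, and $\delta^+(S)$ for $S=\{v: l'_v(\theta)=1\}$ need not be minimal. A flow path may leave $S$, re-enter through an edge $uv$ with $l'_u>1=l'_v$, and leave again. Then conservation only gives $\sum_{e\text{ leaving }S}x'_e\ge\mu(\theta)$, and your chain of inequalities yields no contradiction.

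Any such re-entering edge with $x'_e>0$ must be a queued edge whose queue is draining; a non-queued one would force $l'_v\ge l'_u>1$. So excluding it needs the no-draining property. The same property is needed to ensure $l'_w>1$, rather than $l'_w<1$, at every head $w\notin S$, which your claim $x'_e>c_e$ on the cut relies on.

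Your fallback via Lemma~\ref{lem:boundness2} does not close the gap. That lemma only excludes queues strictly downstream of $R_\theta$. Draining on or upstream of $R_\theta$ is exactly what produces $l'_t<1$, as in the single-edge example, where the draining edge is $R_\theta$ itself.
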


\begin{proof}

We first prove that if $R_{\theta}\ge \mu$, then $l'_t(\theta)\le 1$.

\purplecomment{Note that the equilibrium characterization is right-continuous, there exists sufficiently small constant $\delta>0$ such that $G_\theta$ and $R_\theta$ remain unchanged during the interval $[\theta,\theta+\delta)$. Follow the same proof in  Lemma \ref{lem:assign}, for every edge $e\in R_\theta$, it holds $l_t'(\theta)=l_{\omega(e)}'(\theta)$.   }

\purplecomment{If there exists edge(s) $b$ in $R_\theta$ or is an ancestor of some edges of $R_\theta$, there is $x'_b(\theta)=0$. We can get 
\[
l'_t(\theta)\le \cdots\le l'_{\omega(b)}(\theta)\le l'_{\nu(b)}(\theta)\le \cdots \le l'_s(\theta)=1,
\]
proving the claim. Assuming that for each edge $e$ in $R_\theta$ or is an ancestor of some edges of $R_\theta$, there is $x'_e(\theta)>0$.}

\purplecomment{Note that $c(G_\theta)=\sum_{e\in R_\theta}c_e\ge \mu(\theta)$ and $\sum_{e\in R_\theta}x'_e(\theta)$. So if $x'_a(\theta)\ge c_a+\Delta$ for some edge $a\in R_\theta$, then $x'_b(\theta)\le c_b-\phi$ for another edge $b\in R_\theta$, where $\Delta$ and $\Phi$ are positive constants. Then from the above we know $\frac{x'_b(\theta)}{c_b}<1<\frac{x'_a(\theta)}{c_a}\le l_{\omega(a)}'(\theta)=l_t'(\theta)=l_{\omega(b)}'(\theta)$.  By Theorem \ref{thm:eq} and the definition of $x_e'(\theta)$, it must be the case that $l_{\omega(b)}'(\theta)=l_{\nu(b)}'(\theta)$ and $f_b^+(l_{\nu(b)})(\theta)<c_b$.  }

\purplecomment{If there exists an  $s$-$\nu(b)$ path $P$ of $E_\theta'$ such that $\forall e\in P$,   $e\notin E_\theta^*$ and $f_{e}^+(l_{\nu(e)}(\theta))< c_e$, then we can deduce that $l_t'(\theta)=l_{\nu(b)}'(\theta)=\cdots=l_{s}'(\theta)=1$, contradiction. Otherwise, by the BPA structure, the subgraph (denoted as $B$) consisting of all $s$-$\nu(b)$ paths in $G_\theta$ contains an $s$-$\nu(b)$ cut $F$ such that each edge $e\in F$ either contains a queue at time $\theta$ or $f_{e}^+(l_{\nu(e)}(\theta))\ge c_e$, and no ancestors of any edge in $F$ (before $\nu(b)$) satisfying this. Then the outflow rate satisfies $f_{e}^-(l_{\omega(e)}(\theta))= c_e, \forall e\in F$ and $c(F)=\sum_{e\in F}f_{e}^-(l_{\omega(e)}(\theta))=f_b^+(l_{\nu(b)})(\theta)$. Recall that $c_b\le c(F)$. Then for all edges in $R(\theta)$, $x'(e)\le c_e$} 

\purplecomment{If there exists an  $s$-$\nu(e)$ path $P$ of $E_\theta'$ such that $\forall e\in P$,  $e\notin E_\theta^*$ and $f_{e}^+(l_{\nu(e)}(\theta))< c_e$, then we can deduce that $l_{\nu(e)}'(\theta)=\cdots=l_{s}'(\theta)=1$, $l'_{\omega(e)}(\theta)\le \max\{l_{\nu(e)}'(\theta), \frac{x'_e(\theta)}{c_e}\}\le 1$. Otherwise, by the BPA structure, the subgraph (denoted as $B$) consisting of all $s$-$\nu(e)$ paths in $G_\theta$ contains an $s$-$\nu(e)$ cut $F$ such that each edge $e'\in F$ either contains a queue at time $\theta$ or $f_{e'}^+(l_{\nu(e')}(\theta))\ge c_{e'}$, and no ancestors of any edge in $F$ (before $\nu(e)$) satisfying this. Then the outflow rate satisfies $f_{e'}^-(l_{\omega(e')}(\theta))= c_{e'}, \forall e'\in F$. Hence, and $f_e^+(l_{\nu(e)})(\theta)=\sum_{e'\in F}f_{e}^-(l_{\omega(e')}(\theta))=c(F)\ge c_e$. By the definition of $x_{e}(\theta)$, it follows $l_{\nu(e)}'(\theta)=\frac{x'_e(\theta)}{f_e^+(l_{\nu(e)})(\theta)}\le \frac{x'_e(\theta)}{c_e}$. Hence, we have $l'_{\omega(e)}(\theta)=\frac{x'_e(\theta)}{c_e}\le 1$.
Alongside the fact that $l'_t(\theta)=\min_{e\in R_\theta}l'_{\omega(e)}(\theta)$, giving the result $l'_t(\theta)\le 1$. }

It remains to show that $l_t'(\theta)=1$.
Let $\theta_m$ be defined as in the proof of Lemma~\ref{lem:dynamic}. It follows that
\begin{align}\label{eq:ac}
         P_m\subseteq E'_{\theta_m}\text{ for any } s\text{-}t\; \text{path}\;P_m\subseteq H^{(m)} \; \text{with} \;\tau(P_m)=\tau^{(m)}.
    \end{align}
    Moreover, as $P_m\cap E^*_{\theta_m}=\emptyset$, we have $l'_t(\theta_m)\ge l'_s(\theta_m)=1$, leading to $l'_t(\theta_m)= 1$. 

    On the contrary, suppose that \purplecomment{$l'_t(\vartheta) \neq 1$} for some $\vartheta\ge \theta_m$ or \eqref{eq:ac} does not always hold. Set 
    \[
    \theta:=\inf\{\vartheta\ge \theta_m\,|\,l'_t(\vartheta)\neq 1\; \text{or}\; \eqref{eq:ac}\; \text{does not holds} \}.
    \]
    The flow particle that entering $s$ at $\theta$ reaches $t$ at $\theta+\tau^{(m)}$, Thus, any path $P$ with $\tau(P)>\tau^{(m)}$ is not contained in $E'_{\theta}$, implying that $R_{\theta}\le \sum_{i=1}^m c^{(i)}$. For any $e=vw\in \mathitsc{R}(H^{(m)})$, if $e$ is not contained in any cut in $E'_{\theta}$, let $\rho\le \theta$ be the earliest time at which $e$ is excluded from $E'_{\rho}$. For any $\epsilon>0$, then there exists $s$-$t$ path $P\subseteq E'_{\rho-\epsilon}$ containing $e$. There is no queue on edges in $ P[w,t]$, and there is no queue on edges $P[s,w]$. Moreover, the travel time of a particle entering $s$ at time $\rho-\epsilon$  along $P$ is $l_t(\rho-\epsilon)-(\rho-\epsilon)$. Thus, the travel time along $P$ of a particle with entry time $\rho$ is $\tau(P)$, which is equal to or less than $\lim_{\epsilon\rightarrow 0}l_{t}(\rho-\epsilon)-(\rho-\epsilon)= l_t(\rho)-\rho$ (by the continuity of $l_v(\cdot)$). However, $e\in P\nsubseteq
    E'_{\rho}$, yielding a contradiction. Thus, we have $c(G_{\theta})\ge \mu(\theta)$, implying $l'_t(\theta)\le 1$. 

    If $P_m\nsubseteq E'_{\theta}$, again $c(G_{\theta})<\mu(\theta)$. By Lemma \ref{lem:assign}, we obtain $l'_t(\theta)>1$. Hence, \eqref{eq:ac} holds at $\theta$.

    If $l'_t(\theta)< 1$ and for any $\eta<\theta$ there is $l'_t(\eta)=1$, we have $l'_t(\theta)-\theta=\tau^{(m)}$. Combine with the fact that $\tau(P_m)=\tau^{(m)}$ and $P_m\subseteq E'_{\theta}$, there is no queue on $P_m$, leading to $l'_t(\theta)\ge 1$, leading to a contradiction. Hence, such $\theta$ does not exist. 

    As a conclusion, we have if $c(G_\theta)\geq \mu(\theta)$, it holds that $l'_t(\theta)=1$, following a similar argument in Lemma \ref{IncreasingArrival}, there is $\sum_{e\in \delta^-(t)}f_e^{-}(l_t(\theta))=\mu(\theta)$.
    \qed
\end{proof}

\begin{corollary}\label{steady}
Once $c(G_\theta)\ge \mu \ge\mu(\theta)$, which implies $\mu(\theta)>\sum_{i=1}^{m-1}c^{(i)}$, it holds $l_t(\theta)-\theta=\tau^{(m)}$ a.e.. Specifically, if $c(G_\theta)\geq \mu =\mu(\theta)$, then the Nash flow reaches a steady state, and the corresponding arrival flow rate at sink $t$ will always be $\mu$, i.e., $\sum_{e\in \delta^-(t)}f_e^{-}(l_t(\theta))=\mu$.
\end{corollary}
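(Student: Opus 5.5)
The plan is to combine Lemma~\ref{SteadyArrival} with Lemma~\ref{lem:dynamic} and the structure of the capacity-diameter slicing. The parenthetical claim comes first. The slicing is produced by Algorithm~\ref{alg:flow} with $\sum_{i=1}^m|\xi^{(i)}|=\mu$ (Corollary~\ref{lem:path}). Every slice before the last is a maximum flow in its residual shortest-path graph, so $|\xi^{(i)}|=c^{(i)}$ for $i<m$, and $|\xi^{(m)}|>0$. Hence $\mu>\sum_{i=1}^{m-1}c^{(i)}$.

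Next, suppose $c(G_\theta)\ge\mu\ge\mu(\theta)$. I would argue that the latency $l_t(\theta)-\theta$ must already lie in $[\tau^{(m)},\infty)$. If instead $\tau^{(i)}\le l_t(\theta)-\theta<\tau^{(i+1)}$ for some $i<m$, then Lemma~\ref{lem:dynamic} gives $c(G_\theta)=\sum_{h=1}^{i}c^{(h)}\le\sum_{h=1}^{m-1}c^{(h)}<\mu$, which is a contradiction. The latency also cannot be below $\tau^{(1)}=\tau(G)$, because of free-flow travel times. So $l_t(\theta)-\theta\ge\tau^{(m)}$, and this is exactly the regime $\theta\ge\theta_m$ in the proof of Lemma~\ref{SteadyArrival}.

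To show the latency equals $\tau^{(m)}$ rather than exceeding it, I would reuse the invariant \eqref{eq:ac} established there. For $\vartheta\ge\theta_m$, every path $P_m\subseteq H^{(m)}$ of length $\tau^{(m)}$ stays active and queue-free, and $l_t'(\vartheta)=1$. Since $l_t(\theta_m)-\theta_m=\tau^{(m)}$ and $l_t'\equiv1$ a.e.\ afterwards, absolute continuity of $l_t$ gives $l_t(\theta)-\theta=\tau^{(m)}$ for all such $\theta$. One could also argue more directly: an active, queue-free path of length $\tau^{(m)}$ bounds the latency above by $\tau^{(m)}$. The main obstacle is making sure that the hypothesis $c(G_\theta)\ge\mu(\theta)$ holds throughout $[\theta_m,\theta]$, which is what lets us invoke $l_t'=1$. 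To handle this I would use the contradiction argument of Lemma~\ref{SteadyArrival}: at the first violation time, the rightmost cut $\mathitsc{R}(H^{(m)})$ cannot have left $E'$. Combined with the monotonicity of $\mu(\cdot)\le\mu$, this keeps $c(G_\vartheta)\ge\mu(\vartheta)$.

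For the second sentence, suppose $\mu(\theta)=\mu$ and $c(G_\theta)\ge\mu$. The previous step gives constant latency $\tau^{(m)}$, so $G_\vartheta$ stays fixed from then on and no queues change. Lemma~\ref{SteadyArrival} then gives arrival rate $\sum_{e\in\delta^-(t)}f_e^-(l_t(\vartheta))=\mu(\vartheta)=\mu$ for all later $\vartheta$, which is the steady state.
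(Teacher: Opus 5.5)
Your overall route is the one the paper intends. The corollary has no separate proof there. It is read off from Lemma~\ref{lem:dynamic}, which together with $c(G_\theta)\ge\mu>\sum_{i<m}c^{(i)}$ excludes latencies below $\tau^{(m)}$. It also uses the first-violation argument inside the proof of Lemma~\ref{SteadyArrival}, which yields $l_t'\equiv1$ and \eqref{eq:ac} on $[\theta_m,\infty)$. Two steps need repair.

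First, the parenthetical. Algorithm~\ref{alg:flow} gives you $\mu>\sum_{i=1}^{m-1}c^{(i)}$. This is correct and is all your second step uses, but the statement asserts the inequality for $\mu(\theta)$, and $\mu\ge\mu(\theta)$ points the wrong way. The literal claim is true, but it needs the dynamics and the monotonicity of $\mu(\cdot)$. Take $m\ge2$; you have already shown $\theta\ge\theta_m$. Just before $\theta_m$ the latency passes from $\tau^{(m-1)}$ to $\tau^{(m)}>\tau^{(m-1)}$ while lying in $[\tau^{(m-1)},\tau^{(m)})$. Hence $l_t'(\vartheta)>1$ on a set of positive measure of such $\vartheta<\theta$. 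By Lemma~\ref{SteadyArrival}, read contrapositively, these $\vartheta$ satisfy $\mu(\vartheta)>c(G_\vartheta)$. Lemma~\ref{lem:dynamic} gives $c(G_\vartheta)=\sum_{i<m}c^{(i)}$, so $\mu(\theta)\ge\mu(\vartheta)>\sum_{i<m}c^{(i)}$. For $m=1$ the claim reduces to $\mu(\theta)>0$.

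Second, your last paragraph infers from constant latency $\tau^{(m)}$ that $G_\vartheta$ stays fixed and no queues change. This is unjustified, and the part about queues is false. Take the BPA on $\{s,v,t\}$ with edges $sv$ ($\tau=0$, $c=1$), $vt$ ($\tau=0$, $c=1/2$), $st$ ($\tau=1$, $c=1$), and let $\mu(\theta)\equiv 6/5$. Here $m=2$, $\tau^{(2)}=1$, $\theta_2=5/7$, and $c(G_\theta)=3/2\ge\mu$ for $\theta\ge5/7$. Yet on $[5/7,1]$ the thin flow is $x'_{sv}=1/2$, $x'_{st}=7/10$, $l_v'=1/2$, $l_t'=1$. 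So the queue on $sv$ drains from $1/7$ to $0$ while the queue on $vt$ keeps growing, all with $l_t(\theta)-\theta\equiv1$.

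You do not need any freezing. Once $l_t(\vartheta)-\vartheta\equiv\tau^{(m)}$, you have $l_t'=1$ a.e. The computation in Lemma~\ref{IncreasingArrival} (virtual edge $tt^*$ with $x'_{tt^*}(\vartheta)=\mu(\vartheta)$) then gives arrival rate $\mu(\vartheta)/l_t'(\vartheta)=\mu$. The ``steady state'' in the statement should accordingly be read as constant latency and constant arrival rate, not stationarity of every queue.
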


Combining Lemma \ref{lem:assign} and Lemmas \ref{lem:dynamic}, \ref{IncreasingArrival}, and \ref{SteadyArrival} with Corollary~\ref{steady}, we have the following characterization of the Nash flow on BPA networks.
\begin{proposition}\label{char:BPAflow}
   Given any instance $(G,s,t,{c}, \tau,\mu(\theta))$, where $G$ is a BPA and $\mu(\theta)$ is a monotone non-decreasing \purplecomment{piecewise-constant} function of $\theta$ with maximum rate $\mu$. For its Nash flow, the travel latency $l_t(\theta)-\theta$ from source $s$ to sink $t$ is monotone non-decreasing with time $\theta$ and its maximum latency is $\tau^{(m)}$. Besides, the arrival time and arrival flow at the sink $t$ satisfy: 
   \begin{align*}
      l_t'(\theta)-1=
        \begin{cases}
            \max\{\frac{\mu(\theta)}{c^{(1)}},1\}-1 &\text{when}\quad l_t(\theta)-\theta\in [\tau^{(1)}, \tau^{(2)}),\\
            \;\vdots\\
            \max\{\frac{\mu(\theta)}{\sum_{i=1}^{m-1}c^{(i)}},1\}-1 &\text{when}\quad l_t(\theta)-\theta\in [\tau^{(m-1)}, \tau^{(m)}),\\
            0&\text{when}\quad l_t(\theta)-\theta= \tau^{(m)}.
        \end{cases}
    \end{align*}
    \begin{align*}
       \sum_{e\in \delta^-(t)}f_e^{-}(l_t(\theta))=
        \begin{cases}
            \min\{c^{(1)},\mu(\theta)\} &\text{when}\quad l_t(\theta)-\theta\in [\tau^{(1)}, \tau^{(2)}),\\
            \;\vdots\\
            \min\{\sum_{i=1}^{m-1} c^{(i)},\mu(\theta)\} &\text{when}\quad l_t(\theta)-\theta\in [\tau^{(m-1)}, \tau^{(m)}),\\
            \mu&\text{when}\quad l_t(\theta)-\theta= \tau^{(m)}.
        \end{cases}
    \end{align*}
\end{proposition}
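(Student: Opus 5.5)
The plan is to assemble Proposition~\ref{char:BPAflow} from the preceding lemmas by partitioning entry times according to the latency band containing $l_t(\theta)-\theta$.

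\textbf{Step 1: the band structure.} Define $\theta_1:=0$ and $\theta_i$ for $i\le m$ as in the proof of Lemma~\ref{lem:dynamic}. Since $l_t$ is continuous, $\theta\mapsto l_t(\theta)-\theta$ is continuous. At $\theta=0$ the network is empty, so $l_t(0)-0=\tau^{(1)}=\tau(G)$. I will show that the latency never decreases. Then the bands $[\tau^{(i)},\tau^{(i+1)})$ are visited in order, on the consecutive intervals $[\theta_i,\theta_{i+1})$.

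\textbf{Step 2: monotonicity and the formula for $l_t'$.} Fix a band $i\le m-1$. By Lemma~\ref{lem:dynamic}, $c(G_\theta)=\sum_{h=1}^ic^{(h)}$ there. Facts (i)--(ii) established inside that proof say that the particle entering at $\theta$ never queues after passing $R_\theta$. This supplies the hypothesis of Lemma~\ref{lem:assign}. So whenever $\mu(\theta)\ge c(G_\theta)$, we get $l_t'(\theta)=\mu(\theta)/\sum_{h\le i}c^{(h)}\ge1$. When $\mu(\theta)\le c(G_\theta)$, Lemma~\ref{SteadyArrival} gives $l_t'(\theta)=1$. In both cases $l_t'(\theta)=\max\{\mu(\theta)/\sum_{h\le i}c^{(h)},1\}$ almost everywhere. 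Hence $l_t'-1\ge0$, and the latency is nondecreasing (it is absolutely continuous, being Lipschitz by the thin-flow characterization).

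Once the latency reaches $\tau^{(m)}$, every path of length at most $\tau^{(m)}$ is active. The argument around~\eqref{eq:ac} in the proof of Lemma~\ref{SteadyArrival} then gives $c(G_\theta)\ge\mu\ge\mu(\theta)$. By Corollary~\ref{steady}, $l_t'=1$, so the latency stays at $\tau^{(m)}$.

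\textbf{Step 3: the maximum latency.} The latency cannot exceed $\tau^{(m)}$. If it did, continuity would give a first time at which it equals $\tau^{(m)}$, and from that time on it is frozen by Step 2. The value $\tau^{(m)}$ is actually attained, for two reasons:
\begin{itemize}
\item in any band $i<m$ one has $\sum_{h\le i}c^{(h)}<\mu$, since slice $i$ was not the truncated last slice;
\item $\mu(\theta)$ eventually equals $\mu$.
\end{itemize}
Thus $l_t'$ is eventually bounded below by a constant larger than $1$ in each band, so each band is exited in finite time. Therefore $\sup_\theta(l_t(\theta)-\theta)=\tau^{(m)}$.

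\textbf{Step 4: arrival rates.} Attach the virtual edge $tt^*$ as in Lemma~\ref{IncreasingArrival}. The arrival rate is then $\mu(\theta)/l_t'(\theta)$. Substituting the formula from Step 2 gives $\min\{\sum_{h\le i}c^{(h)},\mu(\theta)\}$ in band $i$. In the final band the rate is $\mu(\theta)$, which equals $\mu$ once steady.

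\textbf{Main obstacle.} The most delicate point is the boundary case $\mu(\theta)=c(G_\theta)$ together with the transition instants $\theta_i$. Lemma~\ref{lem:assign} and Lemma~\ref{SteadyArrival} must agree there, and the fact that the rightmost cut only moves rightward (Corollary~\ref{lem:BPAcut}) must hold uniformly across the piecewise-constant jumps of $\mu(\theta)$. I would handle this by working on the finitely many maximal intervals on which both $\mu(\cdot)$ and $G_\theta$ are constant, using right-continuity, and discarding the null set of breakpoints.
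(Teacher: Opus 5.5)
Your route is the paper's. The paper obtains Proposition~\ref{char:BPAflow} simply by combining Lemmas~\ref{lem:assign}, \ref{lem:dynamic}, \ref{IncreasingArrival}, \ref{SteadyArrival} and Corollary~\ref{steady}. Your Steps~1--4 make explicit what the paper leaves implicit: the bands are visited in order; $\sum_{h\le i}c^{(h)}<\mu$ for $i<m$, so each band is left in finite time and $\tau^{(m)}$ is attained; and the arrival rate is computed as $\mu(\theta)/l_t'(\theta)$ via the virtual edge.

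One sentence in Step~2 is false and should be removed. When the latency reaches $\tau^{(m)}$, it is \emph{not} true that every $s$-$t$ path of length at most $\tau^{(m)}$ is active. Take the BPA with edges $sw,sv,vw,wt,st$ of lengths $0,0,3,0,10$ and capacities $100,100,100,1,1$, and let $\mu=2$. The slicing is $(c^{(1)},\tau^{(1)})=(1,0)$, $(c^{(2)},\tau^{(2)})=(1,10)$. Since $l_w(\theta)=l_v(\theta)=\theta$ for all $\theta$, the edge $vw$ never satisfies $l_w\ge l_v+\tau_{vw}$. Hence the path $svwt$, of length $3\le\tau^{(2)}$, is never active. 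The conclusion $c(G_\theta)\ge\mu$ that you need does not depend on this claim. The argument around \eqref{eq:ac} uses only two facts: the length-$\tau^{(m)}$ paths of $H^{(m)}$ become and stay active, and previously active cut edges are never dropped. Cite that argument directly.

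Your Step~4 reading of the last case is the correct one. In the final band the arrival rate is $\mu(\theta)$, and the value $\mu$ in the statement is right only once $\mu(\theta)=\mu$. For example, take two parallel edges of lengths $0,1$ and capacities $1,10$, with $\mu(\theta)=2$ on $[0,100)$ and $\mu(\theta)=5$ afterwards. For entry times in $[1,100)$ the latency equals $\tau^{(2)}=1$, but the arrival rate there is $2$. This is also how the paper itself uses the result, in Corollary~\ref{steady} and in \eqref{rewrite}.
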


 From Proposition \ref{char:BPAflow}, we can see that, if $G$ is a single BPA, and its inflow rate $\mu(\theta)$ is a monotone non-decreasing \purplecomment{piecewise-constant} function of $\theta$ with maximum rate $\mu$, then its social cost is $\tau^{(m)}$.

 \subsection{Flows in Chains of BPAs}
 We now turn to the general case where $G$ is a chain of BPAs, and show that the same conclusion continues to hold.
\begin{theorem}\label{thm:noBP}
 Given any instance $(G,s,t,c, \tau,\mu(\theta))$, where $G$ is a chain of BPAs and $\mu(\theta)$ is non-decreasing \purplecomment{piecewise constant} with $\max_\theta \mu(\theta)=\mu$. The social cost of its Nash flow equals $\tau(G, c, \mu)$. Consequently, it does not admit Braess's Paradox.
\end{theorem}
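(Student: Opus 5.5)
The plan is to induct on the number $k$ of BPA blocks $B_1,\dots,B_k$ in the chain, with $s=v_0,v_1,\dots,v_k=t$ the consecutive polar vertices. Since $G$ is series-parallel and the blocks are connected in series, every $s$-$t$ path is a concatenation of source-sink paths of the blocks. Lemma~\ref{lem:flow-invariant} (Case 1, series connection) then gives an SPF decomposition of $(G,\mu)$ whose restriction to each block is an SPF decomposition of $(B_j,\mu)$. Hence $\tau(G,c,\mu)=\sum_{j=1}^k\tau(B_j,c,\mu)$, with each summand equal to the terminal diameter $\tau_j^{(m_j)}$ of block $B_j$.

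On the dynamic side, the Nash flow on $G$ restricted to block $B_j$ is a Nash flow on $B_j$ whose inflow is the arrival flow at $v_{j-1}$, reparametrized by $l_{v_{j-1}}$. This holds because every particle must pass $v_{j-1}$, and by FIFO and the label recursion the earliest arrival at $v_j$ is obtained by first arriving as early as possible at $v_{j-1}$. I will therefore prove a strengthened statement by induction on $j$: the arrival rate at $v_j$, viewed as a function of time, is non-decreasing and piecewise constant with maximum $\mu$. Moreover, $l_{v_j}(\theta)-l_{v_{j-1}}(\theta)$ is non-decreasing in $\theta$ with supremum $\tau(B_j,c,\mu)$, and it is attained once the steady state is reached.

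For $j=1$ this is Proposition~\ref{char:BPAflow}. In the proposition, the arrival rate $\min\{\sum_{h\le i}c^{(h)},\mu(\theta)\}$ is non-decreasing because both $i$ and $\mu(\theta)$ are non-decreasing along the arrival time $l_t(\theta)$, and $l_t$ is monotone. The rate eventually equals $\mu$ when $l_t(\theta)-\theta=\tau^{(m)}$. For the inductive step, Proposition~\ref{char:BPAflow} is applied to $B_j$ with inflow equal to the arrival rate at $v_{j-1}$. By hypothesis this inflow is non-decreasing and piecewise constant with maximum $\mu$, so the hypotheses of the proposition are met. Summing the block latencies, $l_t(\theta)-\theta=\sum_j\bigl(l_{v_j}(\theta)-l_{v_{j-1}}(\theta)\bigr)$ is a sum of non-decreasing functions, each converging to $\tau(B_j,c,\mu)$. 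Its supremum is therefore $\sum_j\tau(B_j,c,\mu)=\tau(G,c,\mu)$. The step I expect to be most delicate is verifying that the maximum rate $\mu$ is actually reached downstream, so that the inflow entering the next block has maximum exactly $\mu$ and that block's terminal diameter $\tau(B_j,c,\mu)$ is attained. I would handle it with Corollary~\ref{steady} together with conservation of total mass: if the rate stayed strictly below $\mu$ forever, the delays in $B_{j-1}$ would grow without bound, contradicting the finite bound $\tau^{(m)}$ from Proposition~\ref{char:BPAflow}. A second point to check is that the sum of suprema is attained jointly. Since all terms are monotone and each eventually reaches its limit, the limit of the sum equals the sum of the limits, so this follows.

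For Braess's paradox, let $H$ be any $s$-$t$ subgraph of $G$ with $c(H)\ge\mu$; if $c(H)<\mu$ the social cost of $H$ is infinite. $H$ is again irredundant after discarding edges not on $s$-$t$ paths, which does not affect the Nash flow. It contains no $M_1,M_2,M_3$ minor, so by Lemma~\ref{lem:topology} it is a chain of BPAs. The first part then gives $SC(H)=\tau(H,c,\mu)$, and Lemma~\ref{lem:monotone} gives $\tau(G,c,\mu)\le\tau(H,c,\mu)$. Hence $SC(G)\le SC(H)$, and no BP occurs.
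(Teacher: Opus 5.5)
Your proposal is correct and follows the paper's proof: you apply Proposition~\ref{char:BPAflow} block by block, with each block's arrival flow serving as the next block's inflow, then use $\tau(G,c,\mu)=\sum_j\tau(B_j,c,\mu)$, and finish with Lemma~\ref{lem:monotone} for the subgraph comparison. The only difference is that you make explicit several points the paper leaves implicit: that the Nash flow restricts to a Nash flow on each block, that the rate $\mu$ is actually reached downstream, that the block suprema are attained jointly, the case $c(H)<\mu$, and the use of Lemma~\ref{lem:topology} to see that $H$ is again a chain of BPAs.
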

\begin{proof}
    Suppose that $G$ consists of a sequence of BPAs connected in series, denoted by $B_1, \ldots, B_k$, where the sink of $B_i$ coincides with the source of $B_{i+1}$ for $i=1,\ldots,k-1$. Since the inflow rate of $B_1$ is exactly $\mu(\theta)$, the above lemmas imply that the arrival flow at the sink of $B_1$, and hence the inflow rate of $B_2$, is monotone non-decreasing \purplecomment{piecewise constant} over time and finally reaches $\mu$. Repeatedly applying the same argument, we conclude that the inflow rate of each $B_i$ is monotone non-decreasing \purplecomment{piecewise constant} with maximum rate $\mu$.  Recall that $\tau(B_i, c, \mu)$ denotes the maximum path length $\tau^{(m)}(B_i)$ output by the Algorithm~\ref{alg:flow}'s decomposition on $B_i$. Then, by Proposition \ref{char:BPAflow},  the Nash flow will finally reach a steady state, and the maximum latency experienced by any flow particle traversing $B_i$ is exactly $\tau(B_i, c, \mu)$. Note that for $G$, its $\tau(G, c, \mu)=\sum_{i=1}^k \tau(B_i, c, \mu)$. So the maximum latency experienced by any particle traversing the network $G$ is exactly $\tau(G, c, \mu)$. 
    
   For any $s$-$t$ subgraph $H$ of $G$ with $c(H)\ge \mu$, apparently, $H$ is also a chain of BPAs. Hence, for the instance $(H,s,t,c, \tau,\mu(\theta))$, its maximum latency cost of the Nash flow equals $\tau(H, c, \mu)$. Recall Lemma \ref{lem:monotone} that $\tau(G, c, \mu)\le\tau(H, c, \mu)$. Therefore, Braess’s paradox cannot occur. This completes the proof.
   \qed
\end{proof}

Together, Theorems~\ref{thm:noBP} and \ref{thm:sufficient}, along with the topological description provided in Lemma~\ref{lem:topology}, resolve Conjecture~\ref{conj:BP}, yielding a necessary and sufficient condition for Braess’s paradox in flow over time.

\begin{theorem}
 \purplecomment{For a non-decreasing piecewise-constant inflow,} a network does not admit Braess’s paradox for flow over time if and only if it is a chain of BPAs.
\end{theorem}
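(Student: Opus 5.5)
The plan is to assemble the two directions from results already in hand, with Lemma~\ref{lem:topology} as the bridge between the topological and the dynamic statements. Throughout, "network" means an irredundant single-source single-sink graph, as assumed in Section~\ref{sec:pre}. For the purposes of admitting BP, a constant inflow $\mu$ is a special case of a non-decreasing piecewise-constant inflow. Conversely, every non-decreasing piecewise-constant inflow has a maximum rate $\mu$, and this is the only parameter that enters Theorem~\ref{thm:noBP}.

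For the ``if'' direction, let $G$ be a chain of BPAs, let $\mu(\theta)$ be non-decreasing and piecewise-constant with maximum $\mu\le c(G)$, and let $H$ be any $s$-$t$ subgraph of $G$. If $c(H)<\mu$, then the inflow eventually exceeds every cut capacity of $H$. Queues then grow without bound, so $SC(H)=\infty$ and $H$ cannot be strictly better than $G$.

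If $c(H)\ge\mu$, we may first pass to the irredundant part of $H$. Deleting edges lying on no $s$-$t$ path changes neither the equilibrium labels nor the SPF quantities. Every block of this irredundant part is then a sub-BPA of a block of $G$: out-degree one is inherited, and series structure is preserved. Hence $H$ is again a chain of BPAs. Theorem~\ref{thm:noBP} applied to $G$ and to $H$ gives $SC(G)=\tau(G,c,\mu)$ and $SC(H)=\tau(H,c,\mu)$. Lemma~\ref{lem:monotone} then yields $\tau(G,c,\mu)\le\tau(H,c,\mu)$, so there is no BP. The case $\mu>c(G)$ was excluded in Section~\ref{sec:pre}, since then all social costs are infinite.

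For the ``only if'' direction, suppose $G$ is not a chain of BPAs. By Lemma~\ref{lem:topology}, $G$ contains one of $M_1,M_2,M_3$ as a topological minor. Theorem~\ref{thm:sufficient}(1) then produces capacities, transit times and a constant rate $\mu>0$ for which $G$ admits BP. A constant rate is a non-decreasing piecewise-constant inflow, so $G$ admits BP in the class under consideration.

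The main point to check is that the earlier results really cover the quantifier structure of the statement. Two things need care. First, subgraphs $H$ of a chain of BPAs, after removing redundant edges, must remain chains of BPAs with the same latencies; I would verify this by the block-by-block argument above. Second, the infinite-cost case $c(H)<\mu$ must be handled explicitly. Everything else is a direct citation.
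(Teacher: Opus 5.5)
Your proposal is correct and follows the paper's own route. The ``if'' direction is Theorem~\ref{thm:noBP}, whose proof already contains your subgraph argument via Lemma~\ref{lem:monotone}. The ``only if'' direction is Lemma~\ref{lem:topology} combined with Theorem~\ref{thm:sufficient}(1), with a constant inflow counting as a non-decreasing piecewise-constant one. Your treatment of the case $c(H)<\mu$ and of redundant edges in $H$ just spells out details the paper leaves implicit. Both that case and the paper's steady-state argument rely on the inflow eventually being sustained at its maximum rate.
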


\section{Monotonicity Conjecture}\label{sec:mono}
\redcomment{Here we keep total flow volume fixed and compare completion times, rather than the maximum latency under an indefinitely sustained inflow. The original conjecture below concerns constant inflow rates. Our extension uses nondecreasing, \purplecomment{piecewise-constant} rates on the injection intervals and the right-continuity convention from Section~\ref{sub:nashbp}.} \citet{Correa2021} propose a conjecture about the Nash flow, called \emph{monotonicity conjecture}, which basically states that, given an instance of the problem, the time it takes for an amount of flow to reach the sink $t$ is a decreasing function of the inflow rate $\mu$. 
They prove that if the monotonicity conjecture holds, the Price of Anarchy (PoA for short) for the flows over time,  measured as \redcomment{the worst-case ratio of the equilibrium makespan to the minimum time needed to route the same volume,} 
satisfies $\mathrm{PoA} \le \frac{e}{e-1}$.

\begin{conjecture}[Monotonicity Conjecture]
Consider a network $G$ and two fixed inflow rates $\mu_1 > \mu_2$, with corresponding dynamic equilibria. Let $T_1^{\mathrm{EQ}}$ and $T_2^{\mathrm{EQ}}$ denote the makespans for routing $M$ units of flow under these equilibria. Then $T_1^{\mathrm{EQ}} \le T_2^{\mathrm{EQ}}$.
\end{conjecture}

\citet{Correa2021} prove that this monotonicity conjecture holds for \emph{series concatenation of parallel-path networks}.  Note that the series concatenations of parallel-path networks are exactly the chains of parallel paths defined in Section \ref{conjecture}.

In this section, we extend the validity of the monotonicity conjecture to a broader class of network topologies and to non-uniform inflows. Unlike the proofs in \citep{Correa2021}, which rely primarily on analyzing the arrival time $l_t(M/\mu)$ under a uniform inflow rate $\mu$ at the source, we establish the monotonicity property through a more intuitive argument. In particular, we show that, for a fixed network instance, dominance of the inflow rate function implies dominance of the arrival-flow rate function. In the following, when mentioning an inflow rate $\mu(\theta)$, we always mean an inflow of total amount $M$, namely $\int_0^\infty \mu(\theta)d\theta=M$.  

Let $g_1(\theta)$ and $g_2(\theta)$ be two flow-rate functions with supports  $[0,T_1]$ and  $[0,T_2]$, respectively, satisfying \[\int_0^{T_1}g_1(\theta)d\theta=M \text{ and } \int_0^{T_2}g_2(\theta)d\theta=M.\] If $g_1(\theta)\ge g_2(\theta)$  for all $\theta\in[0,T_1]$, (then apparently $T_1\le T_2$ and) we say flow $g_1(\theta)$ \emph{dominates} flow $g_2(\theta)$.

Analogous to the previous section, we begin by analyzing the case in which $G$ is a single BPA.  Let $(c^{(i)},\tau^{(i)})_{i=1}^m$ be the capacity-diameter slicing  output by Algorithm~\ref{alg:flow} with the input $(G,s,t,c,\tau,c(G))$. Note that now $\sum_{i=1}^m c^{(i)}=c(G)$.  Let $\mu(\theta)$ be a monotone non-decreasing \purplecomment{piecewise-constant} function within support $[0,T^*]$ and $\int_0^{T^*}\mu(\theta)d\theta=M$. From Proposition \ref{char:BPAflow} we know, the travel latency function $(l_t(\theta)-\theta)$ is monotone non-decreasing with $\theta\in[0,T^*]$. Suppose that $l_t(T^*)-T^*\in [\tau^{(k)},\tau^{(k+1)})$ when the inflow terminates\footnote{If $k=m$, then we denote $\tau^{(k+1)}:=\infty$.}. With a slight abuse of notation, for $i=1,\ldots,k$, let $T^{i}$ and $\hat T^{i}$ denote the earliest and latest times, respectively, at which $l_t(T^i)-T^i=\tau^{(i)}$, and $\nu(\theta):= \sum_{e\in \delta^-(t)}f_e^{-}(\theta)$ denote the sink $t$'s arrival flow rate at time $\theta$. Then by Proposition \ref{char:BPAflow}, sink $t$'s arrival flow rate is a monotone non-decreasing \purplecomment{piecewise-constant} function, and its flow rate $\nu(l_t(\theta))$ at time $l_t(\theta)$ satisfies 
\begin{align*}
      \nu(l_t(\theta))=
        \begin{cases}
         \mu(\theta) &\text{ for } \theta\in [T^{i},  \hat T^{i}), \ \text{ for }i=1,\ldots,k\\
         \sum_{j=1}^ic^{(j)}
            &\text{ for }  \theta\in [\hat T^{i}, T^{i+1}), \text{ for }i=1,\ldots,(k-1)\\ 
             \sum_{j=1}^{k} c^{(j)} &\text{ for } \theta\in[\hat T^k,T^*).
        \end{cases}
    \end{align*}
Note that, the time $\hat T^i$ denote the only time that $\mu(\hat T^i)>\sum_{j=1}^{i} c^{(j)}$, and $l_t(\hat T^i)-\hat T^i=\tau^{(i)}$. If for some $i$ that $T^i=\hat T^i$, then the interval$ [T^{i}, \hat T^{i})$ is empty.  
As $l_t(\theta)$ is a strictly monotone increasing function of $\theta$, we can rewrite the arrival flow rate $\nu(\theta)$ at time $\theta$ as the following monotone non-decreasing \purplecomment{piecewise-constant} function:
\begin{align}\label{rewrite}
      \nu(\theta)=
        \begin{cases}
           \ 0 &\text{ for } \theta\in [0,\tau^{(1)}),\\
            \ \mu\big(\theta-\tau^{(i)}\big) &\text{ for } \theta\in [T^i+\tau^{(i)},\hat T^i+\tau^{(i)}), \ \text{ for } i=1,\ldots,k \\
            \sum_{j=1}^{i} c^{(j)}, \ &\text{ for }  \theta\in [\hat T^{i}+\tau^{(i)}, T^{i+1}+\tau^{(i+1)}), \text{ for } i=1,\ldots,(k-1)\\
             \sum_{j=1}^{k} c^{(j)} &\text{ for } \theta\in[\hat T^k+\tau^{(k)},l_t(T^*)).
        \end{cases}
    \end{align}

\begin{lemma}\label{lem:dominance}
    Given any network instance $(G,s,t,c,\tau)$ in which $G$ is a BPA, let $\mu_1(\theta)$ and  $\mu_2(\theta)$ be two monotone non-decreasing \purplecomment{piecewise-constant} inflow rate functions with supports  $[0,T_1^*]$ and $[0,T_2^*]$, and  let $\nu_1(\theta)$ and $\nu_2(\theta)$  be the corresponding arrival flow rates at the sink $t$. If $\mu_1(\theta)$ dominates $\mu_2(\theta)$, then $\nu_1(\theta)$ also dominates $\nu_2(\theta)$. 
\end{lemma}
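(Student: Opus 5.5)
We will work directly with the explicit representation \eqref{rewrite} of the arrival rate at $t$ furnished by Proposition~\ref{char:BPAflow}. The key observation is that this formula describes a single, instance-dependent ``queueing map'': the inflow is delayed by $\tau^{(i)}$ while the latency equals $\tau^{(i)}$, and it is capped at the slice capacity $C_i:=\sum_{j=1}^i c^{(j)}$ while the latency lies strictly inside $(\tau^{(i)},\tau^{(i+1)})$. Moreover, the slicing $(c^{(i)},\tau^{(i)})$ depends only on $(G,c,\tau)$ and not on the inflow, because we run Algorithm~\ref{alg:flow} with input $c(G)$. Since both $\mu_1$ and $\mu_2$ induce arrival rates through the same slicing, the plan is to compare them through their latency functions $L_k(\theta):=l^{(k)}_t(\theta)-\theta$, $k=1,2$.

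\emph{Step 1: latency dominance.} We first show that $L_1(\theta)\ge L_2(\theta)$ for all $\theta\in[0,T_1^*]$. By Proposition~\ref{char:BPAflow}, whenever $L_k(\theta)\in[\tau^{(i)},\tau^{(i+1)})$ we have $L_k'(\theta)=\max\{\mu_k(\theta)/C_i,1\}-1$. This right-hand side is a nondecreasing function of $\mu_k$, and it is nondecreasing in $L$ as well, since jumping to a higher slice only changes the level at which $L$ sits and the comparison is made at equal $L$. The natural tool is therefore a comparison argument. Let $\theta_0$ be the first time at which $L_1<L_2$. Just after $\theta_0$ the two latencies are equal, so they lie in the same slice, and $\mu_1\ge\mu_2$ gives $L_1'\ge L_2'$, which is a contradiction. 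The delicate point is the slice boundaries: at $L=\tau^{(i)}$ the latency may stall, growing only once $\mu>C_i$, and we must check that the right-continuous convention makes the comparison valid there. We will handle this by working piecewise on the finitely many intervals where both inflows and both slice indices are constant, so that each $L_k$ is affine on each such interval.

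\emph{Step 2: cumulative arrival comparison.} We avoid comparing $\nu_1$ and $\nu_2$ pointwise directly, because the time shifts $\tau^{(i)}$ differ. Instead we compare cumulative quantities. FIFO at $t$ together with conservation gives $N_k(l^{(k)}_t(\theta))=\int_0^\theta\mu_k$, where $N_k$ denotes the cumulative arrival. Since $\int_0^\theta\mu_1\ge\int_0^\theta\mu_2$, and Step 1 shows that $l_t^{(1)}$ may lag, the cumulative comparison alone is not decisive. This is exactly where we need the rate form of dominance.

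\emph{Step 3: rate dominance.} Up to the end of $\nu_1$, the arrival rate at time $\vartheta$ is $\nu_k(\vartheta)=\min\{\mu_k(\theta_k),C_{i_k}\}$, where $\theta_k=(l_t^{(k)})^{-1}(\vartheta)$ and $i_k$ is the current slice index, with $\nu_k=\mu_k(\theta_k)$ exactly at slice boundaries. By Step 1, $i_1\ge i_2$, so $C_{i_1}\ge C_{i_2}$, and both $\nu_k$ are nondecreasing. The main obstacle is the fact that $\theta_1\le\theta_2$, since particle 1 is slower. This means the comparison $\mu_1(\theta_1)$ versus $\mu_2(\theta_2)$ does not follow from monotonicity. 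We would first try to close this gap with a case split. If $\nu_2(\vartheta)=C_{i_2}$, we are done as soon as $\nu_1(\vartheta)\ge C_{i_2}$. This holds because a capped or boundary state at slice $i_1\ge i_2$ has rate at least $C_{i_1-1}$ or $\mu_1>C_{i_1-1}$, and the possibility $i_1=i_2$ with $\mu_1(\theta_1)<C_{i_2}$ forces $L_1$ to be stationary, which contradicts $L_1\ge L_2$ being reached. If instead $\nu_2(\vartheta)=\mu_2(\theta_2)<C_{i_2}$, then $L_2$ is stationary at $\tau^{(i_2)}$. In that case we argue that $L_1$ also either sits at the same level, in which case $\theta_1=\theta_2$, or is strictly higher, in which case $\nu_1\ge C_{i_1-1}\ge C_{i_2}>\nu_2$. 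We expect this boundary bookkeeping, together with verifying that $\nu_1$ ends no later than $\nu_2$, to be the crux of the proof.
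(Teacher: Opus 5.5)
Your proposal is correct and follows essentially the paper's route: your Step~1 latency dominance is equivalent to the paper's inductive ordering $T^i_{\mu_1}\le T^i_{\mu_2}$, $\hat T^i_{\mu_1}\le \hat T^i_{\mu_2}$, which the paper proves by integrating $l_t'-1$ between consecutive breakpoints rather than by a first-crossing comparison. Your Step~3 case split, using $L_1(\theta_1)=\vartheta-\theta_1\ge\vartheta-\theta_2=L_2(\theta_2)$, spells out the passage from these orderings to rate dominance via \eqref{rewrite} that the paper asserts in one line.

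There are two harmless slips. First, the slope $\max\{\mu/C_i,1\}-1$ is non-increasing, not nondecreasing, in $L$; your crossing argument compares at equal $L$ and never uses this. Second, in your uncapped case with $i_1=i_2$ and $L_1(\theta_1)>\tau^{(i_2)}$, the bound $C_{i_1-1}\ge C_{i_2}$ fails; but there $\mu_1(\theta_1)>C_{i_2}$, so $\nu_1(\vartheta)=C_{i_2}>\nu_2(\vartheta)$ directly.
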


\begin{proof}
Since we are dealing with two different inflows at the same time, to avoid confusion, we use $T^i_{\mu_1}$, $\hat T^i_{\mu_1}$ for  $i=1,2,\cdots,k_{\mu_1}$ and $T^i_{\mu_2}$, $\hat T^i_{\mu_2}$ for  $i=1,2,\cdots,k_{\mu_2}$  to denote the above defined times with respect to inflow functions $\mu_1(\theta)$ and $\mu_2(\theta)$, respectively. Let \purplecomment{$T_{\mu_1}^{EQ}:=(l_t(T_1^*))_{\mu_1(\theta)}$ and $T_{\mu_2}^{EQ}:=(l_t(T_2^*))_{\mu_2(\theta)}$}  
denote the last particle's arrival time at the sink with inflows $\mu_1(\theta)$ and $\mu_2(\theta)$, respectively.  Since $\mu_1(\theta)$  and $\mu_2(\theta)$ both are non-decreasing \purplecomment{piecewise-constant} functions, from equation (\ref{rewrite}) of the arrival flow we know, $\nu_1(\theta)$ and $\nu_2(\theta)$ both are monotone non-decreasing \purplecomment{piecewise-constant} functions with supports  $[0,T_{\mu_1}^{EQ}]$ and  $[0,T_{\mu_2}^{EQ}]$ correspondingly, and satisfy $\int_0^{T_{\mu_1}^{EQ}}\nu_1(\theta)d\theta=M$,  $\int_0^{T_{\mu_2}^{EQ}}\nu_2(\theta)d\theta=M$.

Following the definition of the dominance, to prove $\nu_1(\theta)$ dominates $\nu_2(\theta)$, we only need to prove $\nu_1(\theta)\ge \nu_2(\theta)$ for all $\theta\in[0,T_{\mu_1}^{EQ}]$.   Since $\nu_1(\theta)$ and $\nu_2(\theta)$ both are monotone non-decreasing \purplecomment{piecewise-constant}, and $\mu_1(\theta-\tau^{(i)})\ge \mu_2(\theta-\tau^{(i)})$ for every $\theta$, it \purplecomment{is} enough to prove $T^i_{\mu_1}\le T^i_{\mu_2}$ and  $\hat T^i_{\mu_1}\le \hat T^i_{\mu_2}$ for $i=1,\ldots,k_{\mu_1}$. We will prove this by induction analysis.

First   $T^1_{\mu_1}= T^1_{\mu_2}=0$. 
Recall that $\hat T^1_{\mu_1}$ and $\hat T^1_{\mu_2}$ denotes the first time $\theta$ that $\mu_1(\theta)>c^{(1)}$ and $\mu_2(\theta)>c^{(1)}$, respectively. Since $\mu_1(\theta)\ge \mu_2(\theta)$, thus $\hat T^1_{\mu_1}\le\hat T^1_{\mu_2}$. Suppose it holds that $T^j_{\mu_1}\le T^j_{\mu_2}$ and  $\hat T^j_{\mu_1}\le \hat T^j_{\mu_2}$ for $j=1,\ldots,i$. We prove the correctness for $i+1$. 

Suppose to the contrary that $T^{i+1}_{\mu_1}> T^{i+1}_{\mu_2}$. By the definition \purplecomment{of $\hat T^i$} and Proposition \ref{char:BPAflow}, we know
\purplecomment{$$\tau^{(i+1)}-\tau^{(i)}=\int_{\hat T^i_{\mu_2}}^{T^{i+1}_{\mu_2}}(l_t'(\theta)-1)_{\mu_2}d\theta=\int_{\hat T^i_{\mu_2}}^{T^{i+1}_{\mu_2}}\left(\frac{\mu_2(\theta)}{\sum_{j=1}^{i}c^{(j)}}-1\right)\,d\theta.$$ }
As $[\hat T^i_{\mu_1},T^{i+1}_{\mu_1}]\supset[\hat T^i_{\mu_1},T^{i+1}_{\mu_2}]$ , we have  $$\int_{\hat T^i_{\mu_1}}^{T^{i+1}_{\mu_2}}(l_t'(\theta)-1)_{\mu_1}d\theta=\int_{\hat T^i_{\mu_1}}^{T^{i+1}_{\mu_2}}\left(\frac{\mu_1(\theta)}{\sum_{j=1}^{i}c^{(j)}}-1\right)\,d\theta\ge \int_{\hat T^i_{\mu_2}}^{T^{i+1}_{\mu_2}}\left(\frac{\mu_2(\theta)}{\sum_{j=1}^{i}c^{(j)}}-1\right)\,d\theta= \tau^{(i+1)}-\tau^{(i)}.$$
The inequality is by the facts that $\hat T^i_{\mu_1}\le \hat T^i_{\mu_2}$ and $\mu_1(\theta)\ge \mu_2(\theta)$. This means in the instance with inflow rate $\mu_1(\theta)$, the travel latency  would reach $\tau^{(i+1)}$ at an earlier time $T^{i+1}_{\mu_2}$, contradicting the definition of $T^{i+1}_{\mu_1}$. So $T^{i+1}_{\mu_1}\le T^{i+1}_{\mu_2}$. 

Still recall that $\hat T^{i+1}_{\mu_x}$ is the earliest time  $\theta\ge T^{i+1}_{\mu_x}$ such that $\mu_x(\theta)>\sum_{j=1}^{i+1} c^{(j)}$ for $x=1,2$.  Since $\mu_1(\theta)$ dominates $\mu_2(\theta)$, we have $\hat T^{i+1}_{\mu_1}\leq \hat T^{i+1}_{\mu_2}$. This completes the proof.
\qed
\end{proof}

Having the above dominance result for a single BPA, then we can extend it to chains of BPAs using a similar argument to the proof of Theorem \ref{thm:noBP}.
\begin{theorem}
     Given any network instance $(G,s,t,c,\tau)$ with $G$ being a chain of BPAs.  Let $\mu_1(\theta)$ and  $\mu_2(\theta)$ be two monotone non-decreasing \purplecomment{piecewise-constant} inflow rate functions at the source $s$ with supports  $[0,T_1]$ and  $[0,T_2]$, and  let $\nu_1(\theta)$ and $\nu_2(\theta)$  be the corresponding arrival flow rates at the sink $t$.   If $\mu_1(\theta)$ dominates $\mu_2(\theta)$, then $\nu_1(\theta)$ also dominates $\nu_2(\theta)$.
\end{theorem}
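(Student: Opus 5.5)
The plan is to induct on the number $k$ of BPA blocks $B_1,\ldots,B_k$ in the series decomposition of $G$. The sink of $B_j$ coincides with the source of $B_{j+1}$. The key observation is that in a Nash flow on a chain of BPAs, the equilibrium restricted to $B_{j+1}$ is exactly a Nash flow of the single-block instance on $B_{j+1}$. Its inflow rate is the arrival-rate function at the sink of $B_j$, shifted in time by the label of the cut vertex. This follows from the label recursion: $l_w(\theta)$ for $w\in B_{j+1}$ depends on $\theta$ only through $l_{s_{j+1}}(\theta)$. Every particle must pass through the cut vertex $s_{j+1}$, and edge dynamics in $B_{j+1}$ depend only on the inflow entering $s_{j+1}$. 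So, after reparametrizing by physical time at $s_{j+1}$, the active-edge condition in the definition of Nash flow restricts to the single-block condition. The same decomposition was used implicitly in the proof of Theorem~\ref{thm:noBP}.

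The base case $k=1$ is exactly Lemma~\ref{lem:dominance}. For the inductive step, let $G'=B_1\cup\cdots\cup B_{k-1}$, with sink $s_k$. By the induction hypothesis, the arrival-rate functions $\nu'_1,\nu'_2$ at $s_k$ satisfy that $\nu'_1$ dominates $\nu'_2$. Both carry total volume $M$. To feed them into Lemma~\ref{lem:dominance} on $B_k$, I must check two things. First, each $\nu'_x$ is monotone non-decreasing and piecewise constant on its support. This follows from representation \eqref{rewrite} and Proposition~\ref{char:BPAflow}, applied inductively block by block, as in Theorem~\ref{thm:noBP}. Second, the supports have the right form. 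The functions $\nu'_x$ are supported on $[\tau_0,T^{EQ}_x]$ rather than $[0,\cdot]$, where $\tau_0=\tau(G')$ is the free-flow diameter of $G'$. This start time is the same for both inflows, because the first particle experiences no queue.

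I therefore shift time by $\tau_0$, which is common to both. Shifting preserves dominance, since the two functions are compared pointwise on $[\tau_0,T^{EQ}_1]$. It also preserves the Nash flow on $B_k$, which is time-translation invariant because the network is empty before the first arrival. Applying Lemma~\ref{lem:dominance} to the shifted inflows gives that $\nu_1$ dominates $\nu_2$ after shifting back.

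The main obstacle is justifying rigorously that the restriction to each block is a Nash flow with the prescribed inflow and satisfies the regularity needed by Lemma~\ref{lem:dominance}. In particular, the restricted flow must be right-continuous with piecewise-constant nondecreasing input having finitely many breakpoints locally. I would handle this by observing that the Nash condition is local to edges and uses only labels of tails. By uniqueness of labels \cite[Theorem~6]{Cominetti2015}, the labels in $B_k$ coincide with those of the single-block equilibrium driven by $\nu'_x$, composed with $l_{s_k}$. Finiteness of breakpoints propagates through \eqref{rewrite}, since each block adds at most finitely many new breakpoints (at most $m$ capacity levels).
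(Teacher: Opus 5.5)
Your proposal is correct and matches the paper's proof. The paper likewise decomposes $G$ into blocks $B_1,\ldots,B_m$ and applies Lemma~\ref{lem:dominance} block by block, feeding each block's arrival rate in as the next block's inflow. Your two additional points, that the equilibrium restricted to a block is the single-block Nash flow and that both upstream outputs share the common time shift $\tau_0$, fill in details the paper's short argument leaves implicit.
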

\begin{proof}
  Suppose that $G$ consists of a sequence of BPAs connected in series, denoted by $B_1, \ldots, B_m$. Let $s_i$ be the source of $B_i$. Then  for $i=1,\ldots,m-1$, the sink of $B_i$ is also the source of $B_{i+1}$. When we are given the inflow $\mu_1(\theta)$ at $s=s_1$,  for $i=1,\ldots,m$, we denote the arrival flow at $s_{i+1}$ by $\nu^i_1(\theta)$, which is also the inflow of the next block.  When we are given the inflow  $\mu_2(\theta)$ at $s=s_1$, for $i=1,\ldots,m$,  we denote the arrival flow at $s_{i+1}$ by $\nu^i_2(\theta)$. Since $B_1$ is a single BPA, according to the given conditions and by Lemma \ref{lem:dominance}, we can deduce that  $\nu_1^1(\theta)$ and $\nu_2^1(\theta)$ both are monotone non-decreasing \purplecomment{piecewise-constant} inflow rate functions and  $\nu_1^1(\theta)$ dominates $\nu_2^1(\theta)$. As each block  $B_i$ is also a single BPA, repeated application of the lemma yields that $\nu_1^i(\theta)$ dominates $\nu_2^i(\theta)$ for $i=2,\ldots,m$. Finally, noting that $\nu_1(\theta)=\nu_1^m(\theta)$ and $\nu_2(\theta)=\nu_2^m(\theta)$, we conclude that $\nu_1(\theta)$ dominates $\nu_2(\theta)$.
 \qed
\end{proof}
\begin{corollary}\label{thm:conj}
    The Monotonicity Conjecture holds for any instance $(G,s,t,c,\tau)$ in which $G$ is a chain of BPAs and the inflow rate function is monotone non-decreasing \purplecomment{piecewise constant}.
\end{corollary}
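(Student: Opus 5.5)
The plan is to reduce the corollary to the preceding theorem on chains of BPAs. Take two inflow functions $\mu_1,\mu_2$ on $G$, each monotone non-decreasing and piecewise constant on its injection interval, carrying the same volume $M$, with $\mu_1$ dominating $\mu_2$. The constant-inflow statement of the conjecture is the special case $\mu_x(\theta)=\mu_x$ on $[0,M/\mu_x]$. There, $\mu_1>\mu_2$ immediately gives $\mu_1(\theta)=\mu_1\ge\mu_2=\mu_2(\theta)$ on $[0,M/\mu_1]$, so $\mu_1$ dominates $\mu_2$ in the sense defined in this section.

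First, apply the theorem for chains of BPAs to conclude that the arrival rate $\nu_1$ at $t$ dominates $\nu_2$. Both arrival functions have total mass $M$, since every particle eventually reaches $t$: by the analysis of Section~\ref{sec:nash}, the latency is bounded by $\tau(G,c,\mu)$ whenever $\mu\le c(G)$, and this holds block by block. Let the supports be $[0,T^{EQ}_1]$ and $[0,T^{EQ}_2]$. The makespans are exactly these right endpoints $T^{EQ}_x$, i.e., the arrival time $l_t(T^*_x)$ of the last particle, using FIFO and the monotonicity of $l_t$.

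Second, derive $T^{EQ}_1\le T^{EQ}_2$ from dominance. Suppose instead that $T^{EQ}_1>T^{EQ}_2$. Dominance gives $\nu_1\ge\nu_2$ on $[0,T^{EQ}_1]\supseteq[0,T^{EQ}_2]$, so
\[
M=\int_0^{T^{EQ}_1}\nu_1\ \ge\ \int_0^{T^{EQ}_2}\nu_2+\int_{T^{EQ}_2}^{T^{EQ}_1}\nu_1 .
\]
Because $\nu_1$ is non-decreasing by \eqref{rewrite} and is positive just before $T^{EQ}_1$, it is positive on a set of positive measure in $(T^{EQ}_2,T^{EQ}_1)$. Hence the last integral is strictly positive, giving $M>M$, a contradiction. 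This is the "apparently $T_1\le T_2$" remark in the dominance definition, made precise.

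The main obstacle is a hypothesis check rather than the computation. The block-wise theorem requires each block's inflow to be non-decreasing piecewise constant with maximum rate not exceeding the block's capacity. I would verify that $\mu\le c(G)$ forces the arrival rate at each block's sink to stay at most $\mu$; Proposition~\ref{char:BPAflow} gives rates $\min\{\sum_j c^{(j)},\mu(\theta)\}$. I would also check that the support definition of dominance (comparison only up to the end of the dominating flow) propagates, since the theorem is stated in exactly that form. With those checks, the corollary follows directly.
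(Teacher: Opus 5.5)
Your proposal is correct and follows the paper's route: the paper states this corollary as an immediate consequence of the dominance theorem for chains of BPAs, and constant rates $\mu_1>\mu_2$ are the special case where $\mu_1$ dominates $\mu_2$ on $[0,M/\mu_1]$. Your integral argument makes rigorous the paper's parenthetical remark that dominance of two equal-volume rate functions forces $T_1\le T_2$, applied to the arrival rates $\nu_1,\nu_2$, whose right endpoints are the makespans.
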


\section{Conclusion}\label{sec:con}

This work develops a structural perspective on flow over time by examining  Nash flow's dynamic performance under a specific class of network topologies. The introduction of chain of BPAs graph identifies a class of networks that exhibits several desirable properties with respect to Nash flow. In particular, the structure of \purplecomment{chains} of BPAs prevents the emergence of Braess’s paradox, \redcomment{ensuring that edge removal cannot improve the maximum equilibrium latency.}. This completely resolves \citet{Macko2013}'s conjecture on the sufficient and necessary condition for the occurrence of Braess’s paradox. At the same time, we establish a dominance-preservation result for chains of BPAs, \redcomment{confirming the monotonicity conjecture of \citet{Correa2021} for chains of BPAs and extending the comparison to nondecreasing, piecewise-constant inflows on their injection intervals, under the stated equilibrium regularity. The conjecture on general networks remains open.} In addition, some of our network decomposition results for series–parallel networks may be of independent interest.

Several directions for future work remain. One promising avenue is to investigate Braess’s paradox under alternative social-cost objectives and characterize the network structures that excludes the paradox. In addition, extending the monotonicity property to more general classes of graphs, such as series-parallel graphs, beyond the chain of BPAs graph remains an important open problem. 

%
%
%
%
%

\bibliography{sample-bibliography}

\appendix




\medskip

\end{document}